\newcommand\setItemnumber[1]{\setcounter{enum\romannumeral\@enumdepth}{\numexpr#1-1\relax}}
\newcommand{\p}{\mathbb{P}}
\DeclareMathOperator{\Bir}{Bir}
\DeclareMathOperator{\Aut}{Aut}
\DeclareMathOperator{\Bim}{Bim}
\DeclareMathOperator{\Hom}{Hom}
\DeclareMathOperator{\PGL}{PGL}
\DeclareMathOperator{\GL}{GL}
\DeclareMathOperator{\Lie}{Lie}
\DeclareMathOperator{\id}{Id}
\DeclareMathOperator{\SL}{SL}
\DeclareMathOperator{\PSL}{PSL}
\DeclareMathOperator{\Mat}{Mat}
\DeclareMathOperator{\Pic}{Pic}
\DeclareMathOperator{\NS}{NS}
\DeclareMathOperator{\Num}{N}
\DeclareMathOperator{\Sing}{Sing}
\DeclareMathOperator{\Spec}{Spec}
\DeclareMathOperator{\pr}{pr}
\DeclareMathOperator{\Exc}{Exc}
\renewcommand{\Spec}{\mathrm{Spec}}
\DeclareMathOperator{\Ind}{Ind}
\DeclareMathOperator{\End}{End}
\DeclareMathOperator{\bl}{bl}
\DeclareMathOperator{\diag}{diag}
\DeclareMathOperator{\sm}{sm}
\DeclareMathOperator{\Sym}{Sym}
\DeclareMathOperator{\ord}{ord}
\DeclareMathOperator{\alg}{alg}
\DeclareMathOperator{\tr}{tr}
\DeclareMathOperator{\trd}{trd}
\DeclareMathOperator{\Prd}{Prd}
\DeclareMathOperator{\codim}{codim}
\newcommand{\dto}{\dashrightarrow}
\newcommand{\X}{\mathcal{X}}
\newcommand{\A}{\mathcal{A}}
\newcommand{\cB}{\mathcal{B}}
\newcommand{\Y}{\mathcal{Y}}
\renewcommand{\L}{\mathcal{L}}
\newcommand{\E}{\mathcal{E}}
\newcommand{\cO}{\mathcal{O}}
\newcommand{\N}{\mathcal{N}}
\DeclareMathOperator{\Siegel}{\mathfrak{H}}
\DeclareMathOperator{\Nrd}{Nrd}
\newcommand{\be}{\mathbb{e}}
\newcommand{\Hm}{\mathbb{H}}
\newcommand{\ZZ}{\mathbb{Z}}
\newcommand{\RR}{\mathbb{R}}
\newcommand{\CC}{\mathbb{C}}
\newcommand{\DD}{\mathbb{D}}
\newcommand{\NN}{\mathbb{N}}
\newcommand{\PP}{\mathbb{P}}
\newcommand{\QQ}{\mathbb{Q}}
\newcommand{\HH}{\mathfrak{H}}
\newcommand{\GGmr}{\mathbb{G}_m^r}
\newcommand{\GGm}{\mathbb{G}_m}
\newcommand{\M}{\mathfrak{M}}
\newcommand{\me}{\mathrm{e}}
\newcommand{\Lau}{\mathbb{C}(\!(t)\!)}
\newcommand{\Pow}{\mathbb{C}[\![t]\!]}
\newcommand{\xdashrightarrow}[2][]{\ext@arrow 0359\rightarrowfill@@{#1}{#2}}
\newtheorem{Theorem}{Theorem}
\newtheorem{Corollary}[Theorem]{Corollary}
\newtheorem{conjecture}[Theorem]{Conjecture}
\newtheorem{theorem}[equation]{Theorem}
\newtheorem{lemma}[equation]{Lemma}
\newtheorem{proposition}[equation]{Proposition}
\newtheorem{corollary}[equation]{Corollary}
\newtheorem*{remark*}{Remark}
\theoremstyle{definition}
\newtheorem{example}[equation]{Example}
\newtheorem{remark}[equation]{Remark}
\begin{document}

\title{Families of automorphisms on abelian varieties}

\author{Charles Favre}
\address{CMLS, \'Ecole polytechnique, CNRS, Institut Polytechnique de Paris, 91128 Palaiseau Cedex, France}
\email{charles.favre@polytechnique.edu}
\author{Alexandra Kuznetsova}
\address{Steklov Mathematical Institute of Russian Academy of Sciences, Moscow, Russia}
\email{sasha.kuznetsova.57@gmail.com}

\thanks{The work of Alexandra Kuznetsova was performed at the Steklov International Mathematical Center and supported by the Ministry of Science and Higher Education of the Russian Federation (agreement no. 075-15-2022-265) as well as by the Russian Science Foundation, grant 21-11-00153. Both authors extend their thanks to Diego Izquierdo and Antoine Ducros for discussions on simple central algebras and analytic GAGA}

\date{\today}

\begin{abstract}
We consider some algebraic aspects of the dynamics of an automorphism on a family of polarized abelian varieties parameterized by the complex
unit disk. When the action on the cohomology of the generic fiber has no cyclotomic factor, we prove that 
such a map can be made regular only if the family of abelian varieties does not degenerate. 
As a contrast, we  show that families of translations are always regularizable. We further describe the closure of the orbits
of such maps, inspired by results of Cantat and Amerik-Verbitsky. 
\end{abstract}

\maketitle

\tableofcontents

\maketitle

\section{Introduction}

\noindent {\bf Regularizable mappings}.
Let $f\colon X\dto X$ be any birational self-map of a smooth complex projective variety of dimension $N\ge1$. Denote by
$\Ind(f)$ its indeterminacy locus, and by $\E(f)$  its exceptional locus consisting of 
the union of all the contracted hypersurfaces by $f$.

When $\Ind(f)= \Ind(f^{-1})= \emptyset$, 
then $f$ is an (biregular) automorphism, and its action by pull-back on the cohomology of $X$ commutes with iteration.
Powerful complex analytic techniques based on pluripotential theory have been developed and  under suitable
assumptions on the spectral properties of $f^*$, they
allow us to understand the ergodic properties of the dynamical system induced by $f$, see~\cite{MR2629598}.
When $\E(f)=\E(f^{-1})= \emptyset$, then $f$ is a \emph{pseudo-automorphism}. Since $f$ induces an isomorphism in codimension $1$, its action on $H^2(X,\QQ)$ is still functorial, 
but its action on the full cohomology ring $H^*(X,\QQ)$ remains a mystery. Furthermore pluripotential techniques can only work when $\Ind(f)$ and  $\Ind(f^{-1})$ are dynamically unrelated, see~\cite{MR2140266,MR2129771,MR2752759,MR3330918}. 
This motivates the question to measure how far a pseudo-automorphism is from being an automorphism. 

We say that a birational self-map $f\colon X\dto X$ is \emph{regularizable} if there exists a birational map $\phi \colon Y \dto X$
from a projective variety $Y$ such that $f_Y:= \phi^{-1} \circ f \circ \phi$ is an automorphism\footnote{we insist on $Y$ to be projective hence proper
contrary to some other authors, see, e.g., \cite{MR4340723}.}.
The existence of a functorial resolution of singularities (see, e.g.,~\cite{MR2289519}) implies that one can always assume $Y$ to be smooth. We shall also see that 
$f$ is regularizable if and only if $f^n$ is regularizable for some $n\in \NN^*$ (Proposition~\ref{prop:iterate-regular}). 
Similarly, we say that $f$ is \emph{pseudo-regularizable} when $f_Y$ is a pseudo-automorphism on some projective model $Y$. 
Our principal aim is to explore when a pseudo-regularizable birational map may be regularizable. 

\smallskip

Various geometric assumptions on $X$ force the regularizability of its whole group of birational transformations. This happens for instance
when $X$ does not carry any rational curve (since the total image of any point in $\Ind(f)$ is covered by rational curves); 
when $X$ is of general type (by Kobayashi-Ochiai theorem); or if $X$ is a degree $n\ge 4$ smooth hypersurface of $\PP^n_\CC$
(by birational super-rigidity, see, e.g.,~\cite{cheltsov}).
We shall focus our attention on dynamical properties of $f$ that ensure or forbid regularizability. A first set of constraints arises by looking at the growth of degrees of the iterates of $f$.
These degrees are defined as follows. For any ample line bundle $L\to X$, set $\deg_L(f) := f^* c_1(L) \cdot c_1(L)^{N-1}$. Then the sequence
$\{ \deg_L(f^n)\}$ is sub-multiplicative up to a bounded constant by~\cite{MR2119243,MR4048444,MR4133708}, and we can therefore define
the first dynamical degree of $f$ by setting $\lambda_1(f) := \lim_n \deg_L(f^n)^{1/n}$. 
By Weil's regularization lemma (see, e.g.,~\cite[\S 1.1.1]{MR4340488} and the references therein) it follows that if $\deg_L(f^n)$ is bounded, then $f$ is regularizable. 

One can similarly define higher degrees and dynamical degrees $\deg_{L,j}(f) := f^* c_1(L)^j \cdot c_1(L)^{N-j}$ and
$\lambda_j(f) := \lim_n \deg_{L,j}(f^n)^{1/n}$ for any $j\in \{0, \cdots, N\}$.
It is a fact that for any birational map $\phi \colon Y \dto X$ as above, and for any ample line bundle $L_Y\to Y$, 
we have $\deg_{L,j}(f^n) \asymp \deg_{L_Y,j}(f_Y^n)$.
This observation leads to the following series of results. 
\begin{enumerate}
\item
If $f$ is regularizable and $\lambda_1(f)=1$, then $\deg_L(f^n) \asymp n^{2(k-1)}$ with $k \in \{ 1, \cdots, N\}$, by~\cite{MR4431123} (the case $N=3$ was previously treated in~\cite{MR4030548}). 
\item 
When  $N=2$ and $\lambda_1(f)=1$, then $\deg_L(f^n) \asymp n^{k}$ with $k\in \{0,1,2\}$; and $f$ is regularizable 
if and only if  $k\in \{0,2\}$, by~\cite{MR563788,diller_favre}. 
\item
If $f$ is regularizable, then $\lambda_1(f)$ is a unit of the ring of integers of some number field (since it is the spectral radius of
$f^*\colon H^2(X,\QQ)\to  H^2(X,\QQ)$ which fixes the lattice $H^2(X,\ZZ)$). 
In particular, for any birational map $f$ of $\PP^N_\CC$ of degree $\ge2$ with $N\ge2$, 
and for a generic $A\in \PGL(N+1,\CC)$, then $A\circ f$ is not regularizable by a theorem by Vigny~\cite{MR3330918} (see also~\cite{MR4340488}). 
Also maps for which $\lambda_1(f)$ is transcendental cannot be regularizable (examples of such maps are given in ~\cite{bell-diller-jonsson}). 
\item 
When $N=2$ and $\lambda_1(f)>1$, then all Galois conjugates of $\lambda_1(f)$ have (complex) norm $\le1$ by~\cite{diller_favre}.
Moreover, if $f$ is regularizable, then either  $\lambda_1(f)$ is a quadratic unit or a Salem number. Conversely, 
if  $\lambda_1(f)$  is a Salem number, then $f$ is regularizable by~\cite{MR3454379}. 
\item
When $N\ge3$ and $\lambda_1(f)^2>\lambda_2(f)$, then again all Galois conjugates of $\lambda_1(f)$ have (complex) norm $\le1$ by~\cite{MR3255693}. 
LoBianco~\cite{MR4030548} proved
that the modulus of the Galois conjugates of $\lambda_1:=\lambda_1(f)$ of a regularizable map
belongs to the set $\{\lambda_1, \lambda_2^{-1}, \lambda_1^{-1} \lambda_2, \lambda_1^{-1/2}, \lambda_2^{1/2}, \lambda_1^{1/2} \lambda_2^{-1/2}\}$
 when $N=3$.  No general results are known in higher dimension. 
\end{enumerate}

Let us now discuss  how to produce (non-regular) examples of pseudo-automorphisms. 
First if $X$ has torsion canonical bundle, then $\E(f)$ and $\E(f^{-1})$ are automatically empty, hence
$f$ is a pseudo-automorphism. More generally if $\pi\colon X\to B$ is a family of polarized manifolds with trivial 
canonical bundle over a $1$-dimensional base and $\pi \circ f = \pi$, then one can find a birational model
of $X$ for which the relative canonical bundle $K_{X/B}$ is trivial, see~\cite{relative-minimal-model}, hence again $f$ is pseudo-regularizable. 

In dimension $2$ (where the notion of pseudo-automorphism and automorphism coincide), many constructions
of regularizable birational maps of $\PP^2_\CC$ with $\lambda_1(f)>1$ have been described, 
see~\cite{MR2354205,MR2677899,MR2858166,MR2825269,MR2905001,MR2904576,MR3498924} (the list is not exhaustive). 
Generalizations of these constructions to higher dimension $N\ge3$ lead to pseudo-regularizable birational maps, see~\cite{MR3179687,MR3161509,MR3587459}. 
To the knowledge of the authors, there is only one example by Oguiso and Truong~\cite{MR3329200} of a regularizable birational map $f$ of $\PP^N_\CC$
with $\lambda_1(f)>1$  which admits a Zariski dense orbit (see also~\cite{MR2904995} for product examples having Siegel disks). 
Even though pseudo-automorphisms are expected to be non regularizable in general, it remains a very challenging task to actually prove it. 
Examples of birational maps of $\PP^3_\CC$ have been treated by Bedford, Cantat and Kim~\cite{MR3277202}. 
The second author~\cite{Regularization} has also proved that a generic element of a family of pseudo-automorphisms introduced by Blanc 
 is not regularizable.

In this paper, we shall discuss the regularizability of a given bimeromorphic map $f$ on a $1$-parameter family of polarized abelian varieties $\X = (X_t)_{t\in \DD}$.
Note that by our previous discussion, $f$ is automatically pseudo-regularizable.  
Although we do not solve our problem in full generality, we give a necessary and sufficient condition
when the action of $f$ on $H^1(X_t,\ZZ)$ has no cyclotomic factors (Theorem~\ref{Theorem: criterion for irreducible family of AV}). 
We also exhibit necessary conditions in the case $\lambda_1(f)=1$ (Theorem~\ref{Theorem: decomposition}). 
We then turn to families of translations, and prove two theorems concerning this class of maps. First we show that families of translations are always regularizable (Theorem~\ref{Theorem: translations are regularizable}).  Second, we describe the closure of the orbits of a generic point
(Theorem~\ref{thm:orbit-closure}), inspired by a recent work of Amerik and Verbistky~\cite{arXiv:2112.01951}. 

\medskip

\noindent {\bf Regularizability of bimeromorphic maps on families of abelian varieties}.
Before stating our main results, let us describe the set-up in more details. 
A  family of polarized abelian varieties of dimension $g$ over the complex unit disk is a proper holomorphic map 
 $\pi\colon \X \to \DD=\{|t|<1\}$  which is a submersion over $\DD^*=\{0<|t|<1\}$, such that $X_t= \pi^{-1}(t)$ is a smooth abelian variety of dimension $g$ for each $t\neq 0$. 
We shall also fix a relatively ample line bundle $\L \to \X$, and assume that $\X$ is smooth.

The family is \emph{smooth} when $\pi$ is a submersion over $\DD$. In that case,  $X_0$ is a smooth abelian variety.
A \emph{base change} of order $n$ for the family $\pi\colon \X \to \DD$ is a  family of polarized abelian varieties 
$\pi_n \colon \X_n \to \DD$ with a meromorphic map $\varphi\colon \X_n\dto \X$
making the following diagram commutative: 
\[\begin{tikzcd}[ampersand replacement=\&]
	\X_n \&  \X \\
	\DD \& \DD
	\arrow[from=1-1, to=1-2]
	\arrow[from=1-1, to=2-1]
	\arrow[from=1-2, to=2-2]
	\arrow[from=2-1, to=2-2]
	\arrow["{\varphi}", from=1-1, to=1-2]
	\arrow["{\pi}", from=1-2, to=2-2]
	\arrow["{\pi_n}"', from=1-1, to=2-1]
	\arrow["{t\to t^n}", from=2-1, to=2-2]
\end{tikzcd}\]
A  family of polarized abelian varieties $\pi'\colon \X' \to \DD$ is \emph{bimeromorphically equivalent} to  $\pi\colon \X \to \DD$
if there exists a bimeromorphic map $\varphi\colon \X' \dto \X$ which is a biregular isomorphism
over $\X^* = \pi^{-1}(\DD^*)$ and satisfies $\pi\circ \varphi = \pi'$. 

We shall say that a proper  family of polarized abelian varieties $\pi' \colon \X' \to \DD$ is \emph{not degenerating} if it admits a base change which is a smooth family. 
In other words,  the induced projective variety $X$ defined over the field $\Lau$ admits a smooth model over $\Spec\, \Pow$ (possibly after a finite extension of the base field). 
It is also equivalent to 
say that the canonical holomorphic map sending $t\in \DD^*$ to the 
isomorphism class\footnote{the space of such isomorphism classes is a quasi-projective variety, see~\cite[chapter 8]{Birkenhake-Lange_AV}.} of the polarized abelian variety $[X_t]$ extends holomorphically through $0$ after possibly precomposing with $t\mapsto t^n$.

Consider $f \colon \X \dto \X$ a bimeromorphic map satisfying $\pi \circ f = \pi$. Note that the indeterminacy locus of $f$ satisfies $\Ind(f) \subset X_0$, since an abelian variety
does not contain any rational curve. In this context, we say that $f$ is regularizable (resp. pseudo-regularizable) iff there exists a proper  family of polarized abelian varieties
 $\pi'\colon \X' \to \DD$ bimeromorphically equivalent to  $\pi\colon \X \to \DD$ through a
 bimeromorphic map $\varphi\colon \X' \dto \X$, such that $f_{\X'} := \varphi^{-1} \circ f \circ \varphi$ is a regular automorphism (resp. regular automorphism from 
 $X_1$ onto $X_2$ where $\X\setminus X_i$ are analytic subsets of codimension $\ge2$). 
 We insist here on the condition that $\X'$ should be proper. As we shall see below, any map is regularizable on its Néron model, but the central fiber 
 of the latter is in general non-compact. As above,  $f$ is always pseudo-regularizable. 
 
For each $t \neq 0$, the map $f$ induces a biregular automorphism $f_t \colon X_t \to X_t$, hence
a group isomorphism  $f_t^* \colon H^1(X_t, \ZZ) \to H^1(X_t, \ZZ)$. Since all fibers $X_t$ are smoothly diffeomorphic, any continuous path joining $t$ to $t'$
induces an isomorphism $H^1(X_t, \ZZ) \to H^1(X_{t'}, \ZZ)$ conjugating $f_t^*$ to  $f_{t'}^*$. 

\begin{Theorem}\label{Theorem: criterion for irreducible family of AV}
Let $\pi\colon \X \to \DD$ be a  family of polarized abelian varieties of dimension $g$, and let $f \colon \X \dto \X$ be any bimeromorphic map. 
\begin{enumerate}
\item
If the family is not degenerating, then $f$ is regularizable. 
\item  
Suppose $f$ is regularizable. If no root of unity is an eigenvalue of $f^*_t\colon H^1(X_t, \ZZ) \to  H^1(X_t, \ZZ)$, then 
the family is not degenerating.  
\end{enumerate}
\end{Theorem}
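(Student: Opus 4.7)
The plan is to prove the two implications separately, using the structure theory of degenerations of polarized abelian varieties.

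For part (1), suppose the family is not degenerating, and pick a smooth base change $\pi_n\colon \X_n \to \DD$ together with the meromorphic map $\varphi\colon \X_n \dto \X$ realizing it. Pulling back $f$ yields a bimeromorphic self-map $\tilde f := \varphi^{-1}\circ f\circ\varphi$ of $\X_n$ commuting with $\pi_n$ and equivariant for the Galois action of $\mu_n$. Since $\X_n$ is smooth and proper over $\DD$ with abelian fibres, it contains no rational curve: any such curve would project to a point of $\DD$ and hence lie in a fibre. By the standard principle that a bimeromorphic self-map of a smooth proper complex manifold without rational curves is biregular (the fibres of a resolution of the graph are rationally chain connected, so the contracted loci would produce rational curves in the target), the map $\tilde f$ is an automorphism of $\X_n$. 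Descending to $\X_n/\mu_n$ and taking an equivariant functorial resolution $\X'\to \X_n/\mu_n$ then produces a proper family of polarized abelian varieties bimeromorphically equivalent to $\X$ over $\DD$ on which $f$ acts as a regular automorphism.

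For part (2), I argue by contradiction, assuming that $f$ is regular on some proper model $\X'\to\DD$ bimeromorphically equivalent to $\X$ while the family degenerates. By Grothendieck's quasi-unipotence theorem, the monodromy $T$ on $H^1(X_t,\QQ)$ has a Jordan decomposition $T = T_s T_u$ with $T_s$ of finite order and $T_u$ unipotent, and the degeneration hypothesis is equivalent to $T_u\neq I$, i.e., to the nilpotent logarithm $N := \log T_u$ being non-zero. Since $f$ is globally defined over $\DD$, the action $f_t^*$ commutes with $T$ and hence with $N$, preserving the monodromy weight filtration $W_\bullet$ on $H^1(X_t,\QQ)$; the graded piece $\mathrm{gr}^W_0 H^1(X_t,\QQ)$ is then non-zero, of some rank $r\geq 1$.

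The crux of the argument, and the main obstacle, is to show that all eigenvalues of $f_t^*$ acting on $\mathrm{gr}^W_0$ are roots of unity. Via the Mumford--Faltings--Chai theory of polarized degenerations of abelian varieties, I would identify $\mathrm{gr}^W_0 H^1(X_t,\QQ)$ canonically and $f$-equivariantly with the rational character lattice $X^*(T_0)_\QQ$ of the maximal subtorus $T_0\cong \GGm^r$ inside the identity component of the central fibre of the Néron model of $X_\eta$. Mumford's construction then encodes the relative polarization $\L$ as a full-dimensional lattice polytope in $X^*(T_0)_\RR$ which $f$ must preserve, since it preserves $\L$ up to torsion in Picard. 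As the group of lattice automorphisms fixing such a polytope is finite, the induced action of $f$ on $X^*(T_0)$ has finite order and all its eigenvalues are roots of unity; but these are also eigenvalues of $f_t^*$ acting on $H^1(X_t,\QQ)$, contradicting the hypothesis. Making this identification and translating \emph{preserves the polarization} into \emph{preserves the polytope} compatibly with the regular model $\X'$ is the main technical point I expect to require careful work.
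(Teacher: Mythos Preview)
Your argument for part (1) is essentially the paper's: pass to a smooth base change, observe that a smooth proper family of abelian varieties contains no rational curves so any bimeromorphic self-map is regular, and descend. Fine.

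Part (2) has a genuine gap. You want to show that the action of $f$ on $\mathrm{gr}^W_0 \cong X^*(T_0)_\QQ$ has finite order by producing a lattice polytope it must preserve, and you propose to extract this polytope from the relative polarization $\L$. But $f$ is \emph{not} assumed to preserve $\L$, and in the cases of interest it cannot: any automorphism of an abelian variety with $\lambda_1(f)>1$ acts on the N\'eron--Severi group with spectral radius $\lambda_1(f)>1$, hence fixes no ample (or even nef and big) numerical class, so ``$f$ preserves $\L$ up to torsion in Picard'' is simply false. Without that, there is no polytope for $f$ to fix, and nothing forces the action on $X^*(T_0)$ to be finite. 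Indeed the paper's own analysis shows that under the hypothesis of (2) the induced matrix $u_T(f)\in\GL(r,\ZZ)$ has spectral radius $|\tau_1|>1$ (by Kronecker, since none of its eigenvalues is a root of unity and their product is $\pm 1$), so it is genuinely of infinite order; the contradiction must come from somewhere else.

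The paper's route is completely different and does not attempt to bound the action on the torus part directly. It compares \emph{dynamical degrees} on the generic and special fibres. If $f$ is regular on a proper model, then some irreducible component $E$ of the central fibre satisfies $\lambda_k(f|_E)=\lambda_k(f_t)$ for every $k$; pushing $E$ into the N\'eron model and using that its components are semi-abelian, one computes $\lambda_k$ there via the formula $\lambda_k = \max_{i+j=k}\prod_{m\le i}|\tau_m|\cdot\prod_{n\le j}|\alpha_n|^2$. On the abelian generic fibre one has instead $\lambda_k(f_t)=\prod_{j\le k}|\mu_j|^2$. The point is that torus eigenvalues contribute $|\tau|$ to the semi-abelian count but $|\tau|^2$ to the abelian count; choosing $k$ so that $|\tau_1|$ first enters the top product forces a strict inequality $\lambda_k(f_\N|_{E'})<\lambda_k(f_t)$ whenever $r'\ge 1$ and $|\tau_1|>1$. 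That is where regularity is actually used, via the degree-matching Propositions relating $\deg_k$ on $X_t$, on components of $X_0$, and on their images in the N\'eron model. Your monodromy/weight-filtration setup is correct and useful for bookkeeping, but you will need to replace the polytope step by this dynamical-degree comparison (or some equally quantitative substitute) to close the argument.
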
 

The first statement is easy to prove. The main content lies in the second statement. 
Observe that if $X_t$ is a simple abelian variety for some $t\neq 0$, then the condition appearing in (2) is equivalent to $\lambda_1(f_t) >1$. 
Note also that our theorem is local over the base hence implies the next result.
\begin{Corollary}
Suppose $\X$ is a projective variety endowed with a fibration over a projective curve $\pi\colon \X\to B$ whose general fiber $X_b= \pi^{-1}(b)$ is an abelian variety. 
Suppose $f\colon \X\dto \X$ is a birational map preserving $\pi$. 
If $f^*_b\colon H^1(X_b, \ZZ) \to  H^1(X_b, \ZZ)$ has no eigenvalue equal to a root of unity for a general $b$, then $f$ is regularizable iff 
there exists a commutative diagram
\[\begin{tikzcd}[ampersand replacement=\&]
	X'\& X \\
	B'\& B
	\arrow[dashrightarrow, from=1-1, to=1-2]	
	\arrow["{\varphi}", from=2-1, to=2-2]
	\arrow["{\pi}", from=1-2, to=2-2]
        \arrow["{\pi'}", from=1-1, to=2-1]
\end{tikzcd}\]  
where $\varphi' \colon B' \to B$ is a finite ramified cover, and $\pi'\colon X'\to B'$ is a smooth family.
\end{Corollary}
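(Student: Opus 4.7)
The plan is to deduce the corollary from Theorem~\ref{Theorem: criterion for irreducible family of AV} by localizing at each critical value of $\pi$ and then patching. Let $S \subset B$ denote the finite set of critical values of $\pi$. I first observe that $f$ is automatically regular on $\pi^{-1}(B \setminus S)$: otherwise a point $p \in \Ind(f) \cap X_b$ for some $b \notin S$ would have positive-dimensional total transform contained in $X_b$ (since $f$ preserves $\pi$), and this transform would be covered by rational curves, contradicting the fact that the smooth abelian variety $X_b$ contains none. Therefore any regularization only needs to modify $\X$ in analytic neighborhoods of the fibers over $S$.

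For the direction assuming the diagram, fix $b \in S$ and a small disk $\DD_b \subset B$ around $b$. Any preimage $b' \in B'$ of $b$ has a neighborhood on which $\varphi'$ is modeled in local coordinates by $t \mapsto t^e$ for some $e \geq 1$, and the smooth family $X' \to B'$ restricted to this neighborhood is a smooth model of $\X|_{\DD_b}$ after the base change $t \mapsto t^e$. Hence the local family $\X|_{\DD_b} \to \DD_b$ is non-degenerating, so Theorem~\ref{Theorem: criterion for irreducible family of AV}(1) yields a modification $Y_b \to \pi^{-1}(\DD_b)$, an isomorphism over $\DD_b \setminus \{b\}$, on which $f$ becomes regular. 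Since these finitely many local modifications are supported over disjoint disks and agree with $\pi^{-1}(B \setminus S)$ on the overlaps, gluing them produces a projective variety $Y$ over $B$ birational to $\X$ on which $f$ is regular.

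For the converse, assume $f$ is regularizable globally. Its restriction to each $\pi^{-1}(\DD_b)$ is then regularizable, and Theorem~\ref{Theorem: criterion for irreducible family of AV}(2) provides integers $n_b \geq 1$ such that $\X|_{\DD_b}$ becomes smooth after the base change $t \mapsto t^{n_b}$. I would then construct a finite ramified cover $\varphi' \colon B' \to B$ whose ramification index at every preimage of each $b \in S$ is divisible by $n_b$ and which is étale elsewhere; such a cover exists by the theory of ramified coverings of Riemann surfaces with prescribed local monodromy, possibly after introducing auxiliary unramified branch points to resolve any topological obstruction. The base change $\X \times_B B'$ is smooth over the étale locus of $\varphi'$; near each ramification point the local non-degenerating statement provides a smooth projective model birational to $\X \times_B B'$ over the local disk; these local smooth models automatically agree with $\X \times_B B'$ over the punctured disks and so glue into a global smooth projective family $\pi' \colon X' \to B'$ fitting into the required commutative diagram.

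The principal obstacle lies in the backward direction, namely producing the single global ramified cover with the prescribed local ramification data and verifying that the local smoothings patch compatibly with the étale-smooth part. Both steps are classical but need to be carried out with care, in particular when $B$ has low genus and the prescribed ramification forces the use of auxiliary branch points to realize the required monodromy.
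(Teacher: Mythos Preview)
Your approach—localize at the finitely many critical values, apply Theorem~\ref{Theorem: criterion for irreducible family of AV} over each disk, and reassemble—is exactly what the paper has in mind when it says the theorem is ``local over the base''; the paper gives no further argument, so your write-up is a faithful expansion of that one line.

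One point in the forward direction needs more care. You assert that gluing the local regularizations $Y_b$ with $\pi^{-1}(B\setminus S)$ yields a \emph{projective} variety. The glued space is proper over $B$ and bimeromorphic to $\X$, hence Moishezon, but projectivity is not automatic in relative dimension $\ge 2$. You can repair this by noting that each $Y_b$ carries a relatively ample line bundle agreeing with $\L$ over $\DD_b^*$, so these glue to a relatively ample line bundle on $Y\to B$, and tensoring with a large pullback from $B$ gives a global ample class. A cleaner alternative avoids gluing altogether: replace $\varphi'$ by its Galois closure $B''\to B$ with group $G$, pull back $X'$ to a smooth projective family $X''\to B''$, observe that the lift of $f$ to $X''$ is regular (abelian fibres contain no rational curves) and $G$-equivariant, and descend to the projective quotient $X''/G$, which is birational to $\X$. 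This is just the global version of the proof of Proposition~\ref{prop: if for thm for families over disc}.

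Your backward direction is correct, and the obstacle you flag is real but routine: a connected cover $B'\to B$ with ramification index divisible by $n_b$ over each $b\in S$ always exists once you allow one auxiliary branch point outside $S$ to absorb the relation in $\pi_1(B\setminus S)$. Once $B'$ is fixed you can also shortcut the patching of local smooth models: take $X'$ to be the global N\'eron model over $B'$; the local non-degeneration statements say exactly that this model is proper, hence an abelian scheme, and the relative polarization makes it projective.
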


Smooth non-isotrivial families of abelian varieties with automorphisms satisfying $\lambda_1(f) >1$ and parameterized by 
a projective curve exist for any $g\ge2$. This follows from the fact that Hilbert-Blumenthal varieties can be compactified by adding finitely 
many points, see \S\ref{sec:aut-no-cyclo} for details. 

\smallskip 

Let us explain our strategy for the proof of (2). We use the notion of Néron model, see, e.g.,~\cite{BLR_Neron_models}.
This is a (not necessarily proper) birational model $\N\to \DD$ that satisfies a suitable universal property so that 
$f$ induces a regular automorphism $f_\N\colon \N\to \N$, and the
 bimeromorphic map $\phi_\N \colon \X \dto \N$ is regular on the open subset $X^{\sm}\subset \X$ over 
 which $\pi$ is a local submersion. 
 By Grothendieck's semi-stable reduction theorem, after base change, the central fiber is a finite union
 of semi-abelian varieties, that is extension of an abelian variety by a multiplicative torus. 
 Further, the semi-stable reduction theorem (\cite{KKMS}), again after base change, 
 we may suppose that $\X\setminus X^{\sm}$ has codimension $\ge2$ in $\X$.
To simplify the argument assume that $g=2$  and $\lambda_1(f_t)>1$, and that $X_t$ is simple for some $t$. 
First we pick an irreducible component $E$ of $X_0$ that is $f$-invariant
 and satisfies $\lambda_1(f|_E) = \lambda_1(f_t)$ (Proposition~\ref{prop: component of X0 with maximal growth}). Then we look at its image $Z := \phi_\N(E)\subset \N$ inside the central fiber of the Néron model. 
When $\dim(Z) \le1$, we observe that $E$ is the exceptional divisor of the blow-up of a fixed point or a fixed curve in a neighborhood of which $f$ is a local isomorphism, 
 which implies  the dynamical degree of  $f_\N|_Z$ to be equal to $1< \lambda_1(f_t)$, a contradiction (see 
 Proposition~\ref{prop: dynamical degrees on the exceptional divisor}).  
It follows that $Z$ has dimension $2$, and is an extension of a multiplicative torus $\GGm^k$ by an abelian variety $A$
 of dimension $2-k$. When $k=0$, the family is not degenerating.  The proof is complete if 
 we prove that $\lambda_1(f_\N|_Z)<\lambda_1(f_t)$ when $k\in\{1,2\}$. 
 Roughly the reason is that given a matrix $M\in \SL(g,\ZZ)$ whose spectral radius is equal to $\rho>1$, then 
  the dynamical degree of the induced map on the product $E^g$ for any elliptic curve $E$
  is equal to $\rho^2$; whereas the dynamical degree of the map induced by $M$ on $\GGm^g$
  is equal to $\rho$ (Theorem~\ref{theorem: dynamical degrees on semi-abelian variety}).

The same line of arguments can be used in the case $\lambda_1(f_t)=1$ if one replaces the computation of dynamical degrees by 
the finer invariant given by the growth rate of the sequence of degrees. Recall an automorphism $f_t$ of an abelian variety $X_t$ with $\lambda_1(f_t) =1$ always satisfies
$\deg_1(f^n) \asymp n^{2k}$ for some $k\in \{0, \cdots, g-1\}$. 
When the degrees are bounded (i.e., $k=0$), then an iterate of $f_t$ is a translation. At the other side of the spectrum, 
when $\deg_1(f^n) \asymp n^{2(g-1)}$, then $X_t$ is isogenous to $E^g_t$ for some elliptic curve $E_t$. 
 
However, during the proof of Theorem~\ref{Theorem: criterion for irreducible family of AV} we used the product formula of Dinh-Nguyen~\cite[Theorem~1.1]{Dinh_Nguyen} that allows to compute exactly the dynamical 
degrees of a map preserving a fibration in terms of the dynamical degrees in the fiber and on the base. We were only able to extend this formula to 
 the degree growth in codimension $1$ with linear losses, so that we couldn't fully characterize regularizable mappings.

To state our next result, we define the following invariant. Let $r(\X)\in \{0, \cdots, g\}$ be the dimension of the maximal multiplicative torus of $\N_0$ when it is semi-abelian (this does not depend on the base change).
Observe that $r(\X)= 0$ if and only if $\X$ is not degenerating; and $r(\X) =g$ if and only if $\X$ has maximal degeneration (in the sense that all components of the central fiber
are multiplicative tori). 

\begin{Theorem}\label{Theorem: decomposition}
Let $\pi\colon \X \to \DD$ be a  family of polarized abelian varieties of dimension $g$, and let $f \colon \X \dto \X$ 
be any bimeromorphic map such that $\lambda_1(f_t)=1$. 
Write $\deg_1(f_t^n) \asymp n^{2k}$ with $k\in \{0, \cdots, g-1\}$. 

If $f$ is regularizable, then we have
\[2k \le \max \{ r(\X) , 2g - 2r(\X) -1\}~.\]
In particular, if $f$ is regularizable and $k=g-1\ge 2$, then $r(\X)=0$ and the family is not degenerating. 
\end{Theorem}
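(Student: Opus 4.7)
The plan is to mirror the strategy sketched for Theorem~\ref{Theorem: criterion for irreducible family of AV} above, replacing dynamical-degree arguments by polynomial growth-rate estimates, and to exploit the fact that an automorphism of the semi-abelian central fiber of the Néron model has prescribed polynomial degree growth on each of its abelian and toric factors.

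First I carry out a reduction step. After a finite base change $t \mapsto t^n$ and suitable birational modifications, obtained via Grothendieck's semi-stable reduction theorem combined with the toroidal resolution techniques of Kempf-Knudsen-Mumford-Saint-Donat, I may assume that $\X$ is smooth with $\X \setminus X^{\sm}$ of codimension at least two, and that the Néron model $\N \to \DD$ of the family has semi-abelian central fiber fitting in an extension
\[
1 \to T \to \N_0 \to A \to 1,
\]
with $T \simeq \GGm^{r}$ (where $r = r(\X)$, unchanged by the base change) and $A$ an abelian variety of dimension $g - r$. The assumed regularizability of $f$ gives a proper model $\X'$ on which $f$ becomes a biregular automorphism, and by the universal property of the Néron model, $f$ extends to a regular automorphism $f_\N$ of $\N$ which preserves the maximal torus $T \subset \N_0$, hence descends to automorphisms $f_T$ of $T$ and $f_A$ of $A$.

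I then apply the polynomial analog of Proposition~\ref{prop: component of X0 with maximal growth}, combining the inductive argument on the irreducible components of $X_0'$ with the degree-growth version of the Dinh-Nguyen product formula (with the linear additive losses mentioned in the paragraph preceding the statement). This produces an irreducible $f_{\X'}$-invariant component $E \subset X_0'$ satisfying $\deg_1(f_{\X'}^n|_E) \gtrsim n^{2k - O(1)}$. Let $Z \subset \N_0$ be the Zariski closure of the image of $E \cap (X')^{\sm}$ in the Néron model; it is $f_\N$-invariant. To bound $\deg_1(f_\N^n|_Z)$ I use the semi-abelian structure: on the torus $T$, $f_T$ corresponds to $M_T \in \GL(r, \ZZ)$ with unimodular eigenvalues (since $\lambda_1(f_t) = 1$) and largest Jordan block $m_T \le r$, so $\deg_1(f_T^n) \asymp n^{m_T - 1}$ on any $M_T$-equivariant smooth projective toric compactification of $T$; on the abelian factor, analysis of $f_A^*$ via the Hodge decomposition $H^1(A, \CC) = H^{1,0} \oplus H^{0,1}$ yields $\deg_1(f_A^n) \asymp n^{2(m_A - 1)}$ with $m_A \le g - r$. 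Applying the degree-growth product formula to the fibration $\N_0 \to A$ (again with a linear loss) gives
\[
\deg_1(f_\N^n|_{\N_0}) \;\lesssim\; n \cdot \max\bigl(n^{2(g-r-1)},\, n^{r-1}\bigr) \;=\; n^{\max(r,\, 2g - 2r - 1)}.
\]
Chaining $n^{2k} \asymp \deg_1(f_t^n) \lesssim \deg_1(f_{\X'}^n|_E) \cdot n^{O(1)} \lesssim \deg_1(f_\N^n|_Z) \cdot n^{O(1)} \lesssim n^{\max(r,\, 2g - 2r - 1)}$ and letting $n \to \infty$ gives the desired bound $2k \le \max(r,\, 2g - 2r - 1)$.

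The ``in particular'' statement is then a short arithmetic check: if $k = g - 1 \ge 2$, then $2k = 2g - 2$, and for every $r \ge 1$ one has $\max(r, 2g - 2r - 1) \le \max(r, 2g - 3) \le 2g - 3 < 2g - 2$ (using $g \ge 3$); this forces $r = 0$, i.e.~the family is non-degenerating. The main obstacle is the careful bookkeeping of the ``linear losses'' inherent in the polynomial-growth version of the Dinh-Nguyen product formula, ensuring that they combine to give exactly the clean bound $\max(r, 2g - 2r - 1)$ and nothing weaker; a secondary subtlety is the case $\dim Z < g$, analogous to the exceptional-divisor analysis in the proof of Theorem~\ref{Theorem: criterion for irreducible family of AV}, where one must check that the polynomial dynamics on an exceptional divisor does not exceed the bound derived above from the semi-abelian estimates.
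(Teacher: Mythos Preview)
Your overall architecture is the same as the paper's: reduce to a semi-abelian N\'eron model, find a component $E\subset X_0$ with $\deg_1(f^n|_E)\asymp n^{2k}$, push it to $Z\subset \N_0$, and bound $\deg_1(f_\N^n|_{E_0})$ via the toric/abelian split. However, your bookkeeping of the linear loss is misplaced, and this matters for getting the sharp bound rather than a weaker one.

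Concretely: the degree-growth formula on the semi-abelian variety $E_0$ (Theorem~\ref{theorem: dynamical degrees on semi-abelian variety} and Corollary~\ref{corollary: growth of first degree on semi-abelian variety}) is \emph{exact}, not a product formula with linear loss. One has
\[
\deg_1(f_\N^n|_{E_0})\;\asymp\;\max\{n^{j_T-1},\,n^{2(j_A-1)}\}\;\le\;\max\{n^{r-1},\,n^{2(g-r-1)}\},
\]
with no extra factor of $n$; this is proved by a direct K\"ahler-form computation on the compactification $\bar G$, not by Dinh--Nguyen. The single factor of $n$ that is genuinely lost enters earlier, in the passage from $E$ to $Z$ via~\eqref{eq:compare1} of Proposition~\ref{prop: dynamical degrees on the exceptional divisor}: from $\deg_1(f^n|_E)\le Cn\,\deg_1(f_\N^n|_Z)$ one gets $\deg_1(f_\N^n|_Z)\gtrsim n^{2k-1}$. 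Then Proposition~\ref{prop: dynamical degree on invariant subvariety} (no loss) gives $\deg_1(f_\N^n|_{E_0})\gtrsim n^{2k-1}$, and combining with the semi-abelian bound yields $2k-1\le\max\{r-1,\,2g-2r-2\}$, i.e.\ $2k\le\max\{r,\,2g-2r-1\}$. If instead you insert a linear loss at the semi-abelian step \emph{and} carry additional $n^{O(1)}$ factors elsewhere (as your chain suggests), the losses accumulate and you only get $2k\le \max\{r,\,2g-2r-1\}+O(1)$, which is not the statement. Your final paragraph flags this correctly as ``the main obstacle''; the resolution is simply that there is exactly one linear loss, and it lives in Proposition~\ref{prop: dynamical degrees on the exceptional divisor}, not in the semi-abelian computation.

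Your ``secondary subtlety'' about $\dim Z<g$ is likewise already absorbed into Propositions~\ref{prop: dynamical degrees on the exceptional divisor} and~\ref{prop: dynamical degree on invariant subvariety}: the former handles the drop from $E$ to $Z$ (the exceptional-divisor case), the latter the inclusion $Z\subset E_0$. No separate analysis is needed. Your arithmetic for the ``in particular'' clause is fine.
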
 

For abelian surfaces ($g=2$), the following table summarizes our results:
\begin{table}[ht]
\centering 
\begin{tabular}{|c|c|c|}
  \hline
 $k$ & $r(\X)$ & Is $f$ regularizable?
 \\
 \hline
 $0$ &   & yes (Theorem~\ref{Theorem: translations are regularizable})
 \\
$1$ & $0$ & yes 
\\ 
$1$ & $1$ & no (Theorem~\ref{Theorem: decomposition})
\\
$1$ & $2$ & ???
 \\  \hline
 \end{tabular}
\end{table}

Observe also that in any dimension, if the degeneration is maximal  
and $f$ is regularizable, then  our theorem says  $k \le \lfloor g/2 \rfloor$. 
We suspect that if $\lambda_1(f_t)=1$, $f$ is regularizable and $r(\X) \le 2k -1$,
then there exists an $f$-invariant subfamily $\Y\subset \X$ of polarized abelian varieties
of dimension $\le g- r(\X)$ which is not degenerating and such that $\deg_1(f^n|_\Y) \asymp \deg_1(f^n)$.
We propose the following more ambitious conjecture.
\begin{conjecture}
Let $f\colon \X\dto \X$ be any bimeromorphic map of 
a  family of polarized abelian varieties that fixes the $0$-section. 

Then $f$ is regularizable if and only if the following holds. 
Possibly after base change, there is a $f$-invariant splitting such that $\X = \Y \times \Y'$ 
where $\Y$ is a non-degenerating family of polarized automorphisms of abelian varieties; 
$\Y'$ is a  family of polarized abelian varieties, and 
$f|_{\Y'}$ is a family of automorphisms such that 
$f^*_t|_{\Y'}\colon H^1(Y'_t, \ZZ)\to H^1(Y'_t, \ZZ)$ has finite order. 
\end{conjecture}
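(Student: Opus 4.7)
The \emph{if} direction packages earlier results. Suppose, after base change, $\X = \Y \times_\DD \Y'$ and $f = f_\Y \times f_{\Y'}$ with the factors as in the statement. Theorem~\ref{Theorem: criterion for irreducible family of AV}(1) provides a projective model of $\Y$ on which $f_\Y$ is a regular automorphism. Since $f_{\Y'}$ fixes the zero-section and its action on $H^1(Y'_t,\ZZ)$ has finite order $N$, the iterate $(f_{\Y'})^N$ is the identity on each fibre, which is trivially regularizable. Forming the fibred product of the two regular models, applying a functorial resolution of singularities, and invoking Proposition~\ref{prop:iterate-regular} to pass from $f^N$ back to $f$ then regularizes the product map.

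The \emph{only if} direction is the substantive part. Fix $t\neq 0$ and factor the characteristic polynomial of $f^*_t$ acting on $H^1(X_t,\QQ)$ as $\chi = q\cdot p$, with $q\in \QQ[T]$ having no cyclotomic root and $p\in \QQ[T]$ a product of cyclotomic polynomials. The kernels $V_q := \ker q(f^*_t)$ and $V_p := \ker p(f^*_t)$ are rational $f^*_t$-stable sub-Hodge structures whose direct sum is $H^1(X_t,\QQ)$, and they are mutually orthogonal for the polarization since generalized eigenspaces of $f^*_t$ attached to eigenvalues $\lambda,\mu$ with $\lambda\mu \neq 1$ always pair to zero. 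Poincaré reducibility for $(X_t, c_1(\L_t))$ then yields an $f_t$-equivariant isogeny $X_t \sim A_t \times A'_t$ realizing $H^1(A_t,\QQ) = V_q$ and $H^1(A'_t,\QQ) = V_p$.

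To globalize, observe that the projectors onto $V_q$ and $V_p$ are polynomials in $f^*$ and hence flat endomorphisms of the polarized variation $R^1\pi_*\QQ$ over $\DD^*$; after a finite base change trivializing the monodromy of the finite-order action on $V_p$, the two sub-variations descend to $\DD^*$. Appell--Humbert theory then produces families $\Y^*, (\Y')^* \to \DD^*$ with an $f$-equivariant isogeny $\Y^* \times_{\DD^*} (\Y')^* \to \X^*$, and a further base change trivializes its finite flat kernel to give an isomorphism over $\DD^*$. To extend across $t=0$ as families of polarized abelian varieties, I would use that Néron models commute with products and that $f$ already extends regularly to $\N(\X)$. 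Theorem~\ref{Theorem: criterion for irreducible family of AV}(2) applied to $\Y$ (whose cohomological action has no cyclotomic eigenvalue, and which inherits regularizability by projecting a regular model of $\X$ along $\Y \times_\DD \Y' \to \Y$) then forces $\Y$ to be non-degenerating; the cyclotomic condition on $\Y'$ is built into the construction.

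The principal obstacle is the last globalization step. While the splitting is transparent over $\DD^*$, producing a genuine product decomposition $\X = \Y \times_\DD \Y'$ of \emph{projective} families across the degenerate fibre requires controlling the finite flat kernel of the isogeny near $0$, and matching chosen projective completions of $\Y$ and $\Y'$ with a projective model of $\X$ on which $f$ is regular. A related technical point is that inheriting regularizability on a factor of an isogeny decomposition is not automatic; reducing to an honest product likely requires an extra base change, which is harmless in view of Proposition~\ref{prop:iterate-regular} but has to be tracked in order to preserve the hypothesis that $f$ fixes the zero-section.
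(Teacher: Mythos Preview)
The statement is a \emph{conjecture}; the paper labels it as such and provides no proof, so there is nothing in the paper to compare your attempt against.

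Your \emph{if} direction is correct and simply packages Theorems~\ref{Theorem: criterion for irreducible family of AV}(1) and~\ref{Theorem: translations are regularizable} together with Proposition~\ref{prop: if for thm for families over disc}; the detour through $f^N$ and Proposition~\ref{prop:iterate-regular} is unnecessary once you invoke Theorem~\ref{Theorem: translations are regularizable} directly for the factor $\Y'$.

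For the \emph{only if} direction, the difficulties you flag at the end are secondary; the real problem is that you are constructing the wrong splitting. Your decomposition via the cyclotomic factorization of $\chi$ is exactly that of Theorem~\ref{thm:splitting} and Proposition~\ref{prop:splitting}: on $\Y'$ every eigenvalue of $f^*_t$ is a root of unity, and on $\Y$ none is. But the conjecture demands that $f^*_t|_{\Y'}$ have \emph{finite order}, and imposes no spectral condition on $\Y$ whatsoever. These are genuinely different decompositions. For instance, take a non-degenerating family $\X$ on which $f^*_t$ is unipotent with $\deg_1(f^n_t)\asymp n^{2k}$, $k\ge 1$: the conjecture is satisfied with $\Y=\X$ and $\Y'=0$, whereas your splitting gives $\Y=0$ and $\Y'=\X$, on which $f^*_t$ does \emph{not} have finite order. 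The substance of the conjecture is that whenever $f$ is regularizable, the entire degree-growing part of $f$ (unipotent as well as hyperbolic) can be confined to a non-degenerating subfamily. Theorem~\ref{Theorem: decomposition} is the paper's partial result in this direction, and the table and discussion preceding the conjecture show the question is open already for $g=2$, $k=1$, $r(\X)=2$. The cyclotomic splitting is insensitive to the degeneration data of $\X$ and so cannot, by itself, produce the factor $\Y$ the conjecture requires.
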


\noindent {\bf Dynamics of families of translations}.
A translation on an abelian variety $X$ is an automorphism $f\colon X \to X$ acting as the identity on $H^1(X,\ZZ)$. 
Note that this is the same as asking $f$ to belong to the connected component of the identity in $\Aut(X)$.
Let $f \colon \X \dto \X$ be a bimeromorphic map on a family of polarized abelian varieties. 
If $f_t$ is a translation for some $t$, then $f_t$ is a translation for all $t\neq0$. 
\begin{Theorem}\label{Theorem: translations are regularizable}
Let $\pi\colon \X \to \DD$ be a  family of polarized abelian varieties, and let $f \colon \X \dto \X$ be a bimeromorphic map. 
If $f^*_t\colon H^1(X_t, \ZZ) \to  H^1(X_t, \ZZ)$ has finite order, then $f$ is regularizable. In particular, 
any family of translations is regularizable. 
\end{Theorem}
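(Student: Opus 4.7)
The first step is a reduction: by hypothesis there exists $n \ge 1$ with $(f_t^*)^n = \id$ on $H^1(X_t,\ZZ)$ for every $t \ne 0$, hence $f^n_t$ is a translation of $X_t$. Since regularizability is invariant under iteration (Proposition~\ref{prop:iterate-regular}), I would replace $f$ by $f^n$ and assume $f$ itself is a family of translations. After a ramified base change $t \mapsto t^N$ and a birational modification of $\X$ over the base, I would further arrange that $\pi\colon \X \to \DD$ has semi-stable reduction and carries a holomorphic section $\epsilon\colon \DD \to \X$, used as the zero section on each generic fibre. Using $\epsilon$, the translation $f$ is encoded by a section $a\colon \DD^* \to \X^*$, $a(t) = f_t(\epsilon(t)) - \epsilon(t)$; since $\X$ is projective over the smooth curve $\DD$, it extends holomorphically to $\bar a\colon \DD \to \X$, and the question reduces to making the translation $\tau_{\bar a}\colon \X \dto \X$ biregular on some smooth proper birational model of $\X/\DD$.

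To produce such a model I would invoke the N\'eron model $\pi_\N\colon \N \to \DD$ of the generic fibre. By the universal property of $\N$, the section $\bar a$ lifts to a section $\tilde a\colon \DD \to \N$, and translation by $\tilde a$ is automatically a biregular automorphism of $\N$. The component group $\Phi = \pi_0(\N_0)$ is finite, so after replacing $f$ by $f^{|\Phi|}$ (once again using Proposition~\ref{prop:iterate-regular}) I may assume $\tilde a(0)$ lies in the identity component $\N^0_0$, so that $\tilde a$ is a section of the semi-abelian $\DD$-scheme $\N^0 \to \DD$.

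The remaining, and hardest, step is to produce a smooth projective $\DD$-scheme $\overline{\N^0}$ containing $\N^0$ as a dense open subscheme, bimeromorphic to $\X$ over $\DD^*$, on which the self-translation action of $\N^0$ extends to biregular automorphisms. This is furnished by the theory of relative toroidal compactifications of semi-abelian schemes, due to Mumford and developed by Faltings--Chai and K\"unnemann: choosing a smooth projective fan in the cocharacter lattice of the toric part $\GGm^r \subset \N^0_0$ yields an equivariant compactification, because the torus action on a toric variety (extending its own multiplication) lifts to a global action of $\N^0$ on $\overline{\N^0}$. Granting this, $\tau_{\tilde a}$ extends biregularly to $\overline{\N^0}$, giving the desired regularization. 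The \emph{main obstacle} in this line of argument is precisely this last step: one must import from the algebraic theory of degenerations of abelian varieties the construction of an equivariant smooth projective compactification of $\N^0$, bimeromorphic to the original analytic family $\X$ over $\DD^*$, and verify that translation by the prescribed section extends regularly — together with ensuring, when descending from the base-changed model back to the original parameter $t$, that the Galois action of the base change extends compatibly and does not spoil the regularity of $\tau_{\bar a}$.
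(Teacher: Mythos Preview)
Your strategy is correct and coincides with the paper's: reduce to a family of translations, pass to a semi-stable model after base change, and regularize on a Mumford-type equivariant compactification of the connected N\'eron model. The paper carries out the key step explicitly in the analytic category, following Nakamura: it writes $\X$ as $X(\Delta)/\Gamma$ for a $\Gamma$-invariant fan $\Delta$ in a relative toric variety, lifts the translation section to $\tilde\phi\colon\DD\to X(\Delta)$, and shows that some power $\tilde\phi^N$ factors as an element of $\Gamma$ times a holomorphic section $\DD\to\GGm^g$, both of which act biregularly on $X(\Delta)$. Your version black-boxes this as ``the action of $\N^0$ extends to $\overline{\N^0}$'' via Faltings--Chai or K\"unnemann, which is legitimate but imports algebraic machinery that needs a GAGA-type translation to the analytic setting over $\DD$; the paper's explicit construction avoids that. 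Two minor points: the paper also reduces to the principally polarized case (via an isogeny argument, Proposition~\ref{prop:regul-isogeny}) before invoking the Mumford--Nakamura model, which you should include; and your worry about descent along the base change is unnecessary, since Proposition~\ref{prop: if for thm for families over disc} already shows regularizability passes in both directions.
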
 
The proof we give relies on the description by Nakamura~\cite{Nakamura_Neron_models}
of a compactification of the Néron model in terms of toroidal data, following ideas developed by Mumford (see the appendix of~\cite{MR1083353}). 
In a nutshell, one represents $X_t$ as a quotient of $\GGm^g$ by a co-compact action of $ \ZZ^g$, and
one obtains the suitable compactification by building a suitable toric modification of $\GGm^g\times \DD$ 
along the central fiber so that the action of $\ZZ^g$ extends and remains co-compact (on each fiber including the central one). 
It is then not difficult to check that any family of translations extends to this model. 

\begin{remark}
We use in an essential way the fact that the base is one-dimensional. 
We suspect that there exist families of elliptic curves defined over a surface
that carry families of translations which are not regularizable.
\end{remark}

Our next result is inspired by a recent theorem of Amerik and Verbitsky~\cite[Theorem~3.12]{arXiv:2112.01951}, who generalized to hyperkähler manifolds a former result of 
Cantat~\cite[Proposition~2.2]{MR1854708} on the density of generic orbits of translations on families of elliptic curves covering a K$3$ surface. 

Let $f\colon \X\dto \X$ be a family of translations as in the previous theorem.  
Note that for any given point in $X_t$, the closure (for the euclidean topology) of the orbit of $f_t$ is a finite union of real tori
 of (real) dimension ranging from $0$ to $2g$, and that this dimension does not depend on the choice of point on $X_t$.
 When $t$ changes however, this dimension might jump.     
 
 \begin{Theorem}\label{thm:orbit-closure}
Let  $f \colon \X \dto \X$ be any family of translations on a polarized family of abelian varieties $\pi\colon \X \to \DD$. 
Then there exist families of polarized abelian varieties  $\Y \to \DD$, $\Y' \to \DD$; 
families of translations $f_\Y \colon \Y\to \Y$, and $f_{\Y'}\colon \Y'\to \Y'$; 
and a meromorphic map $\phi \colon \Y\times \Y' \dto \X$ such that 
\begin{enumerate}
\item
$\phi (f_{\Y,t}, f_{\Y',t}) = f_t \circ \phi$ for all $t\in\DD$;
\item 
$\phi_t\colon Y_t\times Y'_t\to X_t$ is an isogeny for all $t\neq0$;
\item 
for all $t$ in a dense set of full measure, the closure of the orbit of any point $p\in Y_t$
under $f_{\Y,t}$ is dense in $Y_t$ for the euclidean topology;
\item
there exists a sequence of families of translations of finite order  $g_n$ on $\Y'$ such that 
$g_n \to f_\Y$ locally uniformly on $\Y'$. 
\end{enumerate}
\end{Theorem}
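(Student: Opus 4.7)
The plan is to apply Theorem~\ref{Theorem: translations are regularizable} to reduce to the case where $f$ is a regular automorphism of $\X$ expressed as translation by a holomorphic section $a(t)=f_t(e(t))$ (with $e$ a fixed zero section), and then split $a$ via Poincar\'e reducibility of the generic fiber into a ``torsion-approximable'' piece defining $\Y'$ and a complement $\Y$ on which $a$ admits no persistent $\QQ$-linear relation, so that density of generic orbits follows from Kronecker--Weyl.

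After applying the regularization theorem and choosing a holomorphic zero section (possible after a base change), I work over the function field $K=\CC(\DD)$. Poincar\'e complete reducibility gives an isogeny $X_K\sim\prod_i B_i^{n_i}$ into isotypic products of simple abelian $K$-varieties. I partition the factors into two groups: those in which the projection of $a_K$ admits a sequence of torsion $K$-sections converging to it locally uniformly as holomorphic sections of $\X$ over $\DD$, and those in which it does not. The product of the first group extends to an abelian subfamily $\Y'\subset\X$ over $\DD$ using the properness of the moduli of polarized abelian varieties (after base change to trivialize monodromy on the torsion subschemes); take $\Y$ to be a Poincar\'e complement. The resulting meromorphic map $\phi\colon\Y\times\Y'\dto\X$ is equivariant with respect to translations, giving (1) and (2), while (4) holds by construction since the torsion approximants $g_n$ of $a_{\Y'}$ exist by the very definition of $\Y'$.

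Property (3) then follows from Kronecker's theorem applied fiberwise. Over a simply connected open $U\subset\DD^*$, choose a continuously varying real basis $\omega_1(t),\ldots,\omega_{2g'}(t)$ of $H_1(Y_t,\ZZ)$ and write the universal-cover lift $\tilde a_\Y(t)=\sum_i\alpha_i(t)\,\omega_i(t)$ with $\alpha_i(t)\in\RR/\ZZ$ real-analytic in $t$. The orbit $\ZZ\cdot a_\Y(t)$ is Euclidean dense in $Y_t$ if and only if $1,\alpha_1(t),\ldots,\alpha_{2g'}(t)$ are $\QQ$-linearly independent. For each non-zero integer vector $(n,m_1,\ldots,m_{2g'})$ the locus where $n+\sum m_i\alpha_i(t)=0$ is a real-analytic subset of $U$; if it were all of $U$, the corresponding integer-valued character would force $a_\Y(t)$ to lie persistently in a proper real subgroup of $Y_t$, and one can then exhibit torsion approximations on a non-trivial subfactor of $\Y$, contradicting the defining property of $\Y$. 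Hence each such locus is a proper real-analytic subset of $U$, of measure zero; the countable union over all non-zero $(n,m)$ still has measure zero in $\DD^*$, so (3) holds on its dense full-measure complement.

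The main obstacle lies in rigorously carrying out the partition of simple factors of $X_K$ in the construction and, above all, in making watertight the converse step used above: that a persistent $\QQ$-linear relation on the $\alpha_i$ really does force the existence of a torsion approximation within a non-trivial subfactor of $\Y$. This requires careful handling of the interplay between the Gauss--Manin connection, the variation of the Hodge filtration with $t$, and the real-analytic geometry of torsion points as $t$ varies; in particular, persistent relations coming from real (non-complex) characters of $Y_t$ demand one to reconcile an algebraic (Poincar\'e) decomposition with a transcendental (real-analytic) notion of flatness.
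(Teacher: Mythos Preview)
Your proposed splitting via the Poincar\'e decomposition into isotypic factors is too coarse and does not work in general. Consider $X_t = E_t \times E_t$ for a family of elliptic curves $E_t$, with translation by the diagonal section $a(t) = (\alpha(t),\alpha(t))$ where $\alpha(t)$ has dense orbit in $E_t$ for generic $t$. The orbit closure of $0$ is then the diagonal $\Delta_t \cong E_t$, a proper complex subtorus. Since $E_K^2$ is a single isotypic factor (when $E_K$ is simple over $K$), your partition is forced to put all of $X$ into either $\Y$ or $\Y'$. But neither choice works: if $\Y = E^2$ then orbits are not dense in $Y_t$ (only in the diagonal), violating (3); if $\Y' = E^2$ then $a$ cannot be approximated by torsion since its first coordinate $\alpha(t)$ has dense orbit, violating (4). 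The correct splitting here is diagonal versus anti-diagonal, which cuts \emph{through} the isotypic component and cannot be obtained by partitioning simple factors of any fixed Poincar\'e decomposition.

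The paper's approach avoids this by defining the splitting dynamically rather than algebraically: one takes $A(t)$ to be the maximal abelian subvariety contained in the real orbit closure $P(t) = \overline{\ZZ\cdot a(t)}$, shows that $A(t)$ varies holomorphically off a measure-zero set and extends to a subfamily $\A\subset\X$, and then takes $\cB$ to be its Poincar\'e complement. By construction orbits are dense in $A_t$ for generic $t$, while on $\cB$ the orbit closure becomes \emph{totally real} (contains no positive-dimensional complex subspace). The remaining step---that a translation with totally real orbit closure is uniformly approximable by finite-order translations---is the substantive technical point you flag as unresolved; the paper proves it by lifting to the Mumford--Nakamura toroidal model $X(\Delta)/\Gamma$ and exploiting that the real coordinates $\alpha_j$ of the lifted section in a lattice basis of the totally real torus are actually \emph{constant} in $t$ (since a holomorphic map into a totally real target is constant), which then allows rational approximation of these fixed real numbers to produce the torsion sections $g_n$.
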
 
In rough terms, up to isogeny, we can split $\X$ into a product family $\Y\times \Y'$ so that 
orbits are dense in $Y_t$ for $t$ generic, and the family of translations on $\Y'$ is uniformly approximated
by finite order automorphisms $g_n$. Note that for each $n$ and for a generic $t\in\DD^*$, the closure of the orbit of a point in $X_t$ under $(f_\Y,g_n)$ is
a finite union of translates of $Y_t$.

Let us state the following direct consequence of the previous result.
\begin{Corollary}
Suppose $f\colon \X\dto \X$ is a family of translations on a family of polarized abelian varieties $\pi\colon \X\to B$
where $B$ is a projective curve. If $X_t$ is simple for some $t$ and $\deg(f^n) \to \infty$, then
the orbit of any point in a generic fiber $X_t$ is dense for the euclidean topology. 
\end{Corollary}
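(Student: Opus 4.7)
The plan is to apply Theorem~\ref{thm:orbit-closure} locally over $B$ and combine it with the simplicity hypothesis, using degree growth to exclude the degenerate case of the resulting decomposition. Pick any $t_0 \in B$ and a coordinate disk $\DD \subset B$ around $t_0$. The theorem produces families $\Y, \Y' \to \DD$ of polarized abelian varieties, families of translations $f_\Y, f_{\Y'}$, and a meromorphic map $\phi\colon \Y \times \Y' \dto \X|_{\DD}$ that is a fibrewise isogeny over $\DD^*$ and conjugates $(f_{\Y}, f_{\Y'})$ to $f$.

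Next I would show that simplicity of some fiber propagates to very general fibers: the locus $\{t \in B : X_t \text{ is not simple}\}$ is a countable union of proper analytic subvarieties of $B$, indexed by the rational sub-Hodge-structures of $H^1(X_t, \QQ)$ that could correspond to proper sub-abelian varieties. Hence some $t_1 \in \DD$ gives $X_{t_1}$ simple. Simplicity of $X_{t_1}$ forbids proper non-trivial sub-abelian varieties, so each image $\phi_{t_1}(Y_{t_1} \times 0)$ and $\phi_{t_1}(0 \times Y'_{t_1})$ is either $0$ or all of $X_{t_1}$; combined with $\dim Y_t + \dim Y'_t = \dim X_t$ this forces either $\dim Y_t = 0$ or $\dim Y'_t = 0$ identically on $\DD$. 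In the favorable case $\dim Y'_t = 0$, the map $\phi$ becomes a fibrewise isogeny $\Y \dto \X|_{\DD}$ intertwining $f_\Y$ and $f$; part (3) of Theorem~\ref{thm:orbit-closure} gives a dense full-measure subset of $\DD$ on which every $f_{\Y,t}$-orbit is dense in $Y_t$, and surjective continuous maps preserve density, so every $f_t$-orbit is dense in $X_t$ for such $t$. Running this over a cover of $B$ by coordinate disks yields density of orbits on a dense subset of $B$.

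The main obstacle is to rule out the remaining case $\dim Y_t = 0$ using $\deg(f^n) \to \infty$. In that case $\phi$ descends to a meromorphic fibrewise isogeny $\Y' \dto \X|_{\DD}$ conjugating $f_{\Y'}$ to $f$, and by part (4) of Theorem~\ref{thm:orbit-closure} (read with the natural correction $g_n \to f_{\Y'}$) $f_{\Y'}$ is a locally uniform limit of finite-order families of translations $g_n$. Each $g_n$ acts trivially on $H^*$ of a suitable proper model of $\Y'$: translations act trivially on $H^*(Y'_t)$ fibrewise, and the Leray spectral sequence propagates this to the total space. Triviality of the integral cohomological action is preserved under locally uniform limits (the action factors through the homotopy class of the map), so $f_{\Y'}^* = \id$ on $H^*$ and $\deg(f_{\Y'}^n)$ is bounded. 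Transferring this bound across the meromorphic isogeny $\phi$ back to $\X$ should contradict $\deg(f^n) \to \infty$. The technical step I expect to wrestle with is this final degree transfer, since $\phi$ is only meromorphic and $\X$ is not assumed to coincide with the regular model produced by Theorem~\ref{Theorem: translations are regularizable}; the natural approach is to resolve $\phi$ via a common smooth blow-up and compare pullbacks of the polarization on $\X$ to a polarization on that resolution, using the triviality of $f_{\Y'}^*$ on its Néron-Severi lattice.
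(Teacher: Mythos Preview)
Your overall strategy is exactly the natural one and matches what the paper has in mind (the paper gives no detailed proof, only calls this a ``direct consequence'' of Theorem~\ref{thm:orbit-closure}): apply Theorem~\ref{thm:orbit-closure} locally, use simplicity of a generic fiber to force one factor of the splitting to be trivial, and invoke $\deg(f^n)\to\infty$ to exclude the case $\dim Y_t=0$. Your treatment of the case $\dim Y'_t=0$ is fine.

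The genuine gap is in the other case, and you have misdiagnosed it. The difficulty is not the meromorphic isogeny $\phi$ but the \emph{local-to-global} passage. Theorem~\ref{thm:orbit-closure}(4) (equivalently Proposition~\ref{prop:totally real in family}) produces the approximating finite-order $g_n$ only on a proper model over a coordinate disk $\DD\subset B$; that model is not compact, and whatever you learn about the action of $f_{\Y'}$ on its cohomology (which retracts onto the cohomology of one degenerate fiber) does not bound $\|(f^*)^n\|$ on $\Num^1$ of a \emph{global} projective model of $\X\to B$, which is what controls $\deg(f^n)$. Incidentally, your intermediate claim that each $g_n$ acts trivially on $H^*$ is also too strong: a finite-order translation can permute components of a singular fiber (think of a $2$-torsion section on an $I_2$ fiber), so $g_n^*$ has finite order but need not be the identity.

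To close the gap, globalize the approximation. The proof of Proposition~\ref{prop:totally real in family} shows that the Betti coordinates $(\alpha_j)$ of the translating section $\sigma$ are constant; since $\sigma$ is a global section, these constants are fixed by the monodromy representation modulo $\ZZ^{2g}$. The monodromy-fixed locus in $(\RR/\ZZ)^{2g}$ is a closed Lie subgroup in which torsion points are dense, so one may choose rational monodromy-fixed $\beta^{(n)}\to\alpha$, producing \emph{global} torsion sections and hence global finite-order $g_n\in\Bim(\X)$ converging to $f$. Working on a common global projective model (patch the Nakamura models of \S\ref{sec:naka-vs-mum} over $B$, on which the argument of \S\ref{subsection: proof of Theorem C} makes all these translations regular), discreteness of integral cohomology gives $f^*=g_n^*$ for $n\gg0$, so $f^*$ has finite order on $\Num^1$ and $\deg(f^n)$ is bounded, yielding the desired contradiction. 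The degree comparison across the isogeny that worried you is then routine (finite surjective maps preserve degree growth up to constants).
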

Any automorphism $f\colon \X\to \X$ on a projective surface with $\deg(f^n) \asymp n^2$ is a family of 
translations on a family of elliptic curves, by Gizatullin's theorem, see~\cite{MR563788,diller_favre,MR3161509,MR3480704}. 
The previous result says that the orbit of $f$ on a generic fiber is dense, extending
~\cite[Proposition~2.2]{MR1854708} to any projective surface.

\smallskip

Our strategy for the proof of Theorem~\ref{thm:orbit-closure} can be described as follows. 
The euclidean closure of the orbit of a point in the fiber $X_t$ under a translation is a real torus that contains a translate
of a subabelian variety of maximal dimension $A_t$. For $t$ generic, the dimension of $A_t$ remains the same
and $A_t$ forms a family of subabelian varieties. By Poincaré's irreducibility theorem, up to isogeny, we can split $\X$ as a product of
two families of abelian varieties. On one factor, the orbits are dense for the euclidean topology.  On the other factor, 
the closure of the orbits are totally real. Exploiting the fact that holomorphic maps taking values in totally real submanifolds
are necessarily constant, we show that in this situation translations can be approximated uniformly
over the base by translations of finite order. 

\medskip

 \noindent {\bf Further remarks}. 
 Various generalizations of Theorem~\ref{Theorem: criterion for irreducible family of AV} can be investigated. 
 First we may consider bimeromorphic maps on $\X$ such that the action on the base 
 has infinite order. Note that in that case, the family of abelian varieties needs to be isotrivial. We may also replace $\DD$ by a higher dimensional base $\DD^k$, $k\ge2$ and
 ask whether regularizability along any germ of analytic disk implies regularization over $\DD^k$. 
 
Finally we may consider algebraic versions of our results. Let $R$ be a discrete valuation ring, and denote by $K$ its fraction field. 
Let $A$ be an abelian variety over $K$, and  $f \colon A \to A$ be any automorphism. 
Observe that Néron models exist in this degree of generality by~\cite{BLR_Neron_models}. We may thus ask the following question. 
 Suppose that the action of $f$ on the first étale cohomology group of $A$ has no eigenvalue equal to a root of unity. 
Is it true that the Néron model of $A$ is proper if and only if there exists a proper model of $A$ over $\Spec R$ to which $f$ extends
as an automorphism?

\medskip

\noindent {\bf Plan of the paper}.
In \S\ref{sec:growth}, we briefly review the basic properties of dynamical degrees, and prove 
Propositions~\ref{prop: component of X0 with maximal growth},~\ref{prop: dynamical degrees on the exceptional divisor} and~\ref{prop: dynamical degree on invariant subvariety} which are the keys to the proof of Theorem~\ref{Theorem: criterion for irreducible family of AV}. We conclude this part by discussing general properties
of regularizable maps. 

We discuss in \S\ref{sec:semi-abelian} the geometry of semi-abelian varieties and generalize the computation of dynamical degrees of morphisms 
on toric varieties done in~\cite{MR3043585,MR3059849} to semi-abelian varieties. 

We briefly recapitulate in \S\ref{sec:neron} the notion of Néron model, 
and then give the proof of our characterizations of regularizable maps on families of abelian varieties
(Theorems~\ref{Theorem: criterion for irreducible family of AV} and~\ref{Theorem: decomposition}). 

Section~\ref{sec:examples-family} can be read independently from the rest of the paper. It aims at describing interesting examples of families
of automorphisms on polarized abelian varieties, and to classify (up to isogeny and base change) such families
in dimension $\le 5$. The material is standard here, and goes back to the work of Shimura~\cite{MR156001}. 

Finally, we devote Section~\ref{sec:translations} to families of translations of abelian varieties, proving Theorems~\ref{Theorem: translations are regularizable} and~\ref{thm:orbit-closure}.
The crucial ingredient is the construction (due to Mumford) of an adequate compactification of the Néron model, 
that we recall in \S\ref{sec:relative} following closely~\cite{Nakamura_Neron_models}.


\section{Growth of degrees of algebraic maps on quasi-projective varieties}\label{sec:growth}

In this section, we extend the notion of dynamical degrees to any rational self-map of a quasi-projective variety, and collect
a couple of results on the behaviour of dynamical degrees under restriction. 

Given any two sequences of positive real numbers $u_n, v_n$, we write $u_n \asymp v_n$ (resp. $u_n\lesssim v_n$) if and only if
there exists a constant $C>1$ such that $C^{-1} v_n \le u_n \le C v_n$ (resp.  $v_n \le C u_n$).

\subsection{Dynamical degrees} 
Let $X$ be a (possibly singular) quasi-projective complex variety of dimension $N$, and $f\colon X \dto X$ be
any dominant rational self-map. 

Let~$\overline{X}$ be any projective variety containing $X$ as a Zariski dense subset. 
Fix any ample line bundle $L\to \overline{X}$. Denote by $\bar{f}$ the unique dominant rational self-map of $\bar{X}$
whose restriction to $X$ equals $f$. This map is determined by its graph $\Gamma \subset \bar{X} \times \bar{X}$
which is an irreducible variety such that the first projection $\pi_1\colon \Gamma \to \bar{X}$ is a birational morphism, 
and the second projection $\pi_2\colon  \Gamma \to \bar{X}$ is surjective, and $f(x) = \pi_2(\pi_1^{-1}(x))$ for any $x$ off the 
image under $\pi_1$ of its exceptional locus.
For any $i\in\{0, \cdots, N\}$, we set 
\[ 
\deg_{L, i}(\bar{f}) =  \pi_2^*(c_1(L))^i\wedge\pi_1^*(c_1(L))^{N-i}\in \mathbb{N}^*.
\]
It was proven by Dinh and Sibony~\cite{Dinh_Sibony} (see also~\cite{MR4048444} for an argument working in arbitrary characteristic, and~\cite{MR4133708} for an estimate on the optimal constants), that given any birational map $\varphi\colon X_1 \dto \bar{X}$ between projective varieties, 
and for any ample line bundle $L_1 \to \bar{X}_1$
there exists a constant $C>1$ (depending only on $\varphi, L$ and $L_1$) such that
 \begin{equation}\label{eq:birin}
  C^{-1} \deg_{L_1, i}(f_1) 
\le \deg_{L, i}(\bar{f}) \le C \deg_{L_1, i}(f_1) 
\end{equation}
where $f_1 := \varphi^{-1}\circ \bar{f} \circ \varphi$. 
In particular, the growth rate of the sequence $\{\deg_{L, i}(\bar{f}^n)\}_{n\in \NN}$ depends
neither on the compactification of $X$, nor on the choice of $L$. 
To simplify notation, we shall thus write $\deg_i(f^n)$ instead of $\deg_{L, i}(\bar{f}^n)$. Beware that this sequence
is only defined up to bounded uniform constants. 

Dinh and Sibony further proved the existence of a positive constant $C>0$ such that
$C\deg_i(f^n)$  is submultiplicative.
We may thus define the $i$-th dynamical degree by setting: 
\[\lambda_i(f) := \lim_{n\to \infty} \deg_i(f^n)^{1/n} \in [1, +\infty[~.\] 
By~\eqref{eq:birin}, $\lambda_i(f)$ is invariant under birational conjugacy.
Note that when $f$ is a birational automorphism, then $\lambda_0(f)= \lambda_N(f) = 1$. 
In general, Khovanskii-Teissier's inequalities imply $\lambda_{i-1}(f) \lambda_{i+1}(f) \ge \lambda_i(f)^2$
so that the dynamical degrees are log-concave. It follows that  $\lambda_{i}(f)=1$
for all $i$ if and only if $\lambda_1(f) =1$ (in which case $f$ is necessarily birational). 

Suppose now that $X$ is smooth, and take $\bar{X}$  a smooth projective variety as above, together with an ample line bundle
$L\to \bar{X}$. We may endow $L$ with a positive hermitian form so that its curvature form $\omega$
is a Kähler form on $\bar{X}$. If $f$ is moreover regular, then we may compute the degrees as an integral: 
\[ 
\deg_{i}(f) = \int_{X} f^*(\omega^i)\wedge \omega^{N-i}~.
\]
One can also
compute the degrees directly by looking at the induced linear action on Dolbeault cohomology
$f^*_i \colon H^{i,i}(\bar{X}) \to H^{i,i}(\bar{X})$. For any given norm on $H^{i,i}(\bar{X})$, there exists a positive constant 
$C>1$ such that 
\begin{equation}\label{eq:spec}
C^{-1}\,  \| f^*_i\|\le \deg_{i}(f) \le C\,  \| f^*_i\|~.
\end{equation}
One can alternatively work on the real subspace $\Num^i_{\RR}(\bar{X})$
of $H^{i,i}(\bar{X})$ spanned by fundamental classes of subvarieties of codimension $i$.
Again, we have
$C^{-1}\,  \| f^*_i|\Num^i_{\RR}(\bar{X})\|\le \deg_{i}(f) \le C\,  \| f^*_i|\Num^i_{\RR}(\bar{X})\|$
for uniform bounded constant. 
When $f$ is a regular map, 
this implies $\lambda_i(f) = \rho(f_i^*) = \rho(f^*_i|\Num^i_{\RR}(\bar{X}))$
where $\rho(u)$ denotes the spectral radius of a linear map $u$. 

We refer to~\cite{MR4276288} for an interpretation of dynamical degrees in terms of spectral radii of bounded operators on suitable
Banach spaces.

\subsection{Degree growth and restrictions}
In this section, we prove three results that allow one to compare the dynamical degrees of an automorphism
with  the dynamical degrees of its restriction to a subvariety. These results play a key role in the proof of Theorem~\ref{Theorem: criterion for irreducible family of AV}.

\begin{proposition}\label{prop: component of X0 with maximal growth}
 Let $\pi\colon \X\to \DD$ be a flat projective morphism of relative dimension $N$, such that $X_t = \pi^{-1}(t)$ is connected for $t\neq0$. 
 Suppose that $f\colon \X \to \X$ is a biholomorphism, for which $\pi\circ f = \pi$ and
 $f(E)=E$ for any irreducible components $E$ of~$\X_0$. 
 
 Then for any $0\leqslant i \leqslant N$,
\[ \deg_i(f^n|_{X_t}) \asymp \max_E \deg_i(f^n|_E)~,
 \]
 where $E$ ranges over all irreducible components $E$ of $\X_0$.
 \end{proposition}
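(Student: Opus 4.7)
The plan is to evaluate both $\deg_i(f^n|_{X_t})$ and $\deg_i(f^n|_E)$ as intersection numbers against a fixed ample line bundle on a projective compactification of $\X$, and then use flatness of $\pi$ to compare them. Concretely, I fix a projective compactification $\X\subset\overline{\X}$ and an ample line bundle $L\to\overline{\X}$; then $L|_{X_t}$ is ample on $X_t$ for each $t$ and $L|_E$ is ample on every irreducible component $E$ of $X_0$, since each is a closed subscheme of $\overline{\X}$. Although $f$ itself need not extend to $\overline{\X}$, its pullback $(f^n)^*c_1(L)^i$ is a well-defined class on $\X$ that can be intersected with the compact cycles $[X_t]$ and $[E]$.

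By the integral formula for degrees and the birational invariance~\eqref{eq:birin}, for every $t\in\DD$ one has
\[
\deg_i(f^n|_{X_t})\;\asymp\; (f^n)^*c_1(L)^i\cdot c_1(L)^{N-i}\cdot[X_t],
\]
and analogously with $X_t$ replaced by $E$, using $f(E)=E$ and the projection formula to identify the restriction with the self-intersection intrinsic to $E$. Flatness of $\pi$ implies $[X_t]=[X_0]$ in $\mathrm{CH}_N(\X)$, and writing $X_0=\sum_E m_E E$ as a cycle therefore gives
\[
\deg_i(f^n|_{X_t})\;\asymp\;\sum_E m_E\deg_i(f^n|_E).
\]
Since the number of components and the multiplicities $m_E$ are positive constants independent of $n$, and all summands are nonnegative, the right-hand side is sandwiched between $(\min_E m_E)\max_E\deg_i(f^n|_E)$ and $(\sum_E m_E)\max_E\deg_i(f^n|_E)$; the asymptotic equivalence $\deg_i(f^n|_{X_t})\asymp\max_E\deg_i(f^n|_E)$ follows.

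The main obstacle I anticipate is a technical one: the components $E\subset X_0$ may well be singular or non-reduced, so one has to check that the intersection computation on $\X$ genuinely returns the intrinsic dynamical degree of $f|_E$ up to constants independent of $n$. This is handled by applying the birational invariance~\eqref{eq:birin} to a resolution (or any projective compactification) of $E$ equipped with the restriction of $L$, which remains ample there; since $f|_E$ is a biholomorphism of the quasi-projective variety $E$ left invariant by $f$, the two degree sequences agree up to bounded multiplicative constants, uniformly in $n$. A secondary point is justifying the identity $(f^n)^*c_1(L)|_E=(f^n|_E)^*c_1(L|_E)$ in the intersection, which is immediate because $f$ is a regular automorphism of $\X$ preserving $E$ setwise.
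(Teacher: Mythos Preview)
Your proof is correct and follows essentially the same approach as the paper: both fix a (relatively) ample line bundle, use the equivalence $[X_t]\sim\sum_E b_E[E]$ coming from flatness to obtain the identity $\deg_i(f^n|_{X_t})=\sum_E b_E\,\deg_i(f^n|_E)$, and then conclude. Your final sandwich argument is in fact slightly more direct than the paper's, which instead invokes the fact that for regular automorphisms each degree sequence grows like $n^{\delta}\lambda^n$ to deduce the $\asymp$ from the equality.
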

 \begin{remark}
 It is crucial for $f$ to be holomorphic on $\X$. Examples of a bimeromorphic map $f\colon \X \dto \X$
 such that $\pi\colon \X\to \DD$ is a smooth family of K$3$ surfaces and $\Ind(f) \subset X_0$
 such that  $\lambda_1(f_0)< \lambda_1(f_t)$ for all $t\neq0$ is given in~\cite{MR3748233}.
\end{remark}
\begin{proof}
Let $\L \to \X$ be any relatively ample line bundle. We have $\deg_i(f^n|_{X_t})= (f^{n*}c_1(\L))^i \cdot c_1(\L)^{N-i} \cdot [X_t]$. 
As $[X_t]$ is cohomologous to the divisor $\pi^{-1}(0) = \sum b_E [E]$, we get
\begin{equation}\label{eq:123}
 \deg_i(f^n|_{X_t}) = 
 \sum_E b_E \deg_i(f^n|_E)~.
 \end{equation}
Since $f\colon X_t \to X_t$ is regular, by~\eqref{eq:spec} there exists an integer $\delta$ such that 
$ \deg_i(f^n|_{X_t}) \asymp n^\delta \lambda_i(f|_{X_t})^n$. Similarly, for each irreducible component $E$ of $\X_0$, we may find an integer 
$\delta_E$ such that 
$\deg_i(f^n|_E)\asymp n^{\delta_E} \lambda_i(f|_E)^n$. The result follows from~\eqref{eq:123}.
\end{proof}

Suppose $f\colon X \dto Y$ is a birational map between smooth projective varieties.
Recall that $\Ind(f)$ denotes the indeterminacy locus of $f$, and is a subvariety of codimension at least $2$. 
The set of points $p\notin \Ind(f)$ at which $f$ is not a local biholomorphism is a divisor by the jacobian criterion
which coincides with the exceptional locus defined in the introduction and denoted by $\Exc(f)$.

 We define the proper transform of an irreducible subvariety $Z\subset X$ such that $Z\not\subset \Ind(f)$
 by setting $f^{\vee}(Z):= \overline{f(Z\setminus \Ind(f))}$. It is also an irreducible subvariety, 
 of the same dimension as $Z$ when $Z\not\subset \Exc(f)$.  

The total transform of a closed analytic subset $Z\subset X$ is defined as
$f(Z) := \pi_2 ( \pi_1^{-1}(Z))$ where $\pi_1\colon \Gamma \to X$ and 
$\pi_2\colon \Gamma \to Y$ are the canonical projections of the graph of $f$ onto $X$ and $Y$
respectively. In general $f(Z)$ strictly contains $Z$, and $p\in \Ind(f)$ if and only if $f(p)$
has positive dimension.

\begin{proposition}\label{prop: dynamical degrees on the exceptional divisor}
 Let $X, Y$ be smooth projective varieties and $\alpha\colon Y\dashrightarrow X$ be a birational map.
 Let $f\colon X\dto X$ be a birational self-map and $E\subset Y$ be an irreducible hypersurface such that:
 \begin{enumerate}
  \item[\textup{(1)}] $f_Y^\vee(E)  = E$, where $f_Y = \alpha^{-1}\circ f\circ\alpha\in \Bir(Y)$;
  \item[\textup{(2)}] and $Z = \alpha^\vee(E)$ is not included in $\Ind(f^{-1})$.
 \end{enumerate}
Then $Z$ is not included in $\Ind(f)$, $f^\vee(Z) = Z$, and   
  we have:
 \begin{equation}\label{eq:comput}
  \lambda_i(f_Y|_E) = \max_{\max\{0, i-c+1\}\leqslant j \leqslant \min\{i, N-c\}} \lambda_j(f|_Z)~,
 \end{equation}
 where $N= \dim(X)$ and $c= \codim(Z)$.
 
Moreover there exists a positive constant $C>1$ such that  
\begin{equation}\label{eq:compare1}
C^{-1} \deg_1(f^n|_Z)
\le
\deg_1(f_Y^n|_E)
\le
C n\times \deg_1(f^n|_Z)
\end{equation}
\end{proposition}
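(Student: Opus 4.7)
The plan is to reduce the proposition to a fibration statement by resolving $\alpha$ in a common smooth projective model $W$ equipped with birational morphisms $\pi_Y\colon W\to Y$ and $\pi_X\colon W\to X$ satisfying $\alpha\circ\pi_Y = \pi_X$. Let $\tilde E\subset W$ be the strict transform of $E$ and $\tilde f_Y := \pi_Y^{-1}\circ f_Y\circ \pi_Y\colon W\dto W$ the lift of $f_Y$. Since $f_Y^\vee(E) = E$ entails $(f_Y^{-1})^\vee(E) = E$, the divisor $\tilde E$ is $\tilde f_Y$-invariant, and the relation $\pi_X\circ \tilde f_Y = f\circ \pi_X$ holds as rational maps. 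Moreover $\pi_X|_{\tilde E}\colon \tilde E\to Z$ is a surjective morphism whose generic fibres have dimension $c-1$.

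\textbf{Step 1: proper transform statements.} To prove $Z\not\subset \Ind(f)$ and $f^\vee(Z) = Z$ I will exploit the hypothesis $Z\not\subset \Ind(f^{-1})$. For a generic $z\in Z$ the identity $\pi_X\circ \tilde f_Y^{-1} = f^{-1}\circ \pi_X$ forces $\tilde f_Y^{-1}$ to send the fibre $\tilde F_z := (\pi_X|_{\tilde E})^{-1}(z)$ into the single fibre $\tilde F_{f^{-1}(z)}$. Inverting this relation, for generic $w\in Z$ the map $\tilde f_Y$ sends $\tilde F_w$ into a single fibre, which shows that $f$ is regular at $w$ with $f(w)\in Z$; thus $Z\not\subset\Ind(f)$ and $f^\vee(Z) \subseteq Z$. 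The reverse inclusion follows by applying the same argument to $f^{-1}$.

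\textbf{Step 2: formula \eqref{eq:comput}.} By birational invariance $\lambda_i(f_Y|_E) = \lambda_i(\tilde f_Y|_{\tilde E})$, and Step 1 provides an equivariant fibration $\pi_X|_{\tilde E}\colon \tilde E\to Z$ with pair $(\tilde f_Y|_{\tilde E}, f|_Z)$. I will then apply the Dinh--Nguyen product formula~\cite{Dinh_Nguyen} to extract
\[\lambda_i(\tilde f_Y|_{\tilde E}) = \max_{\max(0,i-c+1)\le j\le \min(i,N-c)}\lambda_j(f|_Z)\cdot \lambda_{i-j}^{\mathrm{rel}}.\]
The fibre-to-fibre maps $\tilde f_Y\colon \tilde F_w \to \tilde F_{f(w)}$ are birational isomorphisms between the generically isomorphic rational $(c-1)$-folds that arise as the fibres of the resolution of $\alpha$, so a standard common-resolution argument along the $f|_Z$-orbit gives that all relative dynamical degrees $\lambda_k^{\mathrm{rel}}$ equal $1$, yielding \eqref{eq:comput}.

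\textbf{Step 3: the codimension-one estimate \eqref{eq:compare1}.} For the quantitative comparison I plan to decompose the intersection $\deg_1(f_Y^n|_E) \asymp (\tilde f_Y^n)^*\omega\cdot \omega^{N-2}\cdot[\tilde E]$ computed on $W$, splitting the restriction of an ample class $\omega$ to $\tilde E$ as a sum of a horizontal class pulled back from an ample class on $Z$ and a residual vertical class. The horizontal contribution is comparable to $\deg_1(f^n|_Z)$ and produces both the lower bound and the main term of the upper bound; the vertical contribution measures the fibre degree of $\tilde f_Y^n$, which grows at most linearly in $n$ by the bound developed in Step 2, giving the factor $n$ in the upper estimate. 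The main obstacle of the whole proof is concentrated in Step 2, namely the vanishing of the relative dynamical degrees: one must rigorously control the degrees of the iterated fibre-to-fibre birational maps. A secondary difficulty is to upgrade Dinh--Nguyen's asymptotic formula into the non-asymptotic inequality required for Step 3.
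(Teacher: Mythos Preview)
Your overall architecture matches the paper's: reduce to a fibration over $Z$, apply the Dinh--Nguyen product formula, and show that the relative dynamical degrees are all $1$. The gap is in how you justify this last point.

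In Step~2 you note that the fibre-to-fibre maps $\tilde f_Y\colon \tilde F_w\dto \tilde F_{f(w)}$ are birational between rational $(c-1)$-folds and invoke ``a standard common-resolution argument along the $f|_Z$-orbit'' to conclude $\lambda_k^{\mathrm{rel}}=1$. Birationality alone is not enough: a quadratic Cremona map of $\PP^2$ is a birational self-map of a rational surface with $\lambda_1=2$, so nothing in your stated argument prevents exponential growth of the iterated fibre degrees. What is missing is the geometric input the paper supplies. Rather than work on an arbitrary resolution $W$, the paper factors $\alpha$ through the blow-up $\bl_Z\colon Y'\to X$ of $Z$ (after arranging $Z$ smooth and the ideal sheaf principally lifted), so that the exceptional divisor $E_Z=\PP(N_{Z/X})$ is an honest $\PP^{c-1}$-bundle over $Z$. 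The decisive observation is then that $f_Y^\vee(E)=E$ forces $f_Y$ to be a local biholomorphism at a generic point of $E$, hence $f$ is a local biholomorphism at a generic point $p\in Z$. The induced map on the fibre $\bl_Z^{-1}(p)\cong\PP^{c-1}$ is therefore the projectivisation of $df_p$ acting on the normal space --- a \emph{linear} isomorphism --- and the relative degree of every iterate is exactly $1$. No orbit-wise resolution bookkeeping is required once you are on the right model.

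This also affects Step~3. You attribute the factor $n$ in the upper bound of \eqref{eq:compare1} to linear growth of the fibre degree, but in the blow-up model that degree is identically $1$. The factor $n$ has a different source: using $\Num^1_\RR(E_Z)=\bl_Z^*\Num^1_\RR(Z)\oplus\RR\xi$ with $\xi$ the tautological class, one writes $f_{Y'}^*\xi=\xi+\bl_Z^*\omega_\star$ for some $\omega_\star\in\Num^1_\RR(Z)$, and iterating gives $f_{Y'}^{n*}\xi=\xi+\sum_{j=0}^{n-1}\bl_Z^*(f_Y^*)^j\omega_\star$. It is this sum of $n$ horizontal correction terms, each controlled by $\deg_1(f^n|_Z)$, that produces the $n$. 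On your model $\tilde E$ there is no such clean rank-one complement to the pulled-back classes, which is another reason the paper passes to the blow-up rather than a general resolution.
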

Observe that (1) requires in particular that $E$ is not a component of $\Exc(f_Y)$. Note also that 
the result follows from the invariance of dynamical degrees under birational conjugacy when $E$ is not included in $\Exc(\alpha)$.

\begin{example}
Consider the map  $f(x,y,z) = ( x,zy, z)$ in $\CC^3$. It defines a birational self-map of $Y:= (\PP^1_{\CC})^3$
which fixes pointwise  the rational curve $Z=\{x=y=0\}$ so that $\deg_1(f^n|_Z)=1$ for all $n$.

Let $\mu \colon X \to Y$ be the blow-up along $Z$. 
Then $E= \mu^{-1}(Z)$ is isomorphic to $ (\PP^1_{\CC})^2$, and in the coordinates $x,z$
the lift of $f$ to $E$ can be written as follows $f(x,z) = (x/z,z)$ so that $\deg_1(f^n|E)\asymp n$.
\end{example}

\begin{proof}
We prove first that $Z\not\subset \Ind(f)$. Since $f_Y^\vee(E)  = E$ is a divisor, we may find a point $p\in E\setminus \Ind(\alpha) \cup \Ind(f_Y)\cup \Exc(f_Y)$
such that $f_Y(p) \notin  \Ind(\alpha) \cup \Ind(f_Y)\cup \Exc(f_Y)$. The total image of $\alpha(p)$ under $f$ is included
in $ \alpha ( f_Y(p))$ which has dimension $0$. Hence $\alpha(p)\notin \Ind(f)$, and  $f(\alpha(p))=\alpha ( f_Y(p))$. 
It follows that $f^\vee(Z)\subset Z$. Since $E$ is irreducible, $Z$ is irreducible too, and $f_Y^\vee(E)  = E$ implies $f^\vee(Z)= Z$.

Next we prove~\eqref{eq:comput}. Recall that dynamical degrees are invariant under birational conjugacy. By Hironaka resolution of singularities, we may thus suppose that $Z$ is smooth, $\alpha$ is a regular map for which the sheaf of ideals defining $Z$ lifts to a locally principal ideal sheaf. 
By the universal property of blow-ups, we may thus write $\alpha= \bl_Z\circ \beta$, where $\bl_Z\colon Y' \to X$ is the blow-up
of $Z$ with exceptional locus $E_Z$, and $\beta\colon Y \to Y'$ is a birational map such that $\beta(\Exc_\beta)$ does not contain $E_Z$.
Note that $\lambda_i(f|E)= \lambda_i(f_{Y'}|E_Z)$ where $f_{Y'}\colon Y'\to Y'$ is the lift of $f$ to $Y'$.

We abuse notation and write $\bl_Z \colon E_Z  \to Z$  for the restriction of $\bl_Z$ to $E_Z$.
The morphism $\bl_Z \colon E_Z  \to Z$ is then the projectivization of the normal bundle $N_{Z/Y}\to Z$. 
The relative dimension of $\bl_Z$ is equal to 
$c-1$, and the dynamical degrees can be computed
using the product formula of Dinh-Nguyen~\cite[Theorem~1.1]{Dinh_Nguyen}: 
\[
\lambda_i(f_{Y'}|_{E_Z}) = \max_{\max\{0, i- c+1\}\leqslant j \leqslant \min\{i, N-c\}} \lambda_j(f|_{Z}) \lambda_{i-j}(f_{Y'}|\bl_Z)~,
\]
where 
\[\lambda_{j}(f_{Y'}|\bl_Z)= \lim_{n\to \infty} \left(f_{Y'}^{n*}c_1(L)^j \cdot c_1(L)^{N-c+1-j}\cdot [\bl_Z^{-1}(t)]\right)^{1/n}~,\]
for a generic $t\in Z$ (here we abuse notation and choose an arbitrary ample line bundle $L\to Y$). The existence of the limit is justified in op. cit. (see also~\cite{MR4133708}).

We claim that $\lambda_{j}(f_{Y'}|\bl_Z)=1$ for all $j$. Grant this claim. Then, we have 
\[
\lambda_i(f_{Y'}|_{E_Z}) = \max_{\max\{0,  i- c+1\}\leqslant j \leqslant \min\{i, N-c\}} \lambda_j(f|_{Z})~,
\]
and the formula~\eqref{eq:comput} follows.

It remains to prove the claim. We argue using complex analytic arguments. Note that since $f_Y^\vee(E)=E$, $f_Y$ is a local biholomorphism
at a generic point in $E$. It follows that $f$ is also a local biholomorphism at a generic point
$p\in Z$. The map $f_{Y'} \colon E_Z \dto E_Z$ hence
maps the fiber $\bl_Z^{-1}(p)\simeq\PP^{c-1}_{\CC}$ to $\bl_Z^{-1}(f_Y(p))$
and has degree $1$. This implies
\[
f_{Y'}^{n*}c_1(L)^j \cdot c_1(L)^{N-c+1-j}\cdot [\bl_Z^{-1}(t)] = c_1(L)^{N-c+1}\cdot [\bl_Z^{-1}(t)]
\]
concluding the proof that $\lambda_{j}(f_{Y'}|\beta)=1$.
 
 \smallskip
 
 We finally prove~\eqref{eq:compare1}. To that end, it suffices to show   
 \[C^{-1} \deg_1(f_Y^n|_{Z})
\le
\deg_1(f_{Y'}^n|_{E_Z})
\le
C n\times\deg_1(f_Y^n|_{Z})\]
for some constant $C>1$.
 Observe that the Néron-Severi space $\Num^1_{\RR}(E_Z)$ equals the direct
 sum $\bl_Z^* \Num^1_{\RR}(Z) \oplus \RR \xi$ where $\xi$ is the first Chern class of the tautological line bundle (whose restriction to each fiber
 of $\bl_Z$ is $O(1)$).  
 Note that we have the following commutative diagram: 
 \[\begin{tikzcd}[ampersand replacement=\&]
	E_Z\& E_Z \\
	Z\& Z
	\arrow["f_{Y'}", dashrightarrow, from=1-1, to=1-2]
	\arrow["{\bl_Z}", from=1-2, to=2-2]
	\arrow["{\bl_Z}"', from=1-1, to=2-1]
	\arrow["f_Y", dashrightarrow, from=2-1, to=2-2]
\end{tikzcd}\]  
No hypersurface in $E_Z$ is contracted by $\bl_Z$ into
the indeterminacy locus of  $f_Y$ since $\bl_Z$ is a fibration and 
$\codim (\Ind(f_Y))\ge 2$. It follows that for any class $\omega \in \Num^1_{\RR}(Z)$
we have $( f_Y \circ \bl_Z)^* \omega = \bl_Z^*(f_Y^*(\omega))$, see, e.g.,~\cite[Lemma~2.3]{Regularization}.
In a similar way, $\bl_Z$ is regular hence  $(\bl_Z\circ f_{Y'})^* \omega = f_{Y'}^*(\bl_Z^*(\omega))$, and 
we conclude that $ f_{Y'}^*(\bl_Z^*(\omega)) = \bl_Z^*(f_Y^*(\omega))$ for any $\omega \in \Num^1_{\RR}(Z)$. 
Note that this already implies: 
\[\deg_1(f_{Y'}^n) \asymp \rho\left(f_{Y'}^{n*}\colon \Num^1_{\RR}(E_Z)\to \Num^1_{\RR}(E_Z)\right)
\ge 
\rho\left(f_Y^{n*}\colon \Num^1_{\RR}(Z)\to \Num^1_{\RR}(Z)\right) \asymp
\deg_1(f^n_l).\] 
Suppose first $f_Y|_Z$ is algebraically stable (in the sense that no hypersurface is contracted by $f_Y|_Z$ to a subvariety that is eventually mapped to $\Ind(f_Y|_Z)$). 
Write $f_{Y'}^{*} \xi = \xi + \bl_Z^*(\omega_\star)$ for some $\omega_\star\in  \NS_{\RR}(Z)$.
A repeated use of~\cite[Lemma~2.3]{Regularization} implies that for each integer $n$, we have
$f_Y^{n*}\omega = (f_Y^{*})^n\omega$. Note that $f_{Y'}|_{E_Z}$ is then also algebraically stable, hence for any class $\omega \in \Num^1_{\RR}(Z)$, and for any $t\in \RR$, we have
\[
f_{Y'}^{n*}(\bl_Z^*\omega+ t\xi) = 
(f_{Y'}^{*})^n(\bl_Z^*\omega+ t\xi)
= 
\bl_Z^* (f_Y^{*})^n\omega
+ 
t\xi 
+\sum_{j=0}^{n-1} \bl_Z^* (f_Y^{*})^j \omega_\star
\]
and we conclude that $\|f_{Y'}^{n*}(\bl_Z^*\omega+ t\xi)\| \le C n\times \rho\left(f_Y^{n*}\colon \Num^1_{\RR}(Z)\to \Num^1_{\RR}(Z)\right)$.

In the general case, we proceed as follows. We first choose a sequence of proper birational morphisms
$\mu_n\colon Z^{(n)} \to Z$ such that the birational maps
\[\mu_n^{-1} \circ \mu_{n+1}\colon Z^{(n+1)} \to Z^{(n)} \text{ and } F^{(n+1)}_Y:= \mu_n^{-1} \circ f_Y \circ \mu_{n+1}\colon Z^{(n+1)} \to Z^{(n)}\]
are both regular. To simplify notation, we write $f_Y^n = F^{(1)}_Y \circ \cdots \circ F^{(n-1)}_Y$, and $M_n := \mu_{1} \circ \cdots \circ \mu_{n-1}$.
Pick any ample class $\omega \in \NS_{\RR}(Z)$. Then
$\deg_1(f_Y^n) =F^{n*}_Y\omega \cdot M_n^* (\omega^{N-c-1})$.
Since $\bl_Z$ is the projectivization of a vector bundle $V$, we have a commutative square: 
 \[\begin{tikzcd}[ampersand replacement=\&]
	E_Z^{(n)}\& E_Z^{(n-1)} \\
	Z^{(n)}\& Z^{(n-1)}
	\arrow["\hat{\mu}_n", from=1-1, to=1-2]
	\arrow["{\bl^{(n-1)}}", from=1-2, to=2-2]
	\arrow["{\bl^{(n)}}"', from=1-1, to=2-1]
	\arrow["\mu_n", from=2-1, to=2-2]
\end{tikzcd}\]  
where $\hat{\mu}_n$ is also birational and $\bl^{(n)}$ is the projectivization of $M_n^*V$. 
Write $\hat{M}_n := \hat{\mu}_{n-1} \circ \cdots \circ \hat{\mu}_1$.
Observe that the following diagram is commutative:
 \[\begin{tikzcd}[ampersand replacement=\&, column sep=huge]
  \& E_Z^{(n)}\& \\
 Z^{(n)}\& E_Z^{(n-1)}\& E_Z^{(n-1)}\\
	Z^{(n-1)}\& Z^{(n-1)}\&
	\arrow["\mu_n"', from=2-1, to=3-1]
	\arrow[dashrightarrow, from=3-1, to=3-2]
	\arrow[dashrightarrow, from=2-2, to=2-3]
	\arrow["{\bl^{(n)}}"',  from=1-2, to=2-1]
	\arrow["{\bl^{(n-1)}}", from=2-3, to=3-2]
	\arrow["{\bl^{(n-1)}}", near start, from=2-2, to=3-1]
	\arrow["\hat{\mu}_n", from=1-2, to=2-2]
	\arrow["F^{(n)}_{Y'}", from=1-2, to=2-3]
	\arrow["F^{(n)}_Y", near start, crossing over, from=2-1, to=3-2]
\end{tikzcd}\]  
Pick any small  $t$ such that the class $\omega_+= \bl_Z^*(\omega) + t \xi$ is ample.  We have
\begin{align*}
\deg_1(f_{Y'}^n) 
&=F^{n*}_{Y'}\omega_+ \cdot \hat{M}_n^* (\omega_+^{N-1})
\\
&=
F^{n*}_{Y'} \bl_Z^*(\omega) \cdot \hat{M}_n^* (\omega_+^{N-1})
+ 
t F^{n*}_{Y'}\xi \cdot \hat{M}_n^* (\omega_+^{N-1})
\\
&=
 (\bl^{(n)})^* F^{n*}_{Y}(\omega) \cdot \hat{M}_n^* (\omega_+^{N-1})
+ 
t F^{n*}_{Y'}\xi \cdot \hat{M}_n^* (\omega_+^{N-1})
\le \deg_1(f^n_l) + t F^{n*}_{Y'}\xi \cdot \hat{M}_n^* (\omega_+^{N-1})
\end{align*}
The fact that the restriction of $f$ to the generic fiber of $\bl_Z$ has degree $1$ implies that 
we may write $F^{*}_{Y'} \xi = \hat{\mu}^*_1\xi + \bl^{(1)*}(\beta_\star)$ 
for some $\beta_\star \in \Num^1_{\RR}(Z^{(1)})$. And we obtain: 
\[
F^{n*}_{Y'}\xi= \hat{M}^*_n\xi + 
\sum_{j=0}^{n-1}\bl^{(n-j)*}(\mu_n^*\cdots \mu_{j+1}^*) F^{(j)*}_Y\beta_\star 
\]
 Since $|F^{(j)*}_Y\beta_\star \cdot M_j^*\omega^{N-1}|$ is bounded by $\deg_1(f^n)$ up to a uniform constant, we obtain
$F^{n*}_{Y'}\xi\cdot \hat{M}_n^* (\omega_+^{N-1}) = O(n\deg_1(f^n))$ as required.
 \end{proof}

\begin{proposition}\label{prop: dynamical degree on invariant subvariety}
 Let $X$ be a smooth projective variety and let $f\colon X\dashrightarrow X$ be a birational map.
 If~$Z\subset X$ is a codimension $c$ subvariety such that $Z\not\subset \Ind(f)$ and $f^\vee(Z) = Z$ then for all $0\leqslant k\leqslant \dim(Z)$, 
 there exists a constant $C>1$ such that 
 $\deg_k(f^n|_Z) \leqslant C\, \min\{\deg_k (f^n), \deg_{k+c} (f^n)\}$.  In particular, we have
  \[
  \lambda_k(f|_Z)\leqslant \min\{\lambda_k(f), \lambda_{k+c}(f)\}.
 \]
\end{proposition}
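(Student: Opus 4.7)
The plan is to realize $\deg_k(f^n|_Z)$ as an intersection number on a smooth projective resolution of the graph of $f^n$, and then to bound the cohomology class of the strict transform of $Z$ by a suitable multiple of $p_n^*\omega^c$ (resp.\ $q_n^*\omega^c$), where $\omega = c_1(L)$ for a fixed ample line bundle $L$ on $X$.

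First I would check the setup. The hypotheses $Z\not\subset\Ind(f)$ and $f^\vee(Z)=Z$ imply by induction that $Z\not\subset\Ind(f^n)$ and $f^{n\vee}(Z)=Z$ for every $n\geq 1$, so $f|_Z\colon Z\dashrightarrow Z$ is a dominant rational self-map and $\deg_k(f^n|_Z)$ is well-defined. For each $n$, fix a smooth projective variety $\Gamma_n$ with birational morphisms $p_n,q_n\colon\Gamma_n\to X$ representing $f^n$, and let $\widetilde Z_n\subset\Gamma_n$ denote the strict transform of $Z$ under $p_n$. Then $p_n|_{\widetilde Z_n}\colon\widetilde Z_n\to Z$ is birational and $q_n(\widetilde Z_n)\subset Z$ (by $f^{n\vee}(Z)=Z$), so the induced morphism $\widetilde Z_n\to Z\times Z$ is birational onto the graph of $f^n|_Z$. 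Consequently
\[
\deg_k(f^n|_Z)\;\asymp\;q_n^*\omega^k\cdot p_n^*\omega^{\dim Z-k}\cdot[\widetilde Z_n]
\]
as intersection numbers on $\Gamma_n$.

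The heart of the argument is a positivity lemma: there exists an integer $m\geq 1$, depending only on $(X,Z,L)$ and not on $n$, such that the two classes
\[
m^c p_n^*\omega^c-[\widetilde Z_n]
\quad\text{and}\quad
m^c q_n^*\omega^c-[\widetilde Z_n]
\]
are represented by effective cycles on $\Gamma_n$. To establish this I would use Serre vanishing to pick $c$ generic sections of $\I_Z\otimes L^m$ whose divisors $D_1,\dots,D_c$ cut out $Z$ transversely at its generic point, yielding that $m^c\omega^c-[Z]$ is represented by an effective cycle on $X$. Pulling back, the $p_n^*D_i$ (resp.\ $q_n^*D_i$) are effective divisors on $\Gamma_n$ whose product contains $\widetilde Z_n$ as a codimension-$c$ component of multiplicity at least one; by Fulton's refined intersection theory the remaining part of the product is again effective, which yields both inequalities.

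Granted the lemma, I would intersect each inequality with the nef class $q_n^*\omega^k\cdot p_n^*\omega^{N-c-k}$ (a product of pullbacks of nef classes). Using that such a nef class pairs non-negatively with any effective cycle of complementary dimension, the first inequality gives
\[
\deg_k(f^n|_Z)\;\leq\;C\,q_n^*\omega^k\cdot p_n^*\omega^{N-k}\;\asymp\;C\deg_k(f^n),
\]
and the second yields $\deg_k(f^n|_Z)\leq C\deg_{k+c}(f^n)$, with $C=m^c$ up to the implicit $\asymp$ constants. Taking $n$-th roots and passing to the limit then delivers $\lambda_k(f|_Z)\leq\min\{\lambda_k(f),\lambda_{k+c}(f)\}$. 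The principal obstacle I anticipate is the positivity lemma itself: since $p_n$ and $q_n$ are birational but in general not flat, and $\widetilde Z_n$ may meet their exceptional loci, one must check that excess intersections on $\Gamma_n$ do not spoil the effectivity of the residual — this is resolved by choosing the $D_i$ generically on $X$ and applying Fulton's excess intersection formula on the smooth variety $\Gamma_n$.
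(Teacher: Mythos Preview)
Your overall plan is sound and, if the positivity lemma were established, the deduction in your last paragraph is correct: pairing an effective class against a product of nef divisor classes is non-negative, and the two inequalities drop out immediately. The gap is in the justification of the positivity lemma itself.

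The appeal to Fulton's excess intersection formula does not give what you need. All the divisors $p_n^*D_i$ contain the closed set $p_n^{-1}(Z)$, and this set typically has irreducible components of codimension strictly less than $c$ (namely the exceptional components of $p_n$ lying over $Z$). Along such a component the intersection of the $p_n^*D_i$ is improper, and Fulton's refined product does \emph{not} assert that the resulting class is effective: if $E$ is a common divisorial component, a term like $E^2$ appears, and already for a $(-1)$-curve on a surface this is $-[\mathrm{pt}]$. Choosing the $D_i$ generic on $X$ cannot help, because the base locus $Z$ of the linear system $|\mathcal I_Z\otimes L^m|$ is fixed; genericity only moves the $D_i$ \emph{away} from $Z$ outside of $Z$ itself. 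So the sentence ``the remaining part of the product is again effective'' is the unjustified step.

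Your lemma is in fact true, but it needs a different argument. One clean route is a specialization: deform each $D_i$ in a one-parameter family $D_i(s)$ with $D_i(0)=D_i$ and $D_i(s)$ for $s\neq 0$ a general member of $|mL|$ (not containing $Z$). For $s\neq 0$ the $p_n^*D_i(s)$ meet properly, so their intersection is an honest effective cycle in the class $m^c p_n^*\omega^c$; taking the flat limit as $s\to 0$ produces an effective cycle in the same class which, near the generic point of $\widetilde Z_n$ (where $p_n$ is an isomorphism), visibly contains $\widetilde Z_n$ with multiplicity one. The same works for $q_n$ once you note that $\widetilde Z_n$ is also the strict transform of $Z$ under $q_n$ (this uses $f^{n\vee}(Z)=Z$ and that $f^n|_Z$ is birational).

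For comparison, the paper avoids this issue entirely. For the bound by $\deg_k(f^n)$ it writes $[Z]=\Omega_1-\Omega_2$ as a difference of \emph{basepoint free} numerical classes in the sense of Fulger--Lehmann; such classes pull back to basepoint free classes under any morphism and remain nef, so the computation on $\Gamma_n$ goes through without ever needing to control $[\widetilde Z_n]$ directly. For the bound by $\deg_{k+c}(f^n)$ the paper chooses a generic complete intersection $W\subset Z$ of class $c_1(L|_Z)^k$, arranged to meet every jumping locus of $\pi_{2,n}$ properly, and invokes a lemma of Dang to the effect that the proper transform of $W$ then represents $\pi_{2,n}^*[W]_X$; comparing this with the (smaller) effective cycle $(\pi_{2,n}^{-1})^\vee(W)\cap\Gamma_{n,Z}$ yields the inequality. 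Your approach is more symmetric and arguably more elementary once the positivity lemma is in hand, but as written it is missing precisely the positivity input that the paper supplies through these two devices.
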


\begin{proof}
By Hironaka's resolution of singularities and birational invariance of degrees up to uniform constants,  we may suppose that $Z$ is smooth. 
Fix an ample line bundle $L\to X$.

Recall the definition of basepoint free numerical classes from Fulger and Lehmann~\cite[\S5]{MR3592463}. 
Suffice it to say that any basepoint free class in $\Num^i_{\RR}(X)$ is both pseudo-effective and
nef, and that the cone of basepoint free classes has non-empty interior. 
Moreover, for any basepoint free class $\Omega \in \Num^i_{\RR}(X)$, there exists a constant $C>0$
such that $\Omega \le C c_1(L)^i$.

It follows that the fundamental class $[Z]$ can be written as a difference of two basepoint free classes 
$[Z] = \Omega_1 - \Omega_2$ with $\Omega_1, \Omega_2\in \Num_{\RR}^c(X)$, and we have
\begin{align*}
 \deg_k(f^n|_Z) &= f^{n*}c_1(L)^k \cdot c_1(L)^{N-c} \cdot [Z]
 \\
 &=
 f^{n*}c_1(L)^k \cdot c_1(L)^{N-c} \cdot \Omega_1- f^{n*}c_1(L)^k \cdot c_1(L)^{N-c} \cdot \Omega_2
\end{align*}
Beware that the class $f^{n*}c_1(L)^k$ is only pseudoeffective and not nef in general. 
To get around that difficulty, we consider for each $n$ a resolution of the graph  $\Gamma_n$ of $f^n$ with projection maps
$\pi_{1,n}, \pi_{2,n}\colon \Gamma \to X$ so that $f^n= \pi_{2,n}\circ \pi_{1,n}^{-1}$.
Then for $\epsilon\in\{1,2\}$, we get
\[
f^{n*}c_1(L)^k \cdot c_1(L)^{N-c-k} \cdot \Omega_\epsilon
=  
\pi_{2,n}^{*}c_1(L)^k \cdot \pi_1^*c_1(L)^{N-c-k} \cdot \pi_1^* \Omega_\epsilon
\le 
C\, \left(\pi_{2,n}^{*}c_1(L)^k \cdot \pi_1^*c_1(L)^{N-k}\right)
\]
and 
$ \deg_k(f^n|_Z)\le C \, \deg_k(f^n)$.

Denote by $\Gamma_{n,Z}$ the closure of the graph of $f^n|_{Z\setminus \Ind(f)}$ in $\Gamma_n$.
Let $W$ be any complete intersection subvariety of codimension $k$ in $Z$ whose
fundamental class is equal to $ c_1(L|_Z)^k$. Since  $f^\vee(Z) = Z$, and $f$ is birational, we may choose $W$ so that 
it intersects properly each component of the locus $V_l :=\{ p\in X, \dim \pi_{2,n}^{-1}(p) \ge l\}$
for all $l\ge1$ and all $n\ge0$. Note that $W$ defines two fundamental classes: $[W]_Z=  c_1(L|_Z)^k \in \Num^k(Z)$, and 
$[W]_X=c_1(L)^k \cdot [Z] \in \Num^{k+c}(X)$.

By~\cite[Lemma~4.2]{MR4048444}, the proper transform
$(\pi_{2,n}^{-1})^\vee(W)$ (resp. $(\pi_{2,n}^{-1})^\vee(W)\cap \Gamma_{n,Z}$) represents $\pi_{2,n}^*[W]_X$ (resp. 
$\pi_{2,n}^*[W]_Z$). In particular, the class
$\pi_{2,n}^*[W]_X - \pi_{2,n}^*[W]_Z$ is represented by an effective cycle.
We now compute
\begin{align*}
\deg_k(f^n|Z) 
&=
[(\pi_{2,n}^{-1})^\vee(W)\cap \Gamma_{n,Z}]\cdot \pi_{1,n}^* c_1(L)^{N-c-k}
\\
&\le [(\pi_{2,n}^{-1})^\vee(W)]\cdot \pi_{1,n}^* c_1(L)^{N-c-k}
\\
&= f^{n*}[W]\cdot c_1(L)^{N-c-k}= \deg_{k+c}(f^n)
\end{align*}
This concludes the proof.
\end{proof}

\subsection{Regularization of bimeromorphic mappings}
A bimeromorphic self-map $f\colon X \dto X$ of a normal complex manifold is said to be regularizable
iff there exists a proper bimeromophic map $\varphi \colon X \dto Y$ such that
$\pi \circ f \circ\pi^{-1}\colon Y \to Y$ is a biholomorphism. 

We collect here two observations on this notion.

\begin{proposition}\label{prop:smooth-regular}
Suppose  the bimeromorphic self-map $f\colon X \dto X$ is regularizable. Then 
there exist a smooth manifold $Y$ and
a proper bimeromophic map $\varphi \colon X \dto Y$ 
such that
$\pi \circ f \circ\pi^{-1}\colon Y \to Y$ is a biholomorphism. 
\end{proposition}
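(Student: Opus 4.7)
The plan is to start with any regularization of $f$ and then replace the target by a canonical resolution of its singularities. Concretely, let $\varphi \colon X \dto Y$ be a proper bimeromorphic map realizing the regularization, so that $f_Y := \varphi \circ f \circ \varphi^{-1} \colon Y \to Y$ is a biholomorphism of the (a priori singular but normal) complex space $Y$. The task is then to promote $Y$ to a smooth model, while retaining the fact that $f$ conjugates to a biholomorphism.

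First, I would invoke a functorial resolution of singularities in the complex analytic category, as in~\cite{MR2289519}. This yields a proper bimeromorphic morphism $\pi \colon \tilde{Y} \to Y$ with $\tilde{Y}$ smooth, which is an isomorphism over the smooth locus of $Y$, and which is canonical in the sense that every biholomorphism $g \colon Y \to Y$ admits a unique lift to a biholomorphism $\tilde{g} \colon \tilde{Y} \to \tilde{Y}$ intertwining $\pi$, i.e.\ satisfying $\pi \circ \tilde g = g \circ \pi$. Applying this equivariance to $f_Y$ (and to $f_Y^{-1}$) furnishes a biholomorphism $\tilde{f}_Y \colon \tilde{Y} \to \tilde{Y}$ that lifts $f_Y$.

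I would then set $\tilde{\varphi} := \pi^{-1} \circ \varphi \colon X \dto \tilde{Y}$, which is a proper bimeromorphic map as a composition of two such, and simply observe that
\[
\tilde{\varphi} \circ f \circ \tilde{\varphi}^{-1} \;=\; \pi^{-1} \circ \varphi \circ f \circ \varphi^{-1} \circ \pi \;=\; \pi^{-1} \circ f_Y \circ \pi \;=\; \tilde{f}_Y,
\]
which is biholomorphic. The only genuinely non-trivial input is the availability of a functorial, equivariant resolution in the bimeromorphic (rather than purely algebraic) setting; this is now classical, and I would simply quote it from~\cite{MR2289519}. Once granted, the argument reduces to the diagram chase above, so no step is expected to present a serious obstacle.
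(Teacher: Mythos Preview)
Your argument is correct and is exactly the approach the paper takes: the paper's proof is a single sentence invoking the existence of a functorial resolution of singularities in the analytic category (citing~\cite[Theorem~2.0.1]{MR2500573}), and you have simply spelled out the straightforward diagram chase that this entails.
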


\begin{proof}
This is a consequence of the existence of a functorial resolution of singularities in the category of analytic spaces, see~\cite[Theorem~2.0.1]{MR2500573}.
\end{proof}

\begin{proposition}\label{prop:iterate-regular}
Let  $f\colon X \dto X$ be any  bimeromorphic self-map of a normal complex variety. 
If $f^n$ is regularizable for some $n\in \NN^*$, then $f$ is also regularizable. 
\end{proposition}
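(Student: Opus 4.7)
The plan is to reduce to a cyclic-shift construction on a graph model. Write $g := \varphi\circ f^n\circ\varphi^{-1}\colon Y\to Y$ for the biholomorphism produced by the regularization of $f^n$ through $\varphi\colon X\dto Y$, and set $F := \varphi\circ f\circ\varphi^{-1}\colon Y\dto Y$. It is enough to show that $F$ is regularizable on some proper bimeromorphic model of $Y$: composing with $\varphi$ will then regularize $f$.

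Next, I would form the orbit-graph model. Let $Z\subset Y^n$ be the closure of the image of the meromorphic map
\[
(\id,F,F^2,\ldots,F^{n-1})\colon Y\dto Y^n~,
\]
taken on the open set where $F,\ldots,F^{n-1}$ are simultaneously defined. The projection on the first factor $p_1\colon Z\to Y$ is a proper morphism (since $Y^n$ is proper), and it is bimeromorphic with inverse $y_0\dto (y_0,F(y_0),\ldots,F^{n-1}(y_0))$. In particular $p_1$ is a valid proper bimeromorphic model of $Y$.

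Now I would produce the regularization by the cyclic shift. Consider the biholomorphism
\[
\tilde\sigma\colon Y^n\to Y^n~,\qquad (y_0,y_1,\ldots,y_{n-1})\longmapsto (y_1,\ldots,y_{n-1},g(y_0))~,
\]
which is a biholomorphism of $Y^n$ because $g$ is. On the dense subset of $Z$ where $y_i=F^i(y_0)$, an elementary verification gives
\[
\tilde\sigma(y_0,F(y_0),\ldots,F^{n-1}(y_0)) \;=\; (F(y_0),F^2(y_0),\ldots,F^{n-1}(y_0),g(y_0))~,
\]
and since $g(y_0)=F^n(y_0)$ generically, the image lies in $Z$. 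By continuity and closedness of $Z$ we get $\tilde\sigma(Z)\subset Z$; applying the same argument to $\tilde\sigma^{-1}$ yields $\tilde\sigma(Z)=Z$. Hence $\sigma:=\tilde\sigma|_Z$ is a biholomorphism of $Z$. A direct computation on the dense open set where everything is defined shows $p_1\circ\sigma = F\circ p_1$, so $\sigma$ is the regularization of $F$ on $Z$. Composing, $\psi := p_1^{-1}\circ\varphi\colon X\dto Z$ is proper bimeromorphic and conjugates $f$ to $\sigma$, proving the proposition.

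The only delicate point is checking that $\tilde\sigma$ preserves $Z$ as a \emph{set} (not just generically), which is what allows the shift to restrict to a biholomorphism of $Z$ rather than merely a bimeromorphic self-map; this is ensured by the fact that $\tilde\sigma$ is already biholomorphic on the ambient $Y^n$. If one wishes $Z$ to be smooth, it suffices to apply Proposition~\ref{prop:smooth-regular} at the end, replacing $Z$ by a functorial resolution, which equivariantly lifts the action of $\sigma$.
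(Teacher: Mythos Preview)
Your proof is correct and follows essentially the same approach as the paper's: both form the orbit-graph $\Gamma\subset Y^n$ of $(\id,F,\ldots,F^{n-1})$ and observe that the cyclic shift $(y_0,\ldots,y_{n-1})\mapsto(y_1,\ldots,y_{n-1},g(y_0))$ is a biholomorphism of the ambient product preserving $\Gamma$ and conjugating to $F$ via the first projection. The paper's version is terser (it works directly on $X^n$, implicitly having replaced $X$ by the model on which $f^n$ is regular), while you spell out the passage to $Y$ and the two-sided inclusion $\tilde\sigma(Z)=Z$ more carefully; the content is the same.
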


\begin{proof}
Let $U$ be an open (for the analytic Zariski topology) dense subset in $X$ such that the restrictions of
$f, \cdots, f^{n-1}$ are all regular on $U$. 
Define $\Gamma$ to be the closure (for the euclidean topology) of the set of 
points $\{(x, f(x), \cdots, f^{n-1}(x)),\, x\in U\}\subset X^n$. 
The first projection $\pi_1\colon \Gamma \to X$ is a proper bimeromorphic map. 
On $X^n$, consider the biholomorphism $F(x_0, \cdots x_{n-2} , x_{n-1}) = (x_1, \cdots , x_{n-1}, f^n(x_0))$. 
This map preserves $\Gamma$ and $\pi_1 \circ F = f \circ \pi_1$. 
This completes the proof. 
\end{proof}


\section{Semi-abelian varieties}\label{sec:semi-abelian}
In this section we recall some basic facts on the geometry of semi-abelian varieties and compute the degree growth of their 
automorphisms.

\subsection{Geometry of semi-abelian varieties} 
Our reference for this section is~\cite[Chapter~5]{MR3156076}.

 A {\it semi-abelian variety} $G$ is a connected commutative complex algebraic group 
fitting into an exact sequence of algebraic groups:
 \begin{equation}\label{equation: semi-abelian variety}
  1 \to T \to G \to A \to 1,
 \end{equation}
 where $T\cong \GGmr$ is a split algebraic torus and $A$ is an abelian variety.
 Observe that $G$ is projective if and only if it is abelian, and $G$ is affine if and only if it
 is an affine torus.
 
By Chevalley's structure theorem, the torus $T$ is uniquely determined so that the sequence~\eqref{equation: semi-abelian variety}
is unique (there might be other exact sequences  $1 \to T \to G \to A' \to 1$ with $A'$ abelian
in category of complex Lie groups, see~\cite[\S 5.1.5]{MR3156076}).  

Write $r=\dim(T)$ and $g=\dim(A)$. Since $T$, $G$ and $A$ are abelian, the exponential maps
$\exp\colon\Lie(T)\to T$,  $\exp\colon \Lie(G) \to G$, and $\exp\colon \Lie(A) \to A$
have discrete kernels, and they all fit into the following commutative diagram in which both 
lines are exact: 
\[\begin{tikzcd}[ampersand replacement=\&]
	0 \& {\Lie(T)} \& {\Lie(G)} \& {\Lie(A)} \& 0 \\
	1 \& T \& G \& A \& 1
	\arrow[from=1-1, to=1-2]
	\arrow[from=1-2, to=1-3]
	\arrow[from=1-3, to=1-4]
	\arrow[from=1-4, to=1-5]
	\arrow[from=2-1, to=2-2]
	\arrow[from=2-2, to=2-3]
	\arrow[from=2-3, to=2-4]
	\arrow[from=2-4, to=2-5]
	\arrow["{\exp_T}"', from=1-2, to=2-2]
	\arrow["{\exp_G}"', from=1-3, to=2-3]
	\arrow["{\exp_A}", from=1-4, to=2-4]
\end{tikzcd}\]
We write $V:= \Lie(G) \simeq \CC^{r+g}$, and $\Lambda := \ker(\exp_G)$.
The latter is a discrete subgroup of rank $r+ 2g$.
Observe that  $W:= \Lie(T) \simeq \CC^r$ is a subspace of $V$ such that $\Lambda_W := \Lambda \cap W$
is a discrete subgroup of rank $r$ that generates $W$ as a complex vector space. 
Write $\Lambda_{\RR}:=\Lambda\otimes_{\ZZ}\RR$, and consider the complex vector space
\begin{equation*}
 U := \Lambda_{\RR}\cap i\Lambda_{\RR} \subset V.
\end{equation*}
Then $U$ is a complex vector subspace of $V$, and a dimension argument implies $V = W\oplus U$. 

We have thus obtained:
\begin{proposition}\label{proposition: construction of semiabelian varieties}
 To any semi-abelian variety $G$ is associated a complex vector space $V$ of dimension $r+g$, a discrete subgroup
 $\Lambda$ of rank $r+2g$, and a splitting $ V= W\oplus U$ of complex vector spaces
 such that 
 \begin{itemize}
 \item
 $\dim(W)=r$, $\dim(U)=g$;
\item 
 $\Lambda_W:= \Lambda\cap W$ has rank $r$ and generates $W$ as a complex vector space;
 \item
 $U = \Lambda_{\RR}\cap i\Lambda_{\RR}$;
 \item
 the image of $\Lambda$ in $V/W$ is a cocompact lattice $\bar{\Lambda}$ and $(V/W)/\bar{\Lambda}$ is an abelian variety of dimension $g$;
 \item 
 the exact sequence 
$1\to W/\Lambda_W\to G\to (V/W)/\bar{\Lambda}$ is the defining sequence~\eqref{equation: semi-abelian variety} canonically attached to $G$.
\end{itemize}
The quadruple $(V,W,U,\Lambda)$ is unique up to isomorphism.
\end{proposition}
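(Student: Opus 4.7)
The plan is to verify in turn each of the bulleted assertions using the construction $V := \Lie(G)$, $\Lambda := \ker(\exp_G)$, $W := \Lie(T)$ and $U := \Lambda_\RR \cap i\Lambda_\RR$ already set up in the paragraphs preceding the statement. Most of the assertions are tautological; the substantive content lies in the decomposition $V = W \oplus U$ and the computation of $\dim_\CC U$.

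First I would record that $\dim_\CC V = r+g$ and $\dim_\CC W = r$ follow from the Lie algebra version of~\eqref{equation: semi-abelian variety}. Since $T \simeq \GGmr$, the kernel $\Lambda_W = \ker(\exp_T)$ is free of rank $r$, and any $\ZZ$-basis is automatically a $\CC$-basis of $W$, which settles the second bullet. A diagram chase on the commutative diagram of exponentials gives $\rk(\Lambda) = r+2g$ and identifies the image of $\Lambda$ in $V/W$ with $\ker(\exp_A)$; from these the fourth and fifth bullets are formal, since the exact sequence of Lie groups is recovered by applying the exponential to the pair $(W,V)$.

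The core of the argument is to show that $U$ is a complex subspace of $V$ of complex dimension $g$ and that $V = W \oplus U$. Stability of $U$ under multiplication by $i$ follows from a short check using the symmetric definition (if $u \in \Lambda_\RR \cap i\Lambda_\RR$ and $u = i\xi$ with $\xi \in \Lambda_\RR$, then $iu = -\xi \in \Lambda_\RR$ and $iu \in i\Lambda_\RR$ since $u \in \Lambda_\RR$). For the dimension of $U$, the plan is to show that $\Lambda_\RR + i\Lambda_\RR = V$: the image of $\Lambda_\RR$ in $V/W$ spans $V/W$ over $\RR$ by cocompactness of $\bar{\Lambda}$, while $\Lambda_W$ spans $W$ over $\CC$, so $W \subset \Lambda_\RR + i\Lambda_\RR$. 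A real dimension count then gives $\dim_\RR U = 2(r+2g) - (2r+2g) = 2g$. For the intersection $W \cap U$, I would fix a common $\ZZ$-basis of $\Lambda_W$ which is also a $\CC$-basis of $W$, and observe that its $\RR$-span $R$ and the rotate $iR$ are $\RR$-complementary in $W$, so that $W \cap U \subset R \cap iR = 0$. Combined, this yields $V = W \oplus U$ and $\dim_\CC U = g$.

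Finally, the quadruple $(V, W, U, \Lambda)$ is canonically attached to $G$: $V$ and $\Lambda$ are the Lie algebra and the kernel of the exponential, $W$ is the Lie algebra of the maximal affine torus (unique by Chevalley, as recalled in the excerpt), and $U$ is given by an intrinsic formula in terms of $\Lambda$. I do not foresee a serious obstacle; the only point that requires some care is keeping the $\RR$-span and the $\CC$-span of $\Lambda_W$ separate in the computation of $W \cap U$, and confirming that the naive diagram chase establishing $\rk(\Lambda) = r + 2g$ is valid despite the non-exactness of $\exp$ on the outer terms.
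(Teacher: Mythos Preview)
Your proposal is correct and follows exactly the paper's approach: the paper records the same construction $V=\Lie(G)$, $\Lambda=\ker(\exp_G)$, $W=\Lie(T)$, $U=\Lambda_\RR\cap i\Lambda_\RR$ in the paragraphs preceding the statement and then simply writes ``a dimension argument implies $V=W\oplus U$,'' which is precisely the computation you spell out. The only point left implicit in your sketch is why $W\cap\Lambda_\RR$ equals the $\RR$-span $R$ of $\Lambda_W$ (needed for $W\cap U\subset R\cap iR$); this follows from splitting $0\to\Lambda_W\to\Lambda\to\bar\Lambda\to0$ and using that the images of the lifted generators are $\RR$-independent in $V/W$.
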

A quadruple $(V,W,U,\Lambda)$ as above is called a presentation of the semi-abelian variety $G$.
\begin{remark}
The intersection $\Lambda \cap U$ is a discrete subgroup of $U$ whose rank 
can be any integer from $0$ and $2g$. Observe that $G$ is a product (as an algebraic group) if and only if
this rank is equal to $2g$.
 \end{remark}

\subsection{Algebraic compactification of semi-abelian varieties}\label{sec:compactification}
Let $G$ be a semi-abelian variety. 
The exact sequence~\eqref{equation: semi-abelian variety} exhibits $G$ as the total space of
a principal $T$-bundle over $A$ (in algebraic terms, $G$ is a $T$-torsor over $A$).
In terms of a presentation $(V,W,U,\Lambda)$ of $G$, this can be understood as follows. 

The family of affine planes $z+U$ projects to a smooth foliation on $G$ that is transverse to the fibers
of the canonical projection $\pi \colon G \to A$, and its holonomy gives rise to a 
monodromy representation given by a morphism $\bar{\Lambda}$ to the group of biholomorphisms of $T$. 

In concrete terms, pick any $\bar{\lambda} \in \bar{\Lambda}$ and lift it to $\lambda \in \Lambda$. 
Write $\lambda = \lambda_W + \lambda_U$ with  $\lambda_W\in W$ and  $\lambda_U\in U$. 
Observe that in general $\lambda_W\notin \Lambda_W$. However since $\Lambda_{\RR} =  \Lambda_{W,\RR}\oplus U$, 
it follows that $\lambda_W\in \Lambda_{W,\RR}$. 
The monodromy associated $\bar{\lambda}$ is thus the translation by  $-\lambda_W \in \Lambda_{W,\RR}$. 
In particular,  the monodromy is given by a morphism  $\rho \colon  \bar{\Lambda} \to K$
where $K= \Lambda_{W,\RR}/\Lambda_W$ is the maximal compact (real) subgroup of $T= W/\Lambda_W$.

Set $X:= V/\Lambda_W$: this is a principal $T$-bundle over $V/W$. Observe that the canonical splitting of $V$ descends to a biholomorphism
$\varphi\colon W/\Lambda_W \oplus U \to X$.
One can then recover $G$ as the quotient of  $X$ by the action of $\bar{\Lambda}$ given by 
$\bar{\lambda}\cdot \varphi (w,u) :=  \varphi(\rho(\bar{\lambda})\cdot w,u+\lambda_U)$.

\smallskip

Choose any smooth projective toric variety $M$ of dimension $r$. 
Then $M$ is equipped with a $T$-action which has a dense orbit, and
this orbit is canonically identified with $T$. 
Then we define $\bar{G}_M$ as the quotient
of $M \times U$ by the action of $\bar{\Lambda}$
given by $\bar{\lambda}\cdot (w,u) = (\rho(\bar{\lambda})\cdot w,u+\lambda_U)$
for $w\in M$ and $u\in U$. 
In this way we obtain all smooth equivariant algebraic compactifications of $G$.

\smallskip

We shall only use the case $M= (\p^1)^r$ in the sequel, and thus write $\bar{G} := \bar{G}_{(\p^1)^r}$
in order to simplify notations. 

The choice of a basis for $\Lambda_W$ gives canonical coordinates $w_1,\dots, w_r$ on $T=W/\Lambda_W$, hence on $(\p^1)^r$.
In the same way,  we choose a basis of $V/W\cong U$ which gives linear 
coordinates $z_1,\dots, z_g$ on $V/W$. 
The positive smooth $(1,1)$ form 
\begin{equation}\label{def:kahler}
 {\omega} =  \frac{i}{2} \sum_{k = 1}^r\frac{dw_k\wedge d\overline{w_k}}{(1+w_k\overline{w_k})^2} + \frac{i}{2} \sum_{j=1}^g dz_j\wedge d\overline{z_j}.
\end{equation}
on $(\p^1)^r\times U$ then descends to a Kähler form on $\bar{G}$ since the monodromy representation takes its values in $K$.


\subsection{Automorphisms of semi-abelian varieties} 

Let $G$ be a semi-abelian variety. We let $\Aut(G)$ be the group of algebraic biholomorphisms\footnote{Observe that the group of all biholomorphisms of $G$ might be larger than $\Aut(G)$.}
of $G$. This group contains all translations $z\mapsto z+x$ for any $x\in G$. 

Pick any element $g\in \Aut(G)$. Since any algebraic map from $T$ to $A$ is constant, any algebraic self-map of $G$
 descends to $A$. In particular, for any $g\in  \Aut(G)$ fixing the neutral point, 
the map $\varphi_y(x):= g(x+y) - g(x) -g(y)$ descends to the zero map on $A$, and we have
$\varphi_y(T) \subset T$. Since $\varphi_y$ is algebraic 
its restriction to $T$ is a morphism, hence $\varphi_y \equiv 0$. This proves any $g\in G$
is the composition of a group morphism and a translation, so that we have the following exact sequence
\[
1 \to G \to \Aut(G) \to \Aut_\bullet(G)\to 1~,\]
where $\Aut_\bullet(G)$ is the subgroup of algebraic group isomorphisms. 

Since any algebraic biholomorphism descends to $A$, the canonical exact sequence~\eqref{equation: semi-abelian variety} also yields two exact sequences
\[  1 \to H_T \to \Aut(G) \to H_A \to 1,  \text{ and } 
1 \to H_{T,\bullet}  \to \Aut(G_\bullet) \to H_{A,\bullet} \to 1
 \]
 where $H_T$ (resp. $H_A$) is a subgroup of
  $\Aut(T)$ (resp. $\Aut(A)$); and 
   $H_{T,\bullet}$ (resp. $H_{A,\bullet}$) is a subgroup of
  $\Aut_\bullet(T)\simeq \SL(r,\ZZ)$ (resp. $\Aut_\bullet(A)$ which is a discrete subgroup of $\GL(g,\CC)$).

\begin{lemma}\label{lemma: differential of automorphism of semiabelian variety}
Let  $(V,W,U,\Lambda)$ be a presentation of $G$, and $\pr \colon V\to G$ be the canonical morphism.
Then for any $f \in \Aut(G)$ there exist a point $x\in G$ and linear endomorphisms $u_T(f) \colon W \to W$, $u_A(f) \colon U\to U$
such that
$f(\pr(v)) = \pr(u(f)(v))+ x$ for all $v\in V$, 
where $u(f)$ is the endomorphism of  $V= W\oplus U$ defined by $u(f) = u_T(f) \oplus u_A(f)$.
\end{lemma}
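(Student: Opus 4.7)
The plan is to reduce to the case where $f$ is a group automorphism, lift $f$ to a $\CC$-linear map on the universal cover $V$, and show this linear map preserves the decomposition $V = W \oplus U$ by exploiting the intrinsic characterizations of $W$ and $U$.

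First, I would use the exact sequence $1\to G\to \Aut(G)\to \Aut_\bullet(G)\to 1$ established just above the statement. Writing $f = \tau_x\circ g$ with $g\in \Aut_\bullet(G)$ a group automorphism fixing the neutral element and $\tau_x$ the translation by some $x\in G$, it suffices to establish the lemma for $g$.

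Next I would lift $g$ to a linear map on $V$. Since $G$ is commutative and connected, the exponential map $\exp_G\colon V\to G$ is a surjective group homomorphism with kernel $\Lambda$, and in particular $V$ is the universal cover of $G$. Since $g$ is holomorphic and fixes $0$, it lifts uniquely to a holomorphic map $\tilde{g}\colon V\to V$ fixing $0$; the uniqueness of lifts together with the fact that $g$ is a group homomorphism forces $\tilde{g}$ to be a holomorphic group homomorphism from the additive group $(V,+)$ to itself, hence a $\CC$-linear endomorphism. I set $u(g):=\tilde{g}$; it preserves $\Lambda$ because it descends to $G=V/\Lambda$.

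The core step is to show $u(g)$ preserves both $W$ and $U$. For $W=\Lie(T)$, the uniqueness part of Chevalley's structure theorem recalled above implies that the maximal split torus $T$ is characteristic in $G$; hence $g(T)=T$, so $u(g)(W)=W$. For $U=\Lambda_\RR\cap i\Lambda_\RR$, note that $u(g)$ preserves $\Lambda$ hence $\Lambda_\RR=\Lambda\otimes_\ZZ\RR$; being $\CC$-linear, it also preserves $i\Lambda_\RR$, hence their intersection $U$. Since $V=W\oplus U$ as complex vector spaces and each summand is stable, we can write $u(g)=u_T(g)\oplus u_A(g)$ with $u_T(g)\in \End_\CC(W)$ and $u_A(g)\in \End_\CC(U)$. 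Composing with the translation $\tau_x$ then yields $f(\pr(v)) = \pr(u(g)(v))+x$ as required.

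The main subtlety I expect is justifying that $u(g)$ preserves $W$: this rests on the fact that the torus part $T$ in the Chevalley extension~\eqref{equation: semi-abelian variety} is intrinsic to $G$ (as opposed to merely unique up to a choice of splitting in the analytic category), so that any algebraic automorphism must send $T$ to itself. All the remaining assertions then fall out from the rigidity of holomorphic homomorphisms of $(V,+)$ and the intrinsic formula $U=\Lambda_\RR\cap i\Lambda_\RR$ from Proposition~\ref{proposition: construction of semiabelian varieties}.
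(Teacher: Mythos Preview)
Your proposal is correct and follows essentially the same approach as the paper: reduce to a group automorphism by translating, lift to a $\CC$-linear map on $V$ preserving $\Lambda$, and then check that both $W$ and $U$ are invariant. The only cosmetic difference is in the justification that $u(g)$ preserves $W$: you invoke the uniqueness in Chevalley's theorem to say $T$ is characteristic, whereas the paper refers back to the argument immediately preceding the lemma (any algebraic map $T\to A$ is constant, so any algebraic self-map of $G$ descends to $A$ and hence sends $T$ into $T$); these are two phrasings of the same fact.
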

\begin{proof}
Replacing $f$ by $f - x$ with $x=f(0)$ we may suppose $f(0) = 0$.
By our previous arguments, $f$ is then a group morphism. It thus lifts to the universal cover $V$
of $G$ and defines an endomorphism $u(f) \colon V\to V$ such that $u(f)(\Lambda) = \Lambda$. 
Since $u(f)$ is $\CC$-linear, it preserves $U = \Lambda_\RR\cap i \Lambda_\RR$. 
On the other hand, we proved that $g$ must preserve $T$, hence $u(f)$ fixes $W$ too.
We have thus proved that $u(f)$ preserves each factor of the splitting $V= W\oplus U$ as required.
\end{proof}

Recall that an \emph{isogeny} $f\colon G \to G'$ between two semi-abelian varieties is a surjective group morphism
with finite kernel. This is equivalent to impose $f$ to be finite and surjective.

We shall below  consider the ring of group endomorphisms $\End(G)$ which is isomorphic to the set of complex linear
maps $u \colon V \to V$ such that $u(\Lambda)\subset \Lambda$.

\subsection{Degree growth of automorphisms of semi-abelian varieties}

\begin{theorem}\label{theorem: dynamical degrees on semi-abelian variety}
Let $G$ be any semi-abelian variety, and $(V, W, U, \Lambda)$ be a presentation of $G$.
Let $\pi\colon G \to A$ be the canonical projection to an abelian variety such that $\ker(\pi)$
is a split torus $T$.

Pick any automorphism $f\in \Aut(G)$, and let $x\in G$, $u_A(f) \in \End(U)$ and $u_T(f) \in \End(W)$
such that $f(\pi(v)) = \pi(u(f)(v)) + x$ for all $v\in V$ with $u(f) = u_T(f) \oplus u_A(f)$.
Then, we have
\[
\deg_k(f^n)
\asymp
\max_{ 0\le j \le k} 
\left\{
\| \Lambda^{j,j} u_A(f)^n \| \times \| \Lambda^{k-j} u_T(f)^n\|
\right\}
\]
\end{theorem}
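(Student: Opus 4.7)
The plan is to compute $\deg_k(f^n)$ on the smooth toric compactification $\bar G$ of $G$ constructed in \S\ref{sec:compactification}, using the explicit Kähler form $\omega$ from~\eqref{def:kahler}. The crucial feature of $\omega$ is that it splits naturally as $\omega=\omega_T+\omega_A$, where $\omega_T$ is the Fubini-Study form along the toric fiber direction $(\PP^1)^r$ and $\omega_A=\tfrac{i}{2}\sum_j dz_j\wedge d\overline{z_j}$ is a Kähler form in the base direction $A$.

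As a preliminary reduction, I would first dispose of the translation term $x$. Any translation on $G$ extends to a biregular automorphism of $\bar G$: translations by $T$ extend because the toric boundary strata of $(\PP^1)^r$ are $T$-invariant, and translations coming from $U$ simply shift the base coordinate. Since translations act trivially on $H^{*,*}(\bar G,\CC)$, they do not affect the asymptotics of $\deg_k(f^n)$, and I may assume that $f$ is a group morphism whose lift to $V$ respects the decomposition $V=W\oplus U$ as $u(f)=u_T(f)\oplus u_A(f)$.

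The heart of the proof is then a Fubini-type computation. Because $u(f)$ preserves the splitting, the pull-back $f^{n*}\omega_T$ is a form in the toric direction only, and $f^{n*}\omega_A$ is a form in the base direction only. Expanding $\omega^k\wedge\omega^{N-k}$ binomially and using $\omega_T^{r+1}=0$ and $\omega_A^{g+1}=0$, one obtains
\begin{equation*}
\int_{\bar G} f^{n*}\omega^k\wedge\omega^{N-k}
\;\asymp\; \sum_{a} \int_{\bar G} (f^{n*}\omega_T)^{a}\wedge\omega_T^{r-a}\;\wedge\;(f^{n*}\omega_A)^{k-a}\wedge\omega_A^{g-k+a},
\end{equation*}
summed over $\max(0,k-g)\le a\le\min(k,r)$. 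The monodromy $\rho\colon\bar\Lambda\to K$ of the fibration $\pi\colon\bar G\to A$ takes values in the compact subgroup $K\subset T$, which preserves $\omega_T$, so fiberwise integration of the toric factor is independent of the base point, and each summand factorises as
\[
\deg_a(u_T(f)^n)\cdot \deg_{k-a}(u_A(f)^n).
\]

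To conclude, it remains to invoke the two known asymptotic identifications: $\deg_a(u_T(f)^n)\asymp\|\Lambda^{a}u_T(f)^n\|$ for monomial maps on the toric variety $(\PP^1)^r$, which is precisely the content of~\cite{MR3043585,MR3059849}; and $\deg_j(u_A(f)^n)\asymp\|\Lambda^{j,j}u_A(f)^n\|$ for automorphisms of abelian varieties, coming from the Hodge decomposition $H^{j,j}(A,\CC)\simeq\Lambda^j U^*\otimes\Lambda^j\overline{U^*}$ and the fact that the spectral radius governs degree growth up to uniform constants by~\eqref{eq:spec}. Relabelling $j:=k-a$ then gives exactly the claimed estimate. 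The main technical obstacle lies in the fact that $u_T(f)$ only extends as a \emph{rational} self-map of $(\PP^1)^r$, so the forms $f^{n*}\omega_T$ are singular along an indeterminacy locus, and the toric integral above must be interpreted via a resolution of the graph of $u_T(f)^n$, as in the general definition of $\deg_a$ recalled in \S\ref{sec:growth}; the saving feature is that the indeterminacies of $u_T(f)^n$ are concentrated on the toric boundary strata, which are precisely the loci preserved by the $K$-valued monodromy, so the fiberwise factorisation remains valid after pull-back to a suitable simultaneous modification of $\bar G$ and $(\PP^1)^r$.
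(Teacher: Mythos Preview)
Your proposal is correct and follows essentially the same route as the paper: reduce to a group morphism, split $\omega=\omega_T+\omega_A$, expand binomially, and invoke the known degree asymptotics for monomial maps on toric varieties and for linear maps on abelian varieties. The only minor difference is that the paper sidesteps your final technical worry about the rationality of $f$ on $\bar G$ by passing from $\int_{\bar G}$ to $\int_G$ (where $f$ is regular) and then lifting to a fundamental domain $\Pi(G)\simeq T\times\Pi(A)$ for the $\bar\Lambda$-action, which makes the Fubini factorisation immediate without any modification of $\bar G$.
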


Let $u\colon V\to V$ be any endomorphism. 
Then for all $j$, we denote by $\Lambda^j u$ the induced endomorphism on 
the vector space $\Lambda^j V^*$; and by $\Lambda^{j,j} u$ the endomorphism
on the  space $\Lambda^{j,j} V^*$ of $2j$-forms to type $(j,j)$.

\begin{proof}
We compute the degrees in the projective compactification $\bar{G}$ as in \S\ref{sec:compactification}
with the Kähler form~\eqref{def:kahler}.

Observe that any translation on $G$ extends to an automorphism on $\bar{G}$, 
hence acts as the identity on the cohomology of $\bar{G}$. It follows that we may suppose that $x=0$. 
It follows that $f$ restricts to a morphism $f_T$ on $T$, and also descends to a morphism $f_A\colon A\to A$. 
Write
\[
 {\omega}_T :=  \frac{i}{2} \sum_{k = 1}^r\frac{dw_k\wedge d\overline{w_k}}{(1+w_k\overline{w_k})^2}
 \text{ and } \omega_A:=
  \frac{i}{2} \sum_{j=1}^g dz_j\wedge d\overline{z_j}.
\]
To simplify notation we identify $\omega_A$ with its image in the quotient space $A$.
 Since $\lambda_k(f_A)$ equals the spectral norm of the pull-back action of $f$ on $H^{k,k}(A)$, we have
\[
\int_{A} f_A^{n*}\omega_A^k \wedge \omega_A^{g-k}
\asymp \| \Lambda^{k,k} u_A(f)^n\|
~;\]
and~\cite{MR3043585} or \cite{MR3059849} imply similarly: 
\[
\int_{\PP^r_{\CC}} f_T^{n*}\omega_T^k \wedge \omega_T^{r-k}
\asymp \| \Lambda^k u_T(f)^n\|~.
\]
Choose a fundamental domain $\Pi(G)$ for the action of $\bar{\Lambda}$ in $X = V/\Lambda_W$. 
Recall that there is a canonical projection $X\to U$, and 
observe that we may choose $\Pi(G)$ as the product of $T$ with a fundamental domain $\Pi(A)$ in $U$
for the action of $\bar{\Lambda}$. Denote by $f_X\colon X\to X$ the lift of $f$
(or equivalently the map induced by $u(f)$ from $V$ onto $X$).
Note that by Lemma~\ref{lemma: differential of automorphism of semiabelian variety}
$f_X$ is equal the product map $(f_T, u_A(f))$ in the coordinates $(w_j), (z_i)$ on $T\times U$. 

We then have the following series of equalities: 
\begin{align*}
\deg_k(f^n)
&=
\int_{\bar{G}} f^{n*}\omega^k \wedge \omega^{r+g -k}
=\int_G f^{n*}\omega^k \wedge \omega^{r+g -k}
\\
&=
\int_{\Pi(G)} u(f)^{n*}(\omega_A+\omega_T)^k \wedge (\omega_A+\omega_T)^{r+g -k}
\\
&=
\sum_{j,l} {k \choose j} {r+g-k \choose l} \int_{\Pi(G)} u_A(f)^{n*}\omega_A^j \wedge  f_T^{n*}\omega_T^{k-j} \wedge \omega_A^l \wedge \omega_T^{r+g-l}
\\
&=
\sum_{j=0}^k {k \choose j} {r+g-k \choose g-j} \int_{\Pi(A)} u_A(f)^{n*}\omega_A^j  \wedge \omega_A^{g-j} \, \int _T  f_T^{n*}\omega_T^{k-j} \wedge \omega_T^{r-j}
\\
&\asymp
\max_{0\le j\le k} \deg_j(f_A^n) \, \deg_{k-j}(f_T^n)
\end{align*}
which concludes the proof.
\end{proof}

Denote by $\tau_1, \dots, \tau_r$ the eigenvalues counted with multiplicities of $u_T(f)$ and by $\alpha_1,\dots,\alpha_g$ the eigenvalues of $u_A(f)$. Reorder them in the following way:
\begin{align*}
 &|\tau_1|\geqslant |\tau_2|\geqslant \dots \geqslant |\tau_r|;\\
 &|\alpha_1|\geqslant |\alpha_2|\geqslant \dots\geqslant |\alpha_g|.
\end{align*}

The previous theorem can be used to describe the degree growth in codimension $1$. 
\begin{corollary} \label{corollary: growth of first degree on semi-abelian variety}
 Let $f\colon G\to G$ be an automorphism of a semi-abelian variety $G$. Then in the above notation the first dynamical degree of $f$ is equal to:
 \begin{equation*}
  \lambda_1(f) = \max\{|\tau_1|, |\alpha_1|^2\}.
 \end{equation*}
 Moreover, if $ \lambda_1(f) = 1$, then $ \deg_1(f^n) \asymp n^d$ with $d= {\max\{{j_T - 1}, {2(j_A - 1)}\}}$
 where $j_T$ and $j_A$ denote the unipotent index of $u_T(f)$ and $u_A(f)$ respectively.
\end{corollary}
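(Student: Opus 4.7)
The plan is to apply Theorem~\ref{theorem: dynamical degrees on semi-abelian variety} with $k=1$ and reduce everything to elementary Jordan-form estimates for the linear operators $u_T(f)$ and $u_A(f)$, followed by an application of Kronecker's theorem on algebraic integers.

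Setting $k=1$ in Theorem~\ref{theorem: dynamical degrees on semi-abelian variety}, the sum over $j\in\{0,1\}$ gives
\[
\deg_1(f^n) \asymp \max\bigl\{\, \|u_T(f)^n\|,\ \|\Lambda^{1,1} u_A(f)^n\|\,\bigr\}.
\]
For the first term, I would use the standard fact from linear algebra that for any endomorphism $M$ of a finite-dimensional complex vector space,
\[
\|M^n\| \asymp n^{j(M)-1}\,\rho(M)^n,
\]
where $\rho(M)$ is the spectral radius and $j(M)$ is the size of the largest Jordan block attached to an eigenvalue of maximal modulus. Applied to $u_T(f)$ this yields $\|u_T(f)^n\|\asymp n^{j_T-1}|\tau_1|^n$. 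For the second term, I would observe that $\Lambda^{1,1}V^*=V^*\otimes\overline{V^*}$, so restricted to the $U$-component, the action of $u_A(f)$ on $\Lambda^{1,1}U^*$ is $u_A(f)^{*}\otimes\overline{u_A(f)^{*}}$. Therefore its operator norm is comparable to $\|u_A(f)^n\|^{2}$, which by the same formula is $\asymp n^{2(j_A-1)}|\alpha_1|^{2n}$. Putting the pieces together,
\[
\deg_1(f^n) \asymp \max\bigl\{\, n^{j_T-1}|\tau_1|^n,\ n^{2(j_A-1)}|\alpha_1|^{2n}\,\bigr\},
\]
from which the first formula $\lambda_1(f)=\max\{|\tau_1|,|\alpha_1|^2\}$ follows immediately by taking $n$-th roots.

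For the polynomial-growth statement, suppose $\lambda_1(f)=1$, so $|\tau_1|\le 1$ and $|\alpha_1|\le 1$. The key step is to upgrade these inequalities to equalities. Since $u_T(f)$ preserves the lattice $\Lambda_W\subset W$ and $f$ is invertible, it lies in $\GL(r,\ZZ)$; its eigenvalues are algebraic integers whose reciprocals are also algebraic integers, so Kronecker's theorem forces them to be roots of unity, giving $|\tau_1|=1$. Likewise $u_A(f)$ preserves the integral lattice $\bar{\Lambda}\subset U$ and hence defines an element of $\GL(2g,\ZZ)$ on $U_\RR$; its complex eigenvalues $\alpha_1,\overline{\alpha_1},\dots,\alpha_g,\overline{\alpha_g}$ are algebraic integers of absolute value $\le 1$ with product $\pm 1$, so again Kronecker gives $|\alpha_1|=1$. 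Substituting into the asymptotic above yields $\deg_1(f^n)\asymp n^{\max\{j_T-1,2(j_A-1)\}}$ as required.

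The work is essentially mechanical once Theorem~\ref{theorem: dynamical degrees on semi-abelian variety} is in hand. The only mildly subtle step is identifying $\Lambda^{1,1}u_A(f)^n$ with $u_A(f)^n\otimes\overline{u_A(f)^n}$ in order to read off the correct Jordan exponent $2(j_A-1)$ rather than $j_A-1$; the conceptual punchline is that cohomology classes on the abelian factor are of type $(1,1)$ and so ``double up'' the unipotent losses, whereas cohomology on the toric factor is of type $(1,1)$ coming from a single real structure and thus contributes only $j_T-1$. No serious obstacle is expected.
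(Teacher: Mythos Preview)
Your proof is correct and follows exactly the natural deduction from Theorem~\ref{theorem: dynamical degrees on semi-abelian variety}; the paper in fact states the corollary without proof, and what you have written is precisely the intended argument. One cosmetic remark: for the step $|\tau_1|=|\alpha_1|=1$ you do not really need Kronecker's theorem, since $\prod_i|\tau_i|=|\det u_T(f)|=1$ together with $|\tau_i|\le|\tau_1|\le1$ already forces every $|\tau_i|=1$ (and similarly for the $\alpha_j$); invoking Kronecker gives the stronger conclusion that the eigenvalues are roots of unity, which is consistent with the ``unipotent index'' terminology but not strictly needed for the growth estimate.
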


\begin{corollary}\label{corollary: dynamical degrees on semi-abelian variety}
 Let $f\colon G\to G$ be an automorphism of a semi-abelian variety $G$. In the above notation~$f$ has the following dynamical degrees:
 \[
  \lambda_j(f)
  = \max_{k+l =j, 0\le k \le r, 0\le  l\le g}
  \prod_{m=1}^k |\tau_m| \times
  \prod_{n = 1}^l |\alpha_n|^2 
  \]
\end{corollary}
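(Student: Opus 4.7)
The plan is to derive the formula directly from Theorem~\ref{theorem: dynamical degrees on semi-abelian variety} by computing spectral radii of the induced maps on exterior powers. First I would take $n$-th roots in the asymptotic equivalence
\[
\deg_j(f^n) \asymp \max_{0\le i \le j}\bigl\{ \| \Lambda^{i,i} u_A(f)^n\|\cdot \|\Lambda^{j-i} u_T(f)^n\|\bigr\}
\]
and let $n\to\infty$. By Gelfand's formula each factor $\|\Lambda^{i,i}u_A(f)^n\|^{1/n}$ and $\|\Lambda^{j-i}u_T(f)^n\|^{1/n}$ converges to the corresponding spectral radius, so since the maximum is taken over finitely many $i$, we obtain
\[
\lambda_j(f) \;=\; \max_{0 \le i \le j}\; \rho\bigl(\Lambda^{i,i}u_A(f)\bigr)\cdot \rho\bigl(\Lambda^{j-i}u_T(f)\bigr)~.
\]

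Second, I would compute each spectral radius in terms of the eigenvalues. The endomorphism $u_T(f)$ of $W\cong\CC^r$ has eigenvalues $\tau_1,\dots,\tau_r$ ordered by decreasing modulus, so the eigenvalues of $\Lambda^{j-i}u_T(f)$ are products of $j-i$ distinct $\tau_m$'s, and its spectral radius equals $\prod_{m=1}^{j-i}|\tau_m|$ whenever $j-i\le r$ (and the corresponding term is zero when $j-i>r$). For the abelian part, one writes $\Lambda^{i,i}U^* \hookrightarrow \Lambda^i U^*\otimes \Lambda^i\overline{U^*}$ on which $u_A(f)$ acts by $\Lambda^i u_A(f)\otimes \Lambda^i\overline{u_A(f)}$; its eigenvalues are products $\prod_{s\in S}\alpha_s\cdot\prod_{t\in T}\overline{\alpha_t}$ with $|S|=|T|=i$, whose maximum modulus is achieved for $S=T=\{1,\dots,i\}$ and equals $\prod_{n=1}^i|\alpha_n|^2$ (with the convention that this product is $1$ for $i=0$, and that the exterior power vanishes when $i>g$).

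Third, substituting these formulas back and relabelling $k:=j-i$ (torus index) and $l:=i$ (abelian index), subject to the constraint $k+l=j$, $0\le k \le r$, $0\le l\le g$, yields exactly
\[
\lambda_j(f) \;=\; \max_{\substack{k+l=j\\ 0\le k\le r,\ 0\le l\le g}} \;\prod_{m=1}^k |\tau_m| \times \prod_{n=1}^l |\alpha_n|^2~,
\]
which is the claimed formula. No serious obstacle is expected: all the heavy lifting has been carried out in Theorem~\ref{theorem: dynamical degrees on semi-abelian variety}, and the remaining work consists only of identifying spectral radii of exterior powers, where the only mildly subtle point is that the $(i,i)$-piece of the cotangent exterior algebra of the abelian factor yields the squared factor $|\alpha_n|^2$ via complex conjugation, whereas the toric factor, whose relevant cohomology in $(\PP^1)^r$ is generated by fibre classes, contributes only $|\tau_m|$.
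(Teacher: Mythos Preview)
Your proposal is correct and is precisely the argument the paper has in mind: the corollary is stated without proof, as it follows immediately from Theorem~\ref{theorem: dynamical degrees on semi-abelian variety} by taking $n$-th roots, applying the spectral radius formula, and identifying the eigenvalues of $\Lambda^{k}u_T(f)$ and $\Lambda^{l,l}u_A(f)$ exactly as you do. The only cosmetic remark is that $\Lambda^{i,i}U^*$ is actually equal to $\Lambda^i U^*\otimes \Lambda^i\overline{U^*}$ (not merely embedded), so there is no need to worry about whether the maximal eigenvalue is realized on a subspace.
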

In the previous formula, we adopt the convention that when $k=0$ or $l=0$ then
the product is over the emptyset, hence equals $1$. 
\begin{remark}
The computation of dynamical degrees of endomorphisms of semi-abelian surfaces was also explored in Abboud's thesis, see~\cite[\S 4.3.3]{abboud}.
\end{remark}

\subsection{Decomposition of automorphisms of abelian varieties}
In this section, we suppose that $G = V/\Lambda$ is an abelian variety so that $\Lambda$
generates the complex vector space $V$ as a real vector space. 

\smallskip

Recall that an abelian variety is simple when it does not contain any non-trivial abelian subvarieties. 
Let $L\to G$ be any ample line bundle on $G$. The map sending $a\in G$ to $t_a^*L \otimes L^{-1}$ (where $t_a$ denotes
the translation by $a$)  defines an isogeny $\phi_L\colon G \to G^{\vee} := \Pic^0(G)$ (see~\cite[II.8 Theorem 1]{MR2514037}). 
If $H$ is an abelian subvariety of $G$, and $\imath \colon H \to G$ is the canonical embedding map, we may 
consider the fiber  $H'$ containing $0$ of the map $G \to H^\vee$ given by $\imath^{\vee} \circ \phi_L$. 
Then $H\cap H'$ is finite, and the addition map $H\times H' \to G$ is an isogeny, see~\cite[p.127]{Birkenhake-Lange_AV}.

 \begin{theorem}\label{thm:splitting}
Pick  any algebraic group isomorphism $f\in \Aut_\bullet(G)$. Then there exist two canonically defined 
$f$-invariant abelian subvarieties $G_0, G_1$
satisfying the following conditions:
\begin{enumerate}
\item
the addition map $\mu\colon G':= G_0\times G_1 \to G$ is an isogeny;
\item 
all eigenvalues of $u(f|_{G_0})$ are roots of unity;
\item 
no eigenvalue of $u(f|_{G_1})$ is a root of unity.
\end{enumerate}
\end{theorem}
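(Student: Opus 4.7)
The plan is to factor the characteristic polynomial of $f$ acting on the lattice $\Lambda$ into a cyclotomic and a non-cyclotomic part, and to use this factorization to split both $V$ and $\Lambda$ into $f$-invariant pieces whose quotients will realize $G_0$ and $G_1$.

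Concretely, let $\chi_f(t) \in \ZZ[t]$ denote the characteristic polynomial of $u(f)$ acting on $\Lambda$; it is monic of degree $2g$. I would factor $\chi_f = \chi_0 \cdot \chi_1$ in $\ZZ[t]$, where $\chi_0$ is the product (with multiplicities) of all cyclotomic polynomials dividing $\chi_f$, and $\chi_1$ shares no root with any cyclotomic polynomial. Both factors are monic integral polynomials, coprime in $\QQ[t]$, so there exist $A,B\in\ZZ[t]$ and $N\in\ZZ\setminus\{0\}$ with $A\chi_0 + B\chi_1 = N$.

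I then set $V_i := \ker\chi_i(u(f))$. Because $u(f)$ is $\CC$-linear these are complex subspaces; and since $\chi_f(u(f))$ vanishes on $\Lambda$ (Cayley--Hamilton), it vanishes on the $\RR$-span $V$, so coprimality yields $V = V_0 \oplus V_1$. Let $\Lambda_i := \Lambda \cap V_i$. The Bezout identity forces $N\Lambda \subset \Lambda_0 + \Lambda_1$, so $\Lambda_0 \oplus \Lambda_1$ has finite index in $\Lambda$, which in turn forces $\rk \Lambda_i = 2 \dim_\CC V_i$. Hence each quotient $G_i := V_i/\Lambda_i$ is a closed complex subtorus of $G$. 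Restricting the Hermitian form associated to the polarization on $G$ to $V_i$ yields a positive definite form whose imaginary part is integral on $\Lambda_i$, so each $G_i$ inherits a polarization and is thus an abelian subvariety of $G$, manifestly $f$-invariant by construction.

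To verify the conclusions: the addition map $\mu\colon G_0\times G_1\to G$ has differential given by the canonical isomorphism $V_0\oplus V_1\simeq V$, so $\mu$ is \'etale with finite kernel $\Lambda/(\Lambda_0\oplus\Lambda_1)$, which proves (1). By Lemma~\ref{lemma: differential of automorphism of semiabelian variety}, $u(f|_{G_i})$ is just the restriction $u(f)|_{V_i}$, whose minimal polynomial divides $\chi_i$; this immediately yields (2) and (3). Canonicity follows from the uniqueness of the factorization $\chi_f=\chi_0\chi_1$: the $V_i$ (hence the $G_i$) are defined intrinsically as kernels of polynomial operators depending only on $f$. I expect the sole technical subtlety to be the refinement from Bezout in $\QQ[t]$ to an identity with coefficients in $\ZZ[t]$, which is essential to guarantee that $\Lambda_0\oplus\Lambda_1$ has full rank in $\Lambda$ and hence that each $G_i$ is compact; the remaining verifications are standard linear algebra together with the fact that any closed complex subtorus of a polarized abelian variety is itself an abelian variety.
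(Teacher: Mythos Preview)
Your proposal is correct and follows essentially the same approach as the paper: factor the characteristic polynomial of $u(f)$ on the lattice into its cyclotomic and non-cyclotomic parts, and take the corresponding kernel subspaces. The only cosmetic difference is the direction of the argument: the paper splits $\Lambda_\QQ$ first and then uses commutation of $u_r$ with the complex structure $J$ on $\Lambda_\RR$ to deduce that the pieces are complex subspaces of $V$, whereas you split $V$ first (immediate since $u(f)$ is $\CC$-linear) and then use the integral Bezout identity $A\chi_0+B\chi_1=N$ to deduce that $\Lambda_0\oplus\Lambda_1$ has finite index in $\Lambda$. These are the same argument read from opposite ends; your version is arguably more explicit about why each $G_i$ is compact and why $\mu$ is an isogeny. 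The invocation of Lemma~\ref{lemma: differential of automorphism of semiabelian variety} is harmless but unnecessary here, since $u(f|_{G_i})=u(f)|_{V_i}$ is immediate from the fact that $V_i\hookrightarrow V$ covers $G_i\hookrightarrow G$.
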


\begin{remark}
Note that $\lambda_1(f|_{G_0})=1$, and for any $f$-invariant abelian subvariety $H$ of $G_1$
we have  $\lambda_1(f|_{H})>1$. Also when $G$ is simple, then either $G_0$ or $G_1$ is reduced to a point. 
\end{remark}

\begin{proof}
Consider the complex linear map $u(f) \colon V \to V$ as above. Since $u(f)$ preserves the lattice $\Lambda$, it also induces
a $\QQ$-linear map $u_r \colon \Lambda_\QQ \to \Lambda_\QQ$. Since $\Lambda$ is co-compact in $V$, the embedding $\Lambda \subset V$ induces an isomorphism of
real vector spaces $\phi\colon \Lambda_\RR \to V$. 

Let $J(\lambda) :=  \phi^{-1}( i \phi(\lambda))$. It defines an endomorphism $J\colon \Lambda_\RR\to \Lambda_\RR$ such that $J^2= - \id$,
and since $u(f)$ is $\CC$-linear, we have $J\circ u_r= u_r\circ J$. 

Let $\chi_r\in \ZZ[T]$ be the characteristic polynomial of $u_r$. 
Then $\chi_r$ is a polynomial of degree $2g$, which can be decomposed
as a product $\chi_r = P Q$ where $P\in \ZZ[T]$ is a product of cyclotomic factors, and $Q\in \ZZ[T]$ does not vanish at any root of unity.
We obtain a splitting $\Lambda_\QQ = \ker P(u_r) \oplus \ker Q(u_r)$, and since $u_r$ and $J$ commute,
the real spans of both $\ker P(u_r)$ and $\ker Q(u_r)$ in $\Lambda_\RR$ are $J$-invariant. 
Write $V_0 = \phi(\ker P(u_r))$ and $V_1 = \phi(\ker Q(u_r))$. 
It follows that $V_0$ and $V_1$ are $u(f)$-invariant complex vectors spaces such that $V= V_0 \oplus V_1$
such that $\Lambda \cap V_i$ is a lattice in $V_i$ for each $i=0,1$. 
The theorem follows with $G_i = V_i/(\Lambda_i\cap V_i)$. 
\end{proof}


\section{Families of abelian varieties and Néron models} \label{sec:neron}
In this section, we discuss Néron models in the context of degenerating families of 
complex abelian varieties, and give a proof of Theorem \ref{Theorem: criterion for irreducible family of AV}.

\subsection{N\'eron models}
In order to talk about Néron models, we have to make a slight twist in the terminology used in the introduction.

A  family (of polarized manifolds) over the complex unit disk is 
 a smooth complex manifold $\X$, with a surjective holomorphic map 
 $\pi\colon \X \to \DD$  which is a proper submersion over $\DD^*$, and 
carrying a relatively ample line bundle $\L \to \X$. 

When $X_t= \pi^{-1}(t)$ is an abelian variety for each (or some) $t\neq 0$, then we say that it is 
a  family of polarized abelian varieties. The family of neutral elements of $X_t$
gives a canonical holomorphic section over $\DD^*$, that we refer to the zero section. 

We allow $X_0$ to be non-compact. Observe that $X_0$ is compact if and only if $\pi$ is proper, in which case
we say the family is proper. 
We say that a polarized family is smooth when the map $\pi$ is a submersion over $\DD$. 
A proper smooth  family of polarized abelian varieties is simply a deformation of abelian varieties
defined over $\DD$.
We write $\X^* = \X\setminus X_0 = \pi^*(\DD^*)$.

A model $\Y$ of $\X$ is a  family of polarized manifolds $\varpi \colon \Y \to \DD$, 
and a meromorphic map $\phi \colon \Y \dto \X$ such that 
$\pi' = \pi \circ \phi$ and $\phi$ is a biholomorphism from $\Y^*$ onto $\X^*$. 

\medskip

Let $R$ be the ring of germs of holomorphic functions at the origin $0\in \DD$. 
With the $t$-adic norm, $|\sum a_n t^n|_t = \exp(- \min\{n, a_n\neq 0\})$, it becomes
a discrete valued ring, whose completion is the ring of formal power series 
$(\Pow,|\cdot|_t)$. The fraction field $K$ of $R$ is a valued field whose completion is $(\Lau, |\cdot|_t)$.

If  $\pi\colon \X \to \DD$ is a  family of polarized abelian varieties, there exists $r>0$ and an embdedding $\imath \colon \X \hookrightarrow \PP^N_\CC \times \DD_r$
such that $\pr_2 \circ \imath = \pi$, where $\DD_r :=\{ |t| <r\}$ and $ \pr_2$ is the projection onto the second factor. It follows that $\X$ is defined by a finite family of homogeneous polynomials
with coefficients in $R$. This family defines a projective scheme flat over $\Spec R$ that we denote by 
$\X_R$. Its generic fiber $X_K$ is a smooth abelian $K$-variety.
 
\begin{theorem}\label{thm:neron-model}
 Let $X_K$ be any abelian variety over $K$. 
 There exists an $R$-scheme $\N_R$, flat over $\Spec R$
 and an isomorphism $\phi \colon N_K \to X_K$ such that
 $\X$ satisfies the following property:
 \begin{itemize}
 \item  for each smooth $R$-scheme $\Y$ and for any $K$-morphism 
$ f\colon Y_K \to X_K$, 
  there exists a unique $R$-morphism $\varphi\colon \Y\to  \N_R$ which extends $f$, i.e., such that $f=\phi_K\circ \varphi_K$.
\end{itemize}
\end{theorem}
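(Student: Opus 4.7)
The plan is to reduce the statement to the main existence theorem for Néron models of abelian varieties over a discrete valuation ring, as established in \cite{BLR_Neron_models}.

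The ring $R$ of germs of holomorphic functions at $0\in \DD$ is a Henselian excellent discrete valuation ring with uniformizer $t$, perfect residue field $\CC$, and fraction field $K$; Henselianity follows from the holomorphic implicit function theorem, and no completeness assumption is required. The $K$-scheme $X_K$ is the generic fiber of the flat projective $R$-scheme $\X_R$ obtained by algebraizing the analytic family $\X \to \DD$ via the relatively ample line bundle $\L$.

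Granting this, \cite[\S1.4, Thm.~3]{BLR_Neron_models} asserts that every abelian variety over the fraction field of a DVR with perfect residue field admits a Néron model, and I would recall the three-step construction. First, one chooses any flat projective $R$-model of $X_K$, as above. Second, one applies Néron's smoothening procedure -- iteratively blowing up the non-smooth locus of the special fiber -- to produce a smooth $R$-model whose $R$-points surject onto $X_K(K)$; properness of $X_K$ provides Grothendieck's boundedness condition needed here. Third, one takes the largest open subscheme of this smooth model on which the group law extends; one then checks that this open subscheme is the Néron model $\N_R$. The universal mapping property is obtained as follows: given $f\colon Y_K \to X_K$ with $\Y \to \Spec R$ smooth, one takes the schematic closure $\Gamma$ of the graph of $f$ inside $\Y \times_R \N_R$; the first projection $\Gamma \to \Y$ is an isomorphism by the pointwise Néron property applied to the special fiber of $\Y$ combined with smoothness of $\Y$, yielding the desired $\varphi\colon \Y \to \N_R$. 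Uniqueness follows from separatedness of $\N_R$ over $R$ together with $R$-flatness of $\Y$.

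The only conceptual subtlety -- really the sole item beyond citing \cite{BLR_Neron_models} -- is to ensure that the analytic origin of $R$ is immaterial. Since Néron's smoothening is entirely algebraic and valid over any DVR with perfect residue field, the only role of the analytic structure is to identify $R$ as an excellent Henselian DVR. Alternatively, one can build the model directly over the completion $\Pow$, where the proof of BLR literally applies, and descend to $R$ by Artin approximation, exploiting excellence and Henselianity of $R$. I do not foresee a real obstacle beyond bookkeeping; the statement is a direct application of standard theory.
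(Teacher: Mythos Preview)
Your proposal is correct and aligns with the paper's own treatment: the paper does not give a proof of this theorem at all, but simply states ``We refer to~\cite{BLR_Neron_models} for a proof of this result.'' Your sketch of the BLR construction and your remarks on why the analytic nature of $R$ is immaterial (it being a Henselian excellent DVR with perfect residue field $\CC$) are accurate and go beyond what the paper provides, but the underlying approach is identical.
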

The property characterizing $\N_R$ is called the N\'eron mapping property, and $\N_R$
is then called a Néron model.
It implies the uniqueness (up to isomorphism) of the Néron model. 
We refer to~\cite{BLR_Neron_models} for a proof of this result. 
It is crucial to keep in mind that in general the special fiber $\N_s$ is not projective. 

\begin{corollary}\label{cor:analyticneron}
Let  $\pi\colon \X \to \DD$ be a  family of polarized abelian varieties. 
Then there exists a  family of polarized abelian varieties $\pi_\N\colon \N\to \DD$, 
and an isomorphism $\phi_N\colon\N^*\to  \X^*$ satisfying $\pi = \pi_\N\circ \phi_N$, 
such that the following holds. 

For any smooth polarized family $\varpi \colon \Y \to \DD_r$ with $0<r<1$, and for any holomorphic map 
$f\colon \Y^*\to \X^*$ such that $\varpi = f\circ \pi$, there exists a unique
holomorphic map $\varphi \colon \Y\to \N$ such that 
 $f=\phi_N\circ \varphi $.
 \end{corollary}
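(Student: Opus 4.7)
The plan is to deduce the analytic statement from the algebraic Néron mapping property (Theorem~\ref{thm:neron-model}) via analytification and a gluing argument.

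\smallskip

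\noindent\textbf{Construction of $\N$.} Viewing $\pi\colon \X\to \DD$ near $t=0$ as a smooth projective $R$-scheme $\X_R$ (embedding via the polarization into $\PP^N\times \DD_\epsilon$ for small $\epsilon$, and taking germs of the defining equations at $0$), I would first apply Theorem~\ref{thm:neron-model} to the abelian $K$-variety $X_K$ to obtain the algebraic Néron model $\N_R$ over $\Spec R$ together with $\phi\colon N_K\simeq X_K$. Since the residue characteristic of $R$ is zero and $X_K$ is an abelian variety, the component group of the special fiber of $\N_R$ is finite (see~\cite{BLR_Neron_models}), so $\N_R$ is of finite type over $R$. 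Because $R$ consists of convergent power series, $\N_R$ admits an analytification $\N^{\an}$ over some small disk $\DD_\epsilon$; it is smooth, and $\phi$ analytifies to an isomorphism $\N^{\an}|_{\DD_\epsilon^*}\simeq \X|_{\DD_\epsilon^*}$. I then glue $\N^{\an}$ with $\X^*$ over $\DD^*$ along this identification to obtain the required analytic family $\pi_\N\colon \N\to \DD$ together with $\phi_N\colon \N^*\simeq \X^*$.

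\smallskip

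\noindent\textbf{Universal property.} Given $\varpi\colon \Y\to \DD_r$ and $f\colon \Y^*\to \X^*$ as in the statement, I would use the relative polarization on $\Y$ to embed it into $\PP^M\times \DD_{r'}$ for some $M$ and $r'<r$; the germs of the defining equations at $0$ have coefficients in $R$, producing a smooth projective $R$-scheme $\Y_R$ whose analytification is the germ of $\Y$ along $Y_0$. The germ at $0$ of $f$ corresponds to a $K$-morphism $Y_K\to X_K$, which by the Néron property of $\N_R$ extends uniquely to an $R$-morphism $\Y_R\to \N_R$; analytifying this produces a holomorphic map $\varphi_0\colon U\to \N$ on a neighborhood $U$ of $Y_0$ in $\Y$, satisfying $\phi_N\circ \varphi_0 = f$ on $U^*$ by construction. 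I would then glue $\varphi_0$ with $\phi_N^{-1}\circ f$ on $\Y^*$ to produce the desired map $\varphi\colon \Y\to \N$ with $f = \phi_N\circ \varphi$. Uniqueness follows since any two such extensions coincide on the dense open set $\Y^*$.

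\smallskip

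\noindent\textbf{Main obstacle.} The most delicate point is the passage from algebra to analysis near the central fiber: one must ensure that the germ of a smooth proper polarized analytic family is genuinely the analytification of a smooth projective $R$-scheme, and that the analytification of the algebraic $R$-morphism $\Y_R\to \N_R$ produces a holomorphic map on some neighborhood of $Y_0$. Projectivity makes the source $\Y_R$ easy to handle, but $\N_R$ is not proper, so one cannot invoke Serre's GAGA directly; instead, one covers $\N_R$ by finitely many affine opens (it is of finite type) and checks analyticity of the extension on each piece, using crucially that $R$ consists of convergent rather than merely formal power series. Once this dictionary between germs of analytic families and $R$-schemes of finite type is in place, the gluing, uniqueness, and extension of $\N$ from $\DD_\epsilon$ to the full disk $\DD$ are formal.
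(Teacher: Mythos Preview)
Your proposal is correct and follows essentially the same approach as the paper: analytify the algebraic Néron model over a small disk, glue with $\X^*$ over the punctured disk, and for the universal property algebraize the smooth family $\Y$ near $0$, apply the algebraic Néron mapping property, and analytify the resulting morphism. The paper is terser and cites \cite[Satz~3.1]{MR0404678} (Bingener) for precisely the analytic-to-algebraic smoothness passage that you identify as the main obstacle; your more explicit discussion of why non-properness of $\N_R$ prevents a direct GAGA argument and how finite-type-ness saves the day is a welcome elaboration of what the paper leaves implicit.
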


 \begin{remark}
 We shall call $\N$ as in the previous corollary the analytic Néron model  (or Néron model for short). 
 Observe that the extension property applied to the addition law on $\X^*\times \X^*$ implies that 
 $\N$ is also a commutative complex algebraic group. 
 
 The extension property also implies that any holomorphic section $\sigma\colon \DD \to \X$
 of $\pi$ (i.e., satisfying $\pi \circ \sigma(t) =t$) induces a holomorphic map
 $\sigma_\N \colon \DD \to \N$ such that 
 $\phi_\N \circ \sigma_\N = \sigma$.
  \end{remark}
  
  When the Néron model $\N$ is relatively projective (i.e., $\N_0$ is projective), then 
  we shall say that $\X$ is a \emph{non-degenerating family}. 
  \begin{remark}
When $t$ varies in $\DD^*$, then $(X_t, \L_t)$ forms a holomorphic family of polarized abelian varieties of a fixed type $D$ so that 
  we have a holomorphic map from $\DD^*$ to the moduli space of polarized abelian varieties of type $D$ (see, e.g.,~\cite[p.219]{Birkenhake-Lange_AV}).
  The family is non-degenerating if and only if this map extends holomorphically through $0$.
\end{remark}

 \begin{remark}
 In the case $g=1$ and $\X$ is a family of elliptic curves, then the Néron model can be obtained as follows, see~\cite{MR1312368}.
 First, after applying the minimal model program, we may suppose that $\X$ is relatively minimal, i.e., $X_0$ does not contain any smooth rational curve
of self-intersection $-1$. 
The divisor $\pi^*(0)$ is in general not reduced, and can be written as a sum
$\sum E_i + \sum b_j F_j$ where $E_i, F_j$ are the irreducible components of $X_0$
and $b_j\ge 2$ for all $j$. 
Then $\N$ can be taken as the smooth part of $\X \setminus \bigcup F_j$. 
We refer to~\cite[\S V.7]{MR2030225} for the description by Kodaira of all possible divisors $\pi^*(0)$ as above. 
 \end{remark}

\begin{proof}[Proof of Corollary~\ref{cor:analyticneron}]
Theorem~\ref{thm:neron-model} yields a polarized family of abelian varieties $\N\to\DD_r$
for some $0<r<1$. The isomorphism $\phi\colon N_K\to X_K$ defines a biholomorphism 
$\varphi\colon \N^*\to \pi^{-1}\{0 < |t| < r\}$ (possibly after reducing $r$), such that $\varphi^*\L$
is the polarization on $\N^*$. 

Glue $\N$ and $\X^*$ along $\pi^{-1}\{0 < |t| < r\}$, where we identify a point $z\in \N^*$ with $\varphi(z)\in  \pi^{-1}\{0 < |t| < r\}$.
The resulting space is a  family of polarized abelian varieties defined over $\DD$. 
To simplify notation we shall keep the same notation $\N$ for this family.

Suppose now that we are given a smooth polarized family $\varpi \colon \Y \to \DD_r$ with $0<r<1$, and a holomorphic map 
$f\colon \Y^*\to \X^*$ such that $\varpi = f\circ \pi$.  Since $\varpi$ is smooth in the analytic category, the associated $R$-scheme
is smooth (in the algebraic sense, see~\cite[Satz 3.1]{MR0404678}). 
By Néron extension property,  we can find $\rho<r$ and an analytic map $\varphi \colon \varpi^{-1}\{0<|t|<\rho\}\to \N$ such that 
 $f=\phi_N\circ \varphi$. By analytic continuation, $\varphi$ extends to $\Y$. 
 \end{proof}

Finally we shall need the following result. 
Suppose $\pi\colon\X\to \DD$ is any proper  family of  polarized abelian varieties, and pick any integer $n\in \NN^*$. 
A base change of order $n$ is a proper  family of polarized abelian varieties $\pi_n\colon \X_n \to \DD$, with a meromorphic map
$\phi\colon \X_n \dto \X$
such that the following square is commutative: 
\[\begin{tikzcd}[ampersand replacement=\&]
	{\X_n} \& \X \\
	{\DD} \& {\DD}
	\arrow[dashrightarrow, "\phi", from=1-1, to=1-2]
	\arrow["\pi", from=1-2, to=2-2]
	\arrow["\pi_n", from=1-1, to=2-1]
	\arrow["{t\mapsto t^n}", from=2-1, to=2-2]
\end{tikzcd}
\]  

\begin{theorem}\label{theorem: compactification of Neron model}
  Let $\pi\colon \X \to \DD$ be any proper polarized family of abelian varieties of dimension $g$.
  Then there exists a finite base change $\pi'\colon \X' \to \DD$ of order $n\in\NN^*$ such that:
  \begin{enumerate}
   \item[$(1)$] The fiber $\X'_0$ is reduced;
   \item[$(2)$] 
the irreducible component $G$ of the central fiber  $\N'_0$ of the Néron model of $\X'$ containing the neutral element is a semi-abelian variety, 
and the quotient space $\N'_0/G$ is a finite group.
  \end{enumerate}
Moreover, the following property holds: 
  \begin{itemize}
\item[$(3)$]
the Néron model $\N'$ is proper 
if and only if
$G$ is an abelian variety.
\end{itemize}
\end{theorem}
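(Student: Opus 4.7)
The plan is to combine Grothendieck's semi-stable reduction theorem for the abelian variety $X_K$ over $K=\Frac R$ with the Kempf--Knudsen--Mumford--Saint-Donat (KKMS) semi-stable reduction theorem applied to the flat projective family $\X \to \DD$.

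First I would apply Grothendieck's theorem (see \cite{BLR_Neron_models}) to obtain an integer $n_0\geq 1$ such that, after base change $t\mapsto t^{n_0}$, the abelian variety $X_{K'}$ over $K':=K(t^{1/n_0})$ acquires semi-stable reduction: the identity component $G$ of the central fiber of its Néron model $\N'$ is a semi-abelian variety. Once semi-stability is achieved, it is preserved under any further base change. With semi-stable reduction in hand the Néron model is of finite type (a standard consequence), so the component group $\N'_0/G$ is automatically finite, establishing (2); each connected component of $\N'_0$ is a $G$-torsor and hence itself a semi-abelian variety.

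To arrange (1) simultaneously, I would then apply KKMS to the resulting family: after a further base change $t\mapsto t^m$ and a birational modification supported over the central fiber, $\X'_0$ becomes a reduced simple normal crossings divisor. This modification is an isomorphism over $\DD^*$, so the generic fiber, and hence the Néron model, are unaffected; combined with the persistence of semi-stability under the extra $t\mapsto t^m$, both (1) and (2) now hold for the resulting $\X'$. Restoring relative ampleness of the polarization after the modification is a standard application of relative MMP.

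For (3), one direction is clear: if $\N' \to \DD$ is proper, then $\N'_0$ is proper, so the open subgroup $G$ is a proper semi-abelian variety; by the Chevalley decomposition its split-torus part must vanish (a non-trivial split torus being affine and not proper), so $G$ is abelian. Conversely, if $G$ is abelian, it is projective; since $\N'_0/G$ is finite, every connected component of $\N'_0$ is projective, so $\N'_0$ is proper, and as $\N' \to \DD$ is flat and of finite type, properness of the central fiber propagates to a neighborhood of $0\in\DD$, hence to $\N'\to\DD$. I expect the main obstacle to lie in the compatibility bookkeeping: coordinating the two base changes and modifications so that all three properties hold simultaneously, and verifying that the analytic Néron model from Corollary~\ref{cor:analyticneron} agrees with Grothendieck's algebraic Néron model after the KKMS modification of the total space.
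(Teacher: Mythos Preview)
Your outline matches the paper's proof, which simply cites \cite[\S 7.4, Theorem~1]{BLR_Neron_models} for (1)--(2) and \cite[\S 7.4, Theorem~5]{BLR_Neron_models} for (3); you are if anything more precise in isolating the KKMS step for (1), since reducedness of $\X'_0$ is a statement about the projective model rather than the Néron model.

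There is, however, a real gap in your converse argument for (3). The assertion that ``properness of the central fiber propagates to a neighborhood of $0\in\DD$, hence to $\N'\to\DD$'' is not valid as stated: a flat, separated morphism of finite type with proper fibers need not be proper, and you have only shown $\N'_0$ to be a finite disjoint union of abelian varieties, so you cannot even invoke a connectedness argument directly on $\N'$. The clean route (which is the content of \cite[\S 7.4, Theorem~5]{BLR_Neron_models}) is: if $G$ is abelian then $X_{K'}$ has good reduction, i.e.\ extends to an abelian scheme $\mathcal A$ over $R'$; since abelian schemes satisfy the Néron mapping property, $\N'=\mathcal A$ is proper (and the component group is a posteriori trivial). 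Alternatively, you can run a Nagata-compactification plus Stein-factorization argument on the identity component $(\N')^0$, whose fibers are geometrically connected, to show it is an abelian scheme and hence coincides with $\N'$; but this still requires more than ``propagation''.
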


\begin{proof}
The first two statements follows from the semi-stable reduction theorem of Grothendieck applied to the $R$-scheme $\X_R$, see~\cite[\S 7.4, Theorem~1]{BLR_Neron_models}.
The third statement is a consequence of~\cite[\S 7.4, Theorem~5]{BLR_Neron_models}. 
\end{proof}

Recall that it may be necessary to do a non-trivial base change for the central fiber of the Néron model to be semi-abelian.
\begin{example}
Let $A$ be an abelian variety of dimension $g$, $f, \sigma\in \Aut(A)$ two automorphisms such that 
$f(0) =\sigma(0) =0$, $\sigma^N = \id$ for some $N$. Then $f$ and $\sigma$ lifts to linear maps on the Lie algebra of $A$
hence commute, and we can form the quotient space 
$\X = (A\times \DD)/G$ where we identify
$(z,t) \sim (\sigma(z), \zeta t)$ with $\zeta$ a primitive $N$-th root of unity. 
Since $f$ and $\sigma$ commute, $f$ descends to $\X$ and defines an automorphism 
$\tilde{f} \colon \X \to \X$.

In general $\X_0$ is singular but is not birational to an abelian variety. Take for instance
$A = E^g$ with $E$ an elliptic curve, $f$ any element in $\SL(g,\ZZ)$ and 
$\sigma = \zeta z$ where $\zeta = -1$, or $\zeta$ is a primitive $3$rd (resp. $4$th)root of unity
when $E= \CC/\ZZ[j]$ (resp. $E= \CC/\ZZ[i]$).
When $g=2$, the minimal resolution of singularities of the quotient $A/G$ is a K3 surface when $\zeta =-1$. One can even obtain 
examples for which $A/G$ is a (singular) rational surface. The easiest example is to take
the simple abelian variety obtained by quotienting by 
the ring of integers of $\QQ(\zeta_5)$
where $\zeta_5$ is a primitive $5$-th root of unity admits two non conjugate complex embedding, and 
the variety $A:=\CC^2/\ZZ[\zeta_5]$ is a simple abelian variety. The multiplication by  $\zeta_5$ on $\ZZ[\zeta_5]$
induces an automorphism $g$ of order $5$, and $A/\langle g \rangle$ is rational. 
 This variety also admits an automorphism with
dynamical degree $>1$ since the group of units of $\QQ[\zeta_5]$ has rank $1$
by  Dirichlet's unit theorem. See~\cite{MR623443,MR1317527} for details.
\end{example}

\subsection{Automorphisms of families of polarized abelian varieties}
Let $\pi\colon \X\to \DD$ be a  family of polarized abelian varieties, and $f\colon \X \dto \X$
be any bimeromorphic map. By the properness of $\pi$, we necessarily have 
$\pi = \pi \circ f$. We denote by  $\Bim(\X)$ the group of bimeromorphic self-maps of $\X$, 
and by $\Aut(\X)$ its group of biholomorphisms.

Since abelian varieties do not contain rational curves, it follows that 
$\Exc(f)$ and $\Ind(f)$ are both contained in $\X_0$, and 
$f_t\colon X_t \to X_t$ is an algebraic biholomorphism for all $t\neq0$.

Recall that for each $t\neq0$, $f_t$ lifts to the universal cover $\pr_t\colon V_t \to X_t$
as an affine map $z \mapsto u(f_t)(z)+x_t$ with $x_t\in V_t$, and  $u(f_t) \in \End(V_t)$ 
so that $u(f_t)$ preserves the lattice $\Lambda_t = \pr_t^{-1}(0)$. 

Since $\Lambda_t$ and $u(f_t)$ are varying continuously (in fact holomorphically) in $t$, 
choosing a path from $t_0$ to $t_1$ determines a canonical isomorphism
$V_{t_0} \to V_{t_1}$ sending $\Lambda_{t_0}$ to $\Lambda_{t_1}$ and conjugating $u(f_{t_0})$ to $u(f_{t_1})$.
 We collect here the following two observations.

\begin{proposition}\label{prop:conjugate}
Let $f\in \Bim(\X)$, and $E$ be any irreducible component of the central fiber of the Néron model of $\X$.
Suppose that the component of $\N_0$ containing $0$ is semi-abelian, and that $f_\N(E)=E$. 

Then the characteristic polynomial of $u(f_t)$ for $t\neq0$ is identical to the one of  $u(f|E)$.
\end{proposition}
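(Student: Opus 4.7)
The approach is to reinterpret both $u(f_t)$ (for $t\neq 0$) and $u(f|E)$ as pointwise values of a single holomorphic endomorphism of the Lie algebra bundle of the N\'eron model over $\DD$, and then to show that its complex characteristic polynomial is constant across the fibers.

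First, applying Corollary~\ref{cor:analyticneron} to $f$ and $f^{-1}$, I would extend $f$ to a biholomorphic automorphism $f_\N\colon \N\to\N$. Since $\N$ is a commutative algebraic group scheme over $\DD$, translation trivializes the relative tangent bundle and yields a canonical isomorphism $T(\N/\DD)\simeq\pi_\N^*\underline{V}$, where $\underline{V}:=0_\N^*T(\N/\DD)$ is the Lie algebra bundle, a holomorphic vector bundle of rank $g$ on $\DD$. Any affine automorphism of a commutative group has translation-invariant differential (equal to the derivative of its linear part), so $df_\N$ descends to a holomorphic section $u\in\Gamma(\DD,\End(\underline{V}))$. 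By construction together with Lemma~\ref{lemma: differential of automorphism of semiabelian variety}, $u_t\in\End(V_t)$ coincides with $u(f_t)$ for $t\neq 0$, while $u_0\in\End(V_0)$ coincides with $u(f|E)$ for every $f_\N$-invariant component $E$ of $\N_0$, since such a component is a translate of the neutral semi-abelian component $G$ with canonically identified Lie algebra $V_0=\Lie(G)$.

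It thus remains to show that the complex characteristic polynomial $\chi_\CC(u_t)(T)\in\cO(\DD)[T]$ is independent of $t$. On $\DD^*$ the endomorphism $u_t$ preserves the full-rank lattice $\Lambda_t\subset V_t$, so the real characteristic polynomial $\chi_\RR(u_t)=\chi_\CC(u_t)\cdot\overline{\chi_\CC(u_t)}$ has integer coefficients; as it depends continuously on $t$ it is constant on the connected set $\DD^*$, equal to some fixed $Q\in\ZZ[T]$. For each $t\in\DD^*$ the polynomial $\chi_\CC(u_t)$ is then a monic degree-$g$ factor of $Q$ in $\CC[T]$; since such factors form a finite set while the family $t\mapsto\chi_\CC(u_t)$ varies holomorphically, it must be locally constant, hence constant on $\DD^*$. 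Continuity of the coefficients through $t=0$ extends this to all of $\DD$, yielding $\chi_\CC(u(f_t))=\chi_\CC(u_0)=\chi_\CC(u(f|E))$.

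The principal subtlety is the rank drop of the lattice at $t=0$: under the semi-abelian degeneration the integral lattice $\Lambda_t$ of rank $2g$ does not extend to a full-rank lattice in $V_0$, whose rank becomes only $2g-r$ where $r$ is the toric dimension of $G$. Consequently $\chi_\CC(u_0)$ need not have integer coefficients, and a direct ``integrality plus continuity'' argument for $\chi_\CC$ itself is unavailable. The trick of passing through $\chi_\RR$, whose integrality survives on the special fiber, and exploiting the finiteness of the set of degree-$g$ factors of a fixed polynomial in $\CC[T]$, is precisely what bridges this gap.
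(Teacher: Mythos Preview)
Your proof is correct and follows the same core strategy as the paper's: realize both linear parts as fibers of a single holomorphic family of endomorphisms over $\DD$ (for the paper, the differential of a translation-modified $\tilde f$ along a section through $E$; for you, the induced section of $\End(\underline V)$ along the zero section), then use that the characteristic polynomial is constant on $\DD^*$ and extend by continuity to $t=0$.

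The one genuine difference is how constancy on $\DD^*$ is obtained. The paper simply invokes the observation recorded just before the proposition: parallel transport along any path in $\DD^*$ carries $\Lambda_{t_0}$ to $\Lambda_{t_1}$ and conjugates $u(f_{t_0})$ to $u(f_{t_1})$, so the characteristic polynomial is automatically constant. Your route through the integrality of $\chi_\RR$ and the finiteness of its monic degree-$g$ factors is valid but more circuitous, and the ``principal subtlety'' you highlight (the rank drop of the lattice at $t=0$) is not actually an obstacle once you have a holomorphic family over all of $\DD$: continuity of the coefficients is all that is needed at $t=0$, and neither argument requires integrality there. In short, your proof works, but the parallel-transport remark gives a one-line replacement for your second paragraph.
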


\begin{proof}
We may suppose $\X=\N$ and write $f= f_\N$. 
Recall that $\N$ is a family of abelian complex algebraic groups. 
Denote by $E_0$ the connected component of $\N_0$ containing $0$, and pick
any point $x\in E$. Then $E$ is isomorphic to $E-x\cong E_0$, hence is a translate of a semi-abelian
variety. By Lemma~\ref{lemma: differential of automorphism of semiabelian variety} any algebraic automorphism $h\colon E\to E$ thus lifts to an affine map on its
universal cover. We shall denote by $u(h)$ its linear part. 

Choose any local holomorphic section $x \colon \DD_r \to \X$
such that $x(0) = x$. Note that a priori $x$ is only defined in a neighborhood of $0\in \DD$ (i.e., $r<1$) in which case
we replace $\X$ by $\pi^{-1}(\DD_r)$. To simplify notation we shall assume $r=1$ in the remaining of this proof. 

We introduce the following automorphism $\tilde{f} \in \Aut(\N)$ by letting $\tilde{f}_t := f_t - f_t(x(t)) + x(t)$
so that $f_t(x(t)) = x(t)$ for all $t$. Since $f$ and $\tilde{f}$ are isotopic 
we have $\tilde{f}(E)=E$, $u(\tilde{f}|E)= u(f|E)$, and $u(\tilde{f}_t)= u(f_t)$ for all $t\neq0$.

We now look at the differential $d\tilde{f}_t(x(t)) \in \End(T_{x(t)}\N_t)$. We may trivialize $T_{x(t)}\N_t$ in a continuous
way and interpret $d\tilde{f}_t(x(t))$ as a continuous family of endomorphisms of the same complex vector space. 
In particular, the characteristic polynomial of $d\tilde{f}_0(x(0))$  is equal to the one of 
 $d\tilde{f}_t(x(t))$ for $t\neq0$ (as the latter is constant). 
 
We conclude by observing that $d\tilde{f}_t(x(t))$ (resp. $d\tilde{f}_0(x(0))$) is conjugated to $u(f_t)$ (resp. to $u(f|E)$)
since $f_t$ (resp. $f|E$) lifts to a constant linear map on its universal cover by Proposition~\ref{lemma: differential of automorphism of semiabelian variety}.
\end{proof}

\begin{proposition}\label{prop: if for thm for families over disc}
Let $\pi\colon \X\to \DD$ be a polarized family of abelian varieties, and let $\phi \colon \X' \dto \X$
be any base change.  Then for any $f\in \Bim(\X)$, there exists $f'\in \Bim(\X')$ such that $\phi \circ f' = f\circ \phi$, and 
$f$ is regularizable if and only if $f'$ is. 
\end{proposition}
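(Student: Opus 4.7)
The plan proceeds in three steps, using the deck-transformation action of $\mu_n$ coming from the base change $s \mapsto s^n$ on $\DD$.

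For the existence of $f'$, present $\X'$ as a bimeromorphic model of the fiber product $\{(x,s) \in \X \times \DD : \pi(x) = s^n\}$, with $\varphi$ the first projection. Since $f$ commutes with $\pi$, the assignment $(x,s) \mapsto (f(x), s)$ is a well-defined bimeromorphism on $\X' \cap \pi'^{-1}(\DD^*)$ and satisfies $\varphi \circ f' = f \circ \varphi$ by construction. Properness of $\X'$ then allows $f'$ to extend bimeromorphically across the central fiber.

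For the direction ``$f$ regularizable $\Rightarrow$ $f'$ regularizable'', start from a proper polarized family $\psi\colon \Y \dto \X$ over $\DD$ on which $f_{\Y}$ is biregular. Perform the same order-$n$ base change on $\Y$: take a functorial resolution $\Y'$ of $\Y \times_{\DD} \DD$ via $t \mapsto t^n$. Because $f_{\Y}$ is biregular and commutes with the projection to $\DD$, the same product formula $(y,s) \mapsto (f_{\Y}(y), s)$ defines a biregular lift $f'_{\Y'}$, and $\Y'$ is bimeromorphically equivalent to $\X'$ over $\DD$ through the base-changed $\psi$.

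For the converse, given a regularization $\psi'\colon \Y' \dto \X'$ of $f'$, exploit that $\mu_n$ acts biregularly on $\X'$ by $(x,s) \mapsto (x, \zeta s)$ and commutes with $f'$. Pulling back via $\psi'$ yields a bimeromorphic $\mu_n$-action on $\Y'$ that commutes with the biregular $f'_{\Y'}$. To regularize $\mu_n$ and $f'_{\Y'}$ simultaneously, close up the $\mu_n$-orbit map in $\prod_{\zeta \in \mu_n} \Y'$ and apply functorial equivariant resolution of singularities; this produces a smooth proper $\Y''$ carrying commuting biregular actions of $\mu_n$ and of $f'_{\Y''}$. The quotient $\Y := \Y'' / \mu_n$, followed by resolution, is a proper polarized family over $\DD$ that is bimeromorphically equivalent to $\X$ (since $\X'/\mu_n$ is identified with $\X$ via $t = s^n$) and carries a biregular automorphism, induced by the $\mu_n$-invariant $f'_{\Y''}$, that is conjugate to $f$.

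The main obstacle is the converse direction, specifically upgrading the a priori merely bimeromorphic $\mu_n$-action on $\Y'$ to a biregular one without destroying the regularity of $f'_{\Y'}$. The graph-closure construction combined with a functorial equivariant resolution handles this in one step by regularizing the whole commuting action of $\mu_n \times \langle f'_{\Y'} \rangle$ at once.
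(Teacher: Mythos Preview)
Your proof is correct and follows essentially the same route as the paper. The paper's argument is terser: for the forward direction it simply notes that if $f$ is already regular on $\X$ then the product formula $(x,t)\mapsto(f(x),t)$ is regular on the fiber product, and for the converse it invokes ``equivariant resolution of singularities'' in one phrase to upgrade the bimeromorphic $\mu_n$-action on the regularizing model to a biregular one commuting with $f_{\Y}$, then quotients. Your graph-closure construction in $\prod_{\zeta\in\mu_n}\Y'$ is precisely the standard way to unpack that one phrase, so the two arguments coincide in substance.
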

\begin{proof}
We may suppose that $\X'$ is the fibered product over $t\mapsto t^n$
 for some $n\in \NN^*$, i.e., 
 $\X' = \{(x,t) \in \X \times \DD, \,   \pi(x) = t^n\}$ (note that $\X'$ may acquire singularities along the central fiber). 
The automorphism $f'(x,t) = (f(x), t)$ satisfies $\pi' \circ f' (x,t)= \pi'(x,t) := \pi(x)$ hence $f'\in \Bim(\X')$. And since 
$\phi(x,t) = x$,  we have  $\phi \circ f' = f\circ \phi$ by construction.

Observe also that if $f$ is regularizable, then we may suppose that $f\colon \X\to \X$ is a regular map, and 
$f'$ is also regular on the fibered product. 

Suppose conversely that $f'$ is regularizable. We may thus find a model $\Y \dto \X'$ such that the induced map $f_{\Y}\colon \Y\to \Y$ 
is regular. Observe that $\phi \colon \X' \to \X$ is a cyclic cover: the 
 $\ZZ_n$-action given by $\zeta \cdot (x,t) = (x, \zeta \cdot t)$ induces a biholomorphism 
 $\X'/\ZZ_n \mathop{\to}\limits^\sim \X$. This action commutes with $f'$, and by equivariant resolution of singularities
 we may suppose that it lifts to a regular action on $\Y$. 
 Let $\tilde{\Y} := \Y/\ZZ_n$. Then $\tilde{\Y}$ is a model of $\X$, and
  $f_\Y$ descends to a regular automorphism $\tilde{\Y}$. 
  This implies $f$ to be regularizable.   
\end{proof}

\begin{proposition}\label{prop:regul-isogeny}
Let $\pi'\colon \X'\to \DD$ and $\pi\colon \X\to \DD$ be two  families of polarized abelian varieties, and let $\phi \colon \X' \dto \X$
be a meromorphic map such that $\pi' = \pi \circ \phi$ and $\phi_t \colon X'_t \to X_t$ is an isogeny for all $t\in\DD^*$. 
Suppose $f\in \Bim(\X)$ and $f'\in \Bim(\X')$ satisfy $\phi \circ f' = f\circ \phi$. 

Then $f$ is regularizable if and only if $f'$ is. 
\end{proposition}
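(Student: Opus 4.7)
The plan is to reduce to the case where the isogeny $\phi$ presents $\X'^*$ as a finite étale Galois cover of $\X^*$, and then transfer regularity between models using normalization in one direction and quotient-by-the-Galois-group in the other.

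\textbf{Step 1 (Reduction to the Galois case).} The kernel $\ker(\phi|_{\X'^*})\to\DD^*$ is a finite étale group scheme. After a finite base change $t\mapsto t^n$ to trivialize its monodromy, Proposition~\ref{prop: if for thm for families over disc} allows us to assume that this kernel is a constant finite abelian group $K$ acting on $\X'^*$, with $\phi|_{\X'^*}\colon \X'^*\to \X^*$ realizing $\X^*$ as the quotient $\X'^*/K$. The intertwining $\phi\circ f'=f\circ\phi$ then forces $f'$ to normalize $K$: since the map $x\mapsto f'(kx)-f'(x)$ takes values in the finite set $K$ and is continuous on the connected fiber $X'_t$, it is constant in $x$, and as a function of $k$ defines a group automorphism $\sigma\in\Aut(K)$ with $f'\circ k=\sigma(k)\circ f'$.

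\textbf{Step 2 (Direction $\Rightarrow$).} Suppose $f$ is regular on a proper model $\varpi_Y\colon Y\to\DD$ of $\X$. Let $Y'$ denote the normalization of $Y$ inside the function field $\mathbb{C}(\X')\supset\mathbb{C}(\X)=\mathbb{C}(Y)$. Then $Y'$ is a normal proper model of $\X'$ and comes equipped with a finite morphism $Y'\to Y$ restricting to $\phi$ over $\DD^*$. The field automorphism $f^*$ of $\mathbb{C}(\X)$ extends to $\mathbb{C}(\X')$ as $(f')^*$ (this is precisely the intertwining condition), and by the universal property of normalization (applied to $f\colon Y\to Y$), this extension is induced by a regular automorphism of $Y'$, which must coincide with $f'$ on the dense open $\X'^*$. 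Hence $f'$ is regularizable on $Y'$.

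\textbf{Step 3 (Direction $\Leftarrow$).} Suppose $f'$ is regular on a proper model $\varpi_{Y'}\colon Y'\to\DD$ of $\X'$. The $K$-action on $\X'^*$ extends uniquely to a bimeromorphic action of $K$ on $Y'$. Apply a functorial equivariant resolution of singularities (Bierstone--Milman, Włodarczyk; cf.~\cite{MR2289519,MR2500573}) to the pair $(Y',K)$ to obtain a proper bimeromorphic modification $\widetilde{Y'}\to Y'$ on which $K$ acts biregularly. Because the construction is canonical, it is equivariant with respect to every automorphism of the pair $(Y',K)$; in particular, the automorphism $f'$ of $Y'$ (which by Step~1 normalizes $K$) lifts to a regular automorphism of $\widetilde{Y'}$. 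Forming the quotient $\widetilde Y:=\widetilde{Y'}/K$, we obtain a proper model of $\X=\X'/K$, and since $f'$ descends modulo the $\sigma$-twisted $K$-action, $f$ descends to a regular automorphism of $\widetilde Y$.

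\textbf{Main obstacle.} The subtle point is Step~3, where we must simultaneously regularize the $K$-action on $Y'$ and preserve the regularity of $f'$. Direct equivariant resolution for the (infinite) group $K\rtimes_\sigma\langle f'\rangle$ is not in the literature in the form we need, but the issue dissolves once one uses a \emph{functorial} equivariant resolution of the pair $(Y',K)$: the canonicity of the construction automatically makes any automorphism normalizing $K$, in particular $f'$, lift regularly. The remaining technical verifications (properness of $\widetilde Y$ as a complex analytic quotient, compatibility of the induced polarization) are standard.
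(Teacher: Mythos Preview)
Your argument is correct. For the forward direction your normalization-in-the-function-field is the same as the paper's Stein-factorization-plus-normalization. For the converse, however, the paper takes a genuinely different route: rather than reducing to a Galois cover and quotienting, it uses duality of abelian varieties. The isogeny $\phi\colon\X'\to\X$ induces a dual isogeny $\phi^\vee\colon\X^\vee\to(\X')^\vee$ going the opposite way; one checks (via the polarization isogeny $\X\to\X^\vee$ and the already-proved forward direction, together with $(\X^\vee)^\vee\cong\X$) that $f$ is regularizable iff $f^\vee$ is. Then $f'$ regularizable $\Rightarrow (f')^\vee$ regularizable $\Rightarrow f^\vee$ regularizable (forward direction applied to $\phi^\vee$) $\Rightarrow f$ regularizable. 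This sidesteps your Step~1 base change and the equivariant-resolution issue of Step~3, at the price of having to construct the dual family over the puncture. Your approach, by contrast, uses nothing specific to abelian varieties beyond the isogeny being finite \'etale, so it would apply verbatim to any finite \'etale quotient of families.

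One small imprecision in your Step~3: the references you cite for ``functorial equivariant resolution of $(Y',K)$'' presuppose a \emph{regular} $K$-action, which you do not yet have on $Y'$. The honest construction is the graph trick (as in Proposition~\ref{prop:iterate-regular}): take the closure $Z\subset(Y')^{|K|}$ of $\{(ky)_{k\in K}\}$, on which $K$ acts regularly by permuting factors and $f'$ acts regularly via the twisted diagonal $(y_k)_k\mapsto(f'(y_{\sigma^{-1}(k)}))_k$; then apply functorial resolution to $Z$. Your ``main obstacle'' paragraph shows you see the issue, and the paper's own proof of Proposition~\ref{prop: if for thm for families over disc} uses the same shorthand, so this is exposition rather than substance.
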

\begin{proof}
By the resolution of singularities, we may suppose that $\phi \colon \X' \to \X$
is regular.  Suppose that $f\colon \X \to \X$ is an automorphism. 
Let $R$ be the ring of germs of holomorphic functions at $0$ in the unit disk, and $K$ be its fraction field.

Consider the Stein factorization $\Y$ of $\X'\to \X$, 
so that $\X' \mathop{\to}\limits^\varphi \Y \mathop{\to}\limits^\nu \X$ with $\nu$ finite, and $\phi = \nu \circ \varphi$. 
Let $\Y'$ be the normalization of $\Y$. 
Denote by $f'_Y \colon \Y' \dto \Y'$ the map induced by $f'$. 
Since $f \circ \nu =  \nu\circ f'_Y$, $f$ is an automorphism, $\nu$ is finite,
it follows that $f'_Y$ cannot have indeterminacy points, proving that $f'$ is regularizable since $\Y'$ is normal.

To prove the converse, recall that
the dual variety $X^\vee_t$ of $X_t$ is the quotient of the space of anti-linear $1$-forms
$\bar{\Omega}$ by the dual lattice $\Lambda_t^\vee :=\{\ell\in \Omega, \Im \ell(\Lambda_t)\subset \ZZ\}$, see~\cite[Chapter 2, \S 4]{Birkenhake-Lange_AV}. 
Since $X_t$ is polarized, there exists a continuous family of positive definite hermitian
forms $H_t \colon V\times V \to \CC$ such that $\Im H_t(\Lambda_t, \Lambda_t) \subset \ZZ$. These polarizations
induces a canonical isogeny map $\phi_t \colon X_t \to X^\vee_t$ sending $z$ to $H_t(z,\cdot)$. Its degree is 
determined by the type of the polarization by~\cite[Chapter~2, Proposition~4.9]{Birkenhake-Lange_AV}. It follows that
$X^\vee_t$ is also polarized, and 
the family of abelian varieties $X^\vee_t$ varies holomorphically\footnote{the polarization is principal iff  $X^\vee_t$
is isomorphic to $X_t$.}.
We briefly sketch how to see that  $\X^\vee$ extends as a family over $\DD$. 

After a base change, there exists 
a holomorphic map $s\mapsto Z(s)$ from the upper half-plane to the Siegel domain ($Z(s)$ is a symmetric $g\times g$ complex matrix such that $\Im(Z(s)>0$), and such that
$Z(s+1) = Z(s) + D B $ for some matrix $B$ with integral coefficients (see \S\ref{sec:naka-vs-mum} below for more details). And $X_t$ is isomomorphic 
to $\CC^g$ quotiented out by the lattice generated by the vectors $d_1 e_1, \cdots, d_g e_g$ and the columns of $Z(e^{2i \pi s})$. 
It follows from~\cite[Chapter~8.1, (1)]{Birkenhake-Lange_AV} that $X^\vee_t$ is then isomorphic to 
$\CC^g$ quotiented out by the lattice generated by the vectors $e_1, \cdots, e_g$ and the columns of $Z'(s):= D^{-1}Z(e^{2i \pi s})$. 
Observe that $Z'(s +1) = Z'(s)+  B$, and  Mumford's construction (see again \S\ref{sec:naka-vs-mum}) based on toroidal geometry
constructs a degeneration of the family $X^\vee_t$ over $\DD$. 
We also have a canonical meromorphic map 
$\phi\colon \X \dto \X^\vee$  (over $\DD$) restricting to $\phi_t$ for all $t\neq0$.   

We now observe that a bimeromorphism $f \colon \X \dto \X$
 canonically induces a bimeromorphism on the dual family $f^\vee\colon \X^\vee \dto \X^\vee$
and that $f^\vee$ is regularizable iff $f$ is. This follows from the previous argument and the fact that 
$(\X^\vee)^\vee$ is canonically isomorphic to $\X$. 

If $f'$ is regularizable, then $(f')^\vee$ is also regularizable, hence $f^\vee$ 
since we have a map $\X^\vee \dto (\X')^\vee$, and we conclude that $f$ is regularizable as required.
\end{proof}  

\subsection{Proof of Theorem \ref{Theorem: criterion for irreducible family of AV}}

Recall the set-up: $\pi\colon \X\to \DD$ is a  proper family of polarized abelian varieties of dimension $g$, and $f\in \Bim(\X)$. 

\medskip

(1) We assume that $\X$ is a non-degenerating family, and want to show that $f$ is regularizable. 
By definition, this means that there exists a base change $\pi'\colon \X' \to \DD$
which defines a smooth family of abelian varieties. The lift of $f$ to $\X'$ is then automatically 
regular (since $\X'_0$ does not contain any rational curve), and we conclude that $f$ is regularizable
by Proposition~\ref{prop: if for thm for families over disc}.

\medskip

(2) 
Suppose that $f$ is regularizable, and no root of unity is an eigenvalue of $u(f_t)$ for some (hence all) $t\neq0$.
Note that this implies $\lambda_1(f_t)>1$.
We may assume that $f\colon \X\to \X$ is a regular map. 
Replacing $\X$ by a suitable base change, we may also suppose that the central fiber of 
$\X$ is reduced and the central fiber of its Néron model $\N$
is an extension of  a finite group  by a semi-abelian variety (see Theorem~\ref{theorem: compactification of Neron model}).

Let $\X^{\sm}$ be the set of points in $\X$ at which $\pi$ is a local submersion, i.e., $\X^{\sm} = \X \setminus \Sing(\X_0) \supset \X^*$. The Néron mapping property implies
the existence of a canonical (holomorphic) map $\psi \colon \X^{\sm} \to \N$  that extends the isomorphism $\X^*\to \N^*$. 

Replacing $f$ by an iterate, we may suppose that 
each irreducible component of $\N_0$ is fixed by $f_\N$. 
Furthermore, each component $E'$ in $\N_0$ is a translate of the component containing the neutral point of $\N_0$
hence is a translate of a semi-abelian variety $G$. The restriction map $f_\N|E'$ 
can be lifted to an affine map on the universal cover of $E'$, the linear part of which we denote by $u(f|E')$. 
Let $r'$ be the dimension of the maximal split subtorus $T\subset G$, so that $A:= G/T$
is an abelian variety of dimension $g' = g-r'$. By Lemma~\ref{lemma: differential of automorphism of semiabelian variety}, $u(f|E')$ splits 
as a sum of two endomorphisms $u(f|E')= u_T \oplus u_A$.

Let $\tau_i$ and $\alpha_j$ be the eigenvalues counted with multiplicity of $u_T$ and $u_A$ respectively so that 
\begin{align*}
 &|\tau_1|\geqslant |\tau_2|\geqslant \dots \geqslant |\tau_{r'}|;\\
 &|\alpha_1|\geqslant |\alpha_2|\geqslant \dots\geqslant |\alpha_{g'}|.
\end{align*}
On the other hand, let $\mu_j$ be the eigenvalues of  $u(f_t)$ ordered as follows
 $|\mu_1|\geqslant \dots \geqslant |\mu_{g}|$.
By assumption, we have $|\mu_1|>1$, and no $\mu_j$ is a root of unity. 
By Proposition~\ref{prop:conjugate}, the collection $\{\mu_j\}$ is the union of $\tau_j$ and $\alpha_k$. 

We claim that if $r'>0$ then $|\tau_1|>1$. Indeed we have $|\tau_1| = \lambda_1(f|T)$, so that by the log-concavity of dynamical degrees
$|\tau_1|=1$ implies $\lambda_j(f|T)=1$ for all $j$, hence
$|\tau_j| =1$ for all $j$. Since $\tau_1$ is the root of the characteristic polynomial of $u_T$, it
is an algebraic integer, and by Kronecker's theorem we conclude that
$\tau_1$ is a root of unity, which yields a contradiction.

Let us proceed now by contradiction and suppose the family is not degenerating. This is equivalent to assume that $r'\ge1$. 
Let $k$ be the least integer so that $|\tau_1|>|\alpha_k|$. If $|\tau_1|\le |\alpha_{g'}|$, then 
we set $k = g'+1$.

As before, let $E$ be an irreducible component of $\X_0$ such that  $\lambda_k(f|E) = \lambda_k(f_t)$. 
By Propositions~\ref{prop: dynamical degrees on the exceptional divisor} and~\ref{prop: dynamical degree on invariant subvariety}
we infer the existence of an irreducible component $E'$ of $\N_0$ such that  $\lambda_k(f_\N|E') = \lambda_k(f_t)$.
But Corollary~\ref{corollary: dynamical degrees on semi-abelian variety} implies
\[
 \lambda_k(f_t) = \prod_{j=1}^k |\mu_j|^2 = \prod_{j=1}^{k-1} |\alpha_j|^2 \times |\tau_1|^2
\]
whereas
\[
 \lambda_k(f_\N|E') = \max\left\{ \prod_{j=1}^{k-1} |\alpha_j|^2 \times |\tau_1| , \prod_{j=1}^{k} |\alpha_j|^2 \right\} 
\]
proving 
$\lambda_k(f_\N|E')< \lambda_k(f_t)$ since $|\tau_1|>1$.

\subsection{Proof of Theorem~\ref{Theorem: decomposition}} 
We suppose that $f$ is an automorphism of $\X$, and $\deg(f_t^n) \asymp
n^{2k}$. Recall that replacing $f$ by an iterate, and after a suitable base change, we may assume the following. 
First $u(f_t)\colon V\to V$ is unipotent. 
Second the central fiber $\X_0$ is reduced. Let $\N$ be the Néron model of $\X$. 
Third, the component $E_0$ of the central fiber of $\N$
containing $0$ is a semi-abelian variety. 

We let $T$ be the maximal multiplicative torus in $E_0$, and denote by 
$r$ its dimension, so that we have an exact sequence $1 \to T \to E_0 \to A \to 1$ 
where $A$ is an abelian variety of dimension $g' := g- r$.

\smallskip

As in the previous section, we set $\X^{\sm} = \X \setminus \Sing(\X)$, and 
denote by $\psi \colon \X^{\sm} \to \N$
the canonical bimeromorphic morphism. 

By Proposition~\ref{prop: component of X0 with maximal growth}, there exists an irreducible component $E$ of $\X_0$ such that 
$\deg(f^n|E) \asymp \deg(f^n_t)$, and $Z:=\psi(E\cap \X^{\sm})$ is an irreducible subvariety of $\N_0$
which is $f_\N$-invariant. Since $\N_0$ is quasi-projective, it admits a projective compactification by Nagata's theorem,
so that Proposition~\ref{prop: dynamical degrees on the exceptional divisor} applies to the morphism $\psi \colon \X^{\sm} \to \N$, and yields:
\[
n^{2k-1}
\lesssim
\deg_1(f_\N^n|Z) \lesssim
n^{2k}~.
\]
We may assume that the component of $\N_0$ containing $Z$ is $E_0$. 
Proposition~\ref{prop: dynamical degree on invariant subvariety} implies
 $n^{2k-1} \lesssim \deg_1(f_\N^n|E_0)$.
By Corollary~\ref{corollary: growth of first degree on semi-abelian variety}, 
we have $\deg_1(f_\N^n|E_0)\asymp \max \{\deg_1(f_\N^n|T), \deg_1(f_\N^n|A)\}$, therefore
\[n^{2k-1} \lesssim  \max \{n^{r-1}, n^{2(g'-1)}\}
~,\]
which implies the required inequality
$2k \le \max \{ r , 2g - 2r -1\}$.


\section{Examples of automorphisms on families of abelian varieties} \label{sec:examples-family}
In this section, we produce examples of automorphisms on families of abelian varieties
for which Theorem~\ref{Theorem: criterion for irreducible family of AV} applies. 
Using the classification of the endomorphism rings of simple abelian varieties, we give a
classification up to isogenies of such families in dimension $\le 5$.

\subsection{Decomposition of families of abelian varieties} 

\begin{proposition}\label{prop:splitting}
Let $\pi\colon \X \to \DD$ be a polarized family of abelian varieties of dimension $g$, 
and $f \colon \X \dto \X$ be a meromorphic map satisfying $\pi \circ f = \pi$, and $f_t$ is an algebraic
group morphism for all $t\in\DD^*$. 

After a suitable finite base change, there exist two families $\pi_j\colon \X_j\to \DD$, $j=0,1$, of polarized abelian varieties, 
bimeromorphic maps $f_j \in \Bim(\X_j)$, 
and a meromorphic map $\psi \colon \X_0\times \X_1 \dto \X$ over $\DD$ such that $\psi (f_0, f_1) = f \circ \psi$ and
the following hold: 
\begin{enumerate}
\item
the induced map $\X_{0,t}\times \X_{1,t} \to \X_t$ is an isogeny for all $t\in \DD^*$;
\item 
$\lambda_1(f_{0,t}) = 1$, and no eigenvalue of $u(f_{1,t})$ is a root of unity. 
\end{enumerate}
\end{proposition}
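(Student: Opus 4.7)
The strategy is to apply Theorem~\ref{thm:splitting} fiberwise to the group morphisms $f_t$, $t\in\DD^*$, and show that the resulting canonical decomposition varies holomorphically in $t$, yielding polarized families $\X_0,\X_1$ over $\DD^*$ that extend to $\DD$. Pick $t_0\in\DD^*$, let $\Lambda$ denote the lattice defining $X_{t_0}=V_{t_0}/\Lambda$, and let $\chi\in\ZZ[T]$ be the characteristic polynomial of $u(f_{t_0})$ acting on $\Lambda$; by integrality and continuity $\chi$ is independent of the choice of $t_0\in\DD^*$. Factor $\chi=PQ$ with $P$ a product of cyclotomic factors and $Q$ having no cyclotomic root. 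Let $W_{j,\QQ}\subset\Lambda_\QQ$ be the kernels of $P(u(f_{t_0}))$ and $Q(u(f_{t_0}))$, and set $\Lambda_j:=\Lambda\cap W_{j,\QQ}$. Since $f$ is globally defined on $\X^*$, the monodromy transformation on $\Lambda$ commutes with $u(f_{t_0})$, hence preserves each $W_{j,\QQ}$ and each $\Lambda_j$.

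For each $t\in\DD^*$, the complex structure $J_t$ on $V_t=\Lambda\otimes\RR$ is $\CC$-linear, hence commutes with $u(f_t)$ and preserves the real subspaces $W_j:=W_{j,\QQ}\otimes\RR$. Thus $V_{j,t}:=(W_j,J_t|_{W_j})$ is a complex subspace of $V_t$, and $X_{j,t}:=V_{j,t}/\Lambda_j$ is a complex torus. The polarizing Hermitian form $H_t$ on $V_t$ restricts to a positive Hermitian form on $V_{j,t}$ whose imaginary part is integer-valued on $\Lambda_j$, so $X_{j,t}$ is polarized. The finite-index inclusion $\Lambda_0\oplus\Lambda_1\hookrightarrow\Lambda$ induces an isogeny $X_{0,t}\times X_{1,t}\to X_t$. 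Since $\Lambda_j$ and $W_j$ are constant while $J_t$ and $H_t$ vary holomorphically, these data assemble into smooth polarized families $\X_j^*\to\DD^*$ and a bimeromorphism $\psi^*\colon\X_0^*\times_{\DD^*}\X_1^*\to\X^*$, under which $f$ is conjugate to the product $(f_0^*,f_1^*)$ induced by $u(f_t)|_{V_{j,t}}$.

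To extend over $0\in\DD$, embed $\X_j^*$ in $\PP^{N_j}\times\DD^*$ via the polarization, take Zariski closure in $\PP^{N_j}\times\DD$, and resolve singularities to obtain smooth polarized families $\pi_j\colon\X_j\to\DD$; the bimeromorphism $\psi^*$ then extends meromorphically to $\psi\colon\X_0\times_\DD\X_1\dto\X$ over $\DD$, and each $f_j^*$ extends to $f_j\in\Bim(\X_j)$. By construction, the eigenvalues of $u(f_{0,t})$ are roots of $P$, hence roots of unity, so $\lambda_1(f_{0,t})=1$ by Corollary~\ref{corollary: growth of first degree on semi-abelian variety}; meanwhile no eigenvalue of $u(f_{1,t})$ is a root of unity. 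The main technical obstacle is controlling the degeneration so that the polarizations extend to relatively ample classes on the central fiber and $\psi$ is cleanly defined at $0$; this is where the finite base change $t\mapsto t^n$ in the statement intervenes, for instance to achieve semi-stable reduction on both split families simultaneously and to trivialize torsion in the relevant period maps.
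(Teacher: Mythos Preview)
Your proposal is correct and follows the same overall strategy as the paper: apply Theorem~\ref{thm:splitting} fiberwise, observe that the splitting $\Lambda_\QQ=\ker P(u)\oplus\ker Q(u)$ is locally constant (hence the subtori vary holomorphically), and then extend the resulting families over $0\in\DD$.

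The only substantive difference is in how the extension over $0$ is carried out. The paper argues as follows: the product family $\X_0^*\times\X_1^*$ is a quotient of $\X^*$ by a finite group of translations, and torsion sections extend meromorphically to $\X$, so the product extends to a family over $\DD$; then, choosing the symplectic basis of $\Lambda$ compatibly with the splitting, the period map $Z'(s)$ to the Siegel domain becomes block-diagonal $Z_0(s)\oplus Z_1(s)$, and each block separately satisfies the monodromy relation $Z_i(s+1)=Z_i(s)+D_iB_i$ (after a base change making the monodromy unipotent), so Mumford's construction yields an extension of each $\X_i$ individually. Your ``embed via the polarization and take Zariski closure in $\PP^{N_j}\times\DD$'' is correct in spirit but imprecise as stated: $\DD$ is not algebraic, so you should either say analytic closure and invoke Bishop's theorem (the fiber degrees are bounded), or pass to the projective $R$-scheme over the ring of germs and take scheme-theoretic closure there. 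Either route works, but the paper's period-matrix argument has the advantage of making transparent exactly where the finite base change enters (to force unipotent monodromy), which your last paragraph only gestures at.
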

In particular, we can apply Theorem~\ref{Theorem: criterion for irreducible family of AV} to the family $f_1\colon \X_1\dto \X_1$. 

\begin{proof}
For each $t\neq0$, we apply Theorem~\ref{thm:splitting}. We get two abelian subvarieties $X_{0,t}$ and $X_{1,t}$
that are both $f_t$-invariant. Denote by $f_i$ the restriction of $f$ to $X_i$, $i=0,1$. 
Then $\lambda_1(f_{0,t})= 1$ and  none of the eigenvalues of $u(f_{1,t})$ is a root of unity. 

Observe that $X_{0,t}$ and $X_{1,t}$ form holomorphic families  of abelian varieties. 
Indeed, we may view $X_t$ as a quotient of a fixed complex vector space $V$ of dimension $g$
by a holomorphically varying cocompact lattice $\Lambda_t$. Also, $f_t$ is induced by a holomorphic
family of endomorphisms $u_t\colon V\to V$ such that $u_t(\Lambda_t) = \Lambda_t$. 
We factor the characteristic polynomial of $u_t$ into a product of two polynomials $P$ and $Q$
where $P^{-1}(0)$ consist of roots of unity, and $Q^{-1}(0)$ contains no root of unity. 
By the proof of Theorem~\ref{thm:splitting}, we have $X_{0,t} = \ker P(u_t) / (\ker P(u_t)  \cap \Lambda_t)$ and
$X_{1,t} = \ker Q(u_t) / (\ker Q(u_t)  \cap \Lambda_t)$ so that both families are holomorphic over $\DD^*$. 

Note that both abelian varieties $X_{0,t}$ and $X_{1,t}$ are endowed with a polarization induced from the one
on $X_t$. We now explain briefly why these families extend over $\DD$. 
Fix any $t_0\neq0$, write $X_{t_0} =V/\Lambda_{t_0}$. The polarization induces a hermitian form $H_{t_0}$ on $V$
such that $\Im(H_{t_0})$ is a symplectic form which takes integral values on $\Lambda_{t_0}$. 
Choose symplectic basis of $P(u_{t_0})\cap \Lambda_{t_0}$ and $Q(u_{t_0})\cap \Lambda_{t_0}$ respectively, and
extend them as a symplectic basis of a finite index subgroup of $\Lambda_{t_0}$. Let $g_0$ (resp. $g_1$) be the rank
of $P(u_{t_0})\cap \Lambda_{t_0}$ (resp. of $Q(u_{t_0})\cap \Lambda_{t_0}$).

The family $X'_t = X_{0,t} \times X_{1,t}$ over $\DD^*$ is a quotient of $X_t$ by a finite group of translations
which acts meromorphically on $\X$ because any torsion section extends meromorphically over $0$ (the group law inducing a meromorphic map $\X \times \X \dto \X$). 
It follows that $\X'$ forms a family of polarized abelian varieties over $\DD$.

As we have a symplectic basis of the lattice defining $X'_t$, 
after a finite base change, 
 there exists a 
holomorphic map $s\mapsto Z'(s)$, from the upper half-plane to the Siegel domain, 
an integral matrix $B'$ and a diagonal integral matrix $D'$ such that: 
$Z'(s+1) = Z'(s) + D' B' $;  and 
$X'_{e^{2i \pi s}}$ is isomomorphic  to $\CC^g$ quotiented out by the lattice  $D \ZZ^{g} + Z'(s) \ZZ^{g}$. 
As our reference symplectic basis was a product of two symplectic basis of $P(u_{t_0})\cap \Lambda_{t_0}$ and $Q(u_{t_0})\cap \Lambda_{t_0}$, it follows that
$Z'$ can be written in block-diagonal form 
$Z'(s) =  Z_0(s) \oplus Z_1(s)$
where  $s\mapsto Z_i(s)$, $i=1,2$ are holomorphic maps from the upper half-plane to the Siegel domains. 
The monodromy information on $Z'$ implies the existence of integral matrices $B_i$ and diagonal integral matrices $D_i$ such that: 
$Z_i(s+1) = Z_i(s) + D_i B $ for each $i$ which implies that both families $X_{i,t}$ extends over $\DD$ (see the proof of Proposition~\ref{prop:regul-isogeny} and \S\ref{sec:naka-vs-mum}).

Let $\psi \colon \X_0^*\times \X_1^* \to \X^*$ be the addition map, and observe that by construction $\psi$ induces an isogeny $\X_{0,t}\times \X_{1,t} \to \X_t$  for all $t\neq0$. 
Since $\X_0$ is a subfamily of $\X$, and $f_0$ is the restriction of $f$ to $\X_0$, 
the map $f_0$ extends meromorphically to $X_0$. The same argument proves $f_1$ is meromorphic. 
\end{proof}

\subsection{Division algebras}
We recall some general facts on division algebras that will be used in the next section. 

 Let $B$ be a division algebra which is finite dimensional over $\QQ$. 
Denote by $K$ its center (this is a number field), and write $e=[K:\QQ]$. Then by Artin-Wedderburn theorem, $B\otimes_{K} K^{\alg}$ is isomorphic to $\Mat_d(K^{\alg})$
hence $\dim_K (B) = d^2$ for some integer $d\ge1$ called the index of $B$. 

Pick any $x\in B$. The sub-algebra $\ZZ[x]$ is finitely generated hence $x$ admits a minimal (possibly reducible) polynomial
$F_x\in \QQ[x]$. The $K$-linear map $b \mapsto bx$ is an endomorphism of $B$ whose characteristic polynomial
is the $d$-th power of a monic polynomial $\Prd_x\in K[T]$ of degree $ed$ called the reduced characteristic polynomial.
If $K'|K$ is a finite extension such that $B\otimes_K K' \simeq \Mat_d(K')$ then we have
$\Prd_x (T)= \det (T\id - \phi(x\otimes1))$.  The reduced trace $\trd_{B/K} \colon B\to K$ is defined as the opposite of the coefficient
of $\Prd_x$ in $T^{ed-1}$, and the reduced norm $\Nrd_{B/K}\colon B\to K$ is $\Nrd_{B/K}(x)= (-1)^{ed}\Prd_x (0)$. 
We define $\trd_{B/\QQ}(x) = \tr_{K/\QQ} (\trd_{B/K}(x)) \in \QQ$, and $\Nrd_{B/\QQ}(x) = \mathrm{Nm}_{K/\QQ} (\Nrd_{B/K}(x)) \in \QQ$
where $\tr$ and $\mathrm{Nm}$ are the standard trace and norm attached to the field extension $K/\QQ$. 
The reduced norm $\Nrd_{B/\QQ} \colon B\to \QQ$ is the unique (up to a scalar factor) 
polynomial function $N\colon B\to \QQ$ of degree $ed$ on $B$ such that $N(ab)=N(a)\cdot N(b)$. 

An order $\cO$ in $B$ is a finitely generated $\ZZ$-submodule which is stable by multiplication (hence is a ring), and such that $\cO\otimes_\ZZ \QQ = B$, see~\cite[Chapter 10]{MR4279905}.  Any element in an order is integral, i.e., is annilihated by a monic polynomial with integral coefficients. Conversely, any integral element is contained in some order
that one can choose to be maximal under inclusion (see~\cite[Exercice~10.5]{MR1972204}).

\subsection{Automorphisms of simple abelian varieties}
We refer to~\cite[IV]{MR2514037}, or~\cite[\S 5.5]{Birkenhake-Lange_AV} for details. 

Let $X=V/\Lambda$ be any abelian variety, $\End(X)$ be the ring (under addition and composition) of algebraic group morphisms of $X$,
and define $\End_\QQ(X):=  \End(X)\otimes_\ZZ \QQ$. Since $\End(X)$ can be identified to the set of complex linear maps 
$u\colon V\to V$ such that $u(\Lambda) \subset \Lambda$, it is a ring (under the addition and composition laws), and $\End_\QQ(X)$ is a finitely dimensional 
$\QQ$-algebra.

Suppose $X$ is simple, i.e., contains no non-trivial abelian subvarieties. Then the kernel of any algebraic group morphism $u\colon X \to X$
is finite, hence $\End_\QQ(X)$ has no divisor of zero and is a division algebra. Since $X$ is polarized, $\End_\QQ(X)$ admits a
involution $f\mapsto f'$ (called the Rosati involution) which is an anti-automorphism such that the quadratic form $q(f) := \tr(f f' \colon H^1(X,\QQ) \to  H^1(X,\QQ) )$ is positive definite. 
It follows that $B:=\End_\QQ(X)$ is a division ring which is finite dimensional over $\QQ$, admits an involutive anti-automorphism $b \mapsto b'$
such that $\trd_{B/\QQ} (b b')>0$.  
The classification of such algebras was obtained by Albert.

Denote by $K$ its center as above, so that $B$ has dimension $d^2$ over $K$. Consider also the field
$K_0\subset K$ of elements fixed by the Rosati involution, and  
write $[K:\QQ]=e$, and  $[K_0:\QQ]=e_0$. 
The strong approximation theorem implies that $K_0$ is a totally real number field, and $K$ 
is a totally imaginary quadratic extension of $K_0$ or $K=K_0$, see~\cite[V.5 (5.2) and (5.4)]{Birkenhake-Lange_AV}. 
In particular, it is a CM field. 

Further informations on the structure of $B$ are obtained by considering its class in the Brauer group of $K$. 
We refer to~\cite[IV. Application I]{MR2514037} for details. 

The following table summarizes the informations we know about $B, K$ and $K_0$. 

\begin{table}[ht]
\caption{Endomorphism algebras of simple abelian varieties}
\centering 
\begin{tabular}{|c|c|clc|c|c|}
  \hline
  Albert type & Center (number field $K$) & $B$& Index $d$
  & 
  $B\otimes_\QQ \RR$ &restriction  \\
  \hline
 I &
totally real 
& $B=K$
  & $1$ & $\RR^e$ & $e|g$
\\
II &
totally real
& $D_{a,b}$
 & $2$  & $\Mat_2(\RR)^e$ & $2e|g$
\\
III & 
totally  real 
& $D_{a,b}$ $a$ and $b\ll 0$ 
  & $2$  & $\Hm^e$ & $2e|g$
\\
IV & 
CM  &
second kind 
& $d\ge 1$  & $\Mat_d(\CC)^{e/2}$ & $\frac12 e d^2|g$
 \\
   \hline
 \end{tabular}
 \label{table1}
\end{table}
To avoid confusion with the upper half plane $\HH =\{z\in \CC, \Im(z)>0\}$, we let $\Hm =\{\alpha + \beta i + \gamma j +\delta (ij) , \alpha, \beta, \gamma, \delta \in \RR\}$ be the Hamilton division algebra over $\RR$
with $i^2 = j^2 = -1$ and $ij =-ji$. More generally over a field $K$, we let 
$D= D_{a,b}:=\{\alpha + \beta i + \gamma j +\delta (ij) , \alpha, \beta, \gamma, \delta \in K\}$
where $a,b$ are given elements in $K$ and $i^2 = a$, $j^2 =b$ and $ij =-ji$.
Note that $\trd_{B/K}(\mu) = 2\alpha \in \cO_K$, and $\Nrd_{B/K}(\mu) = \alpha^2 - a \beta^2 - b \gamma^2  + (ab)\delta^2 $, 
and $D_{-1,-1} = \Hm$.

Types I, II and III are referred to as being of the first kind:  the restriction of the involution is then trivial on $K$, i.e., $K=K_0$. 
Division algebras arising in Type IV are called of the second kind. 

\smallskip

In Type I, $B=K$ is any totally real field.

\smallskip

In Type II, $B$ is any quaternion algebra over a totally real field $K$ such that for each embedding $\sigma_i \colon K \to \RR$, we have
$B\otimes_{\sigma_i(K)} \RR \simeq \Mat_2(\RR)$, so that 
$B\otimes_\QQ \RR \simeq   \Mat_2(\RR)^e$, and the reduced trace is given by $\sum_{i=1}^e \tr(M_i)$. 
One says that $B$ is totally indefinite. 
One can show that there exists an isomorphism $B\otimes_\QQ \RR \simeq   \Mat_2(\RR)^e$ such that 
the anti-involution on $B$ corresponds to  $(M_1, \cdots, M_e) \to ({}^t\!M_1,\cdots, {}^t\!M_e)$.

\smallskip

In Type III, $B$ is any quaternion algebra over a totally real field $K$ such that for each embedding $\sigma_i \colon K \to \RR$, we have
$B\otimes_{\sigma_i(K)} \RR \simeq \Hm$, so that 
$B\otimes_\QQ \RR \simeq   \Hm^e$. 
One says that $B$ is totally definite. 
Through this isomorphism, the anti-involution on $B$ corresponds to 
$(b_1, \cdots, b_e) \to (\sigma_\Hm(b_1), \cdots ,\sigma_\Hm(b_e))$,  and the reduced trace is given by $\sum_{i=1}^e \trd_{\Hm}(b_i)$.

\smallskip

Type IV is harder to describe. Then $B$ is any division algebra over a CM field $K$, such that the restriction of the involution to $K$
is the conjugation $\sigma \colon K \to K$ whose fixed point set is $K_0$, and satisfying the following conditions. 
For any finite place $v$ fixed by $\sigma$, then $[B]_v =0$ in the Brauer group of $K_v$; and 
for any finite place $v$ fixed by $\sigma$, then $[B]_v + [B]_{\sigma(v)}=0$.
In that case there exists an isomorphism $B\otimes_\QQ \RR \simeq   \Mat_d(\CC)^e$ such that 
the involution on $B$ corresponds to  $(M_1, \cdots, M_e) \to ({}^t\!\bar{M}_1,\cdots, {}^t\!\bar{M}_e)$. 

\medskip

Suppose now that we are given $f \in \Aut_\bullet(X)$, which we identify with the morphism
$f^* \colon H^1(X,\ZZ) \to  H^1(X,\ZZ)$. 
Then there exists a polynomial with integer coefficients annilihating $f$
and $\det(f) =1$. In other words, $f$ is integral viewed as an element in $B$ and satisfies $\Nrd_{B/\QQ}(f)=\pm 1$. 
In fact, by the uniqueness of the existence of norms on division algebra, we have
\[
\chi_{f} (n) 
= 
\Nrd_{B/\QQ} (n - f)^{\frac{2g}{de}}
\]
for all $n\in\NN$, where $\chi_{f} (T) = \det (T \id - f)$ is the characteristic polynomial of 
$f^* \colon H^1(X,\QQ) \to  H^1(X,\QQ)$,
see~\cite[\S 13.1]{Birkenhake-Lange_AV}. 
In particular, $\lambda_1(f)$ is a  
root of the reduced characteristic polynomial of $f$ in $B$. 
This relation is exploited in~\cite{MR4363581,sugimoto} to construct many examples of automorphisms a simple abelian varieties
whose first dynamical degrees are Salem numbers. 

The condition appearing in Theorem~\ref{Theorem: criterion for irreducible family of AV} that $f^* \colon H^1(X,\QQ) \to  H^1(X,\QQ)$
has no eigenvalue equal to a root of unity is thus equivalent to impose that 
the polynomial $\Nrd_{B/\QQ} (T - f)$ does not vanish at any root of unity.

\subsection{Automorphisms of families of abelian varieties having no cyclotomic factors} \label{sec:aut-no-cyclo}
Let us explain how to construct non-isotrivial families of polarized abelian varieties $\X$, 
and families of algebraic  group isomorphisms  $f_t\in \Aut_\bullet(X_t)$
such that the characteristic polynomial of $u(f_t)$ has no cyclotomic factor, and $X_t$ is simple
for some (hence for a generic) $t\in \DD^*$. 

We consider the moduli spaces of abelian varieties whose ring of endomorphisms contain a given division algebra.
The principle is to start with a division algebra $B$ which is finite dimensional over $\QQ$, together with a positive involutive anti-automorphism, and
a representation $\rho\colon B\to \Mat_g(\CC)$ 
such that there exists $\theta\colon B\to \Mat_{2g}(\QQ)$ 
for which $\theta\otimes 1_\CC = \rho \oplus \bar{\rho}$
(see~\cite[Lemma~9.1.1]{Birkenhake-Lange_AV} for an intrinsic characterization of such representations). 
Note that all examples are described by Albert as above. Then 
we choose an order $\cO \subset B$ and an element $f\in \cO$ such that $\Nrd_{B/\QQ}(f) = \pm 1$.

As explained in~\cite[Chapter 9]{Birkenhake-Lange_AV}, each data $(B,\cO,\rho)$ gives rise to a (several for type IV algebras)
complex variety $\HH(B,\cO,\rho)$ parameterizing triples $(X,H,\imath)$ where $X =\CC^g/\Lambda$ is an abelian variety, $H$ is a positive definite hermitian form on $\CC^g$ corresponding to a polarization of $X$, 
and $\imath \colon \cO \to \End(X) \subset \Mat_g(\CC)$ is an embedding sending the Rosati involution to the involution in $B$ such that
$\imath$ and $\rho$ are equivalent representations. It turns out that  $\HH(B,\cO,\rho)$ is always a hermitian symmetric domain, see Table~\ref{table2} below. 
The moduli space $\M(B,\cO,\rho)$ of such triples up to natural isomorphism is a quotient of $\HH(B,\cO,\rho)$ by a discrete (arithmetic) subgroup, which
admits a canonical structure of quasi-projective variety. Its Satake-Baily-Borel compactification is a projective variety $\overline{\M(B,\cO,\rho)}$.

Any holomorphic map $\DD \to \overline{\M(B,\cO,\rho)}$ and any choice of $\mu\in \cO$ with $\Nrd_{B/\QQ}(\mu) = \pm 1$ gives rise to a family of polarized abelian varieties $X_t$ with automorphisms
$f_t\colon X_t\to X_t$ over $\DD$. 

In the next table, we set 
\begin{align*}
\Siegel_g
&:=\{Z \in \Mat_{g\times g}(\CC)|\, {}^t\! Z =  Z \text { and } \Im (Z) >0\}
\\
\mathcal{H}_g
&:=\{Z \in \Mat_{g\times g}(\CC)|\, {}^t\! Z = - Z \text { and } \id - {}^t\! \bar{Z}Z >0\}
\\
\mathcal{H}_{r,s}
&:=\{Z \in \Mat_{r\times s}(\CC)|\,  \id_s - {}^t\! \bar{Z}Z >0\}
\end{align*}
\begin{table}[ht]
\caption{Moduli space of abelian varieties with endomorphism structure}
\centering 
\begin{tabular}{|c|c|c|c|}
\hline
  Albert type 
  & 
  dimension of the abelian variety
  & 
 symmetric domain 
 &
 dimension of the moduli space  
 \\
  \hline
 I 
&
$l  e$  
& 
$(\Siegel_l)^e$ 
& 
$\frac{e}2 l(l+1)$
 \\
 II
&
$2l  e$  
& 
$(\Siegel_l)^e$ 
& 
$\frac{e}2 l(l+1)$
\\
III
&
$2l  e$  
& 
$(\mathcal{H}_l)^e$ 
& 
$\frac{e}2 l(l-1)$
\\
IV
&
$d^2 e l = 2 d^2 e_0 l$
&
$\prod \mathcal{H}_{r_j,s_j}$
&
$\max_{r_j + s_j = dl} \sum_{j=1}^{e_0} r_j s_j$
\\
   \hline
 \end{tabular}
\label{table2}
\end{table}

Explaining in details all possible outcomes for all types of endomorphism algebras of simple abelian varieties would be too lengthy. 
We discuss only those examples that arise in dimension $\le 5$ (up to isogenies). 

\smallskip

\subsubsection*{Type I}~\cite[\S 9.2]{Birkenhake-Lange_AV}.
Let $K$ be any totally real field of degree $e\ge 2$ over $\QQ$ with ring of integers $\cO_K$. 
Write $g = l \times e$ for some $l\in \NN^*$. 
Fix non-equivalent embeddings $\{\imath_j \colon K \to \RR\}_{1\le j \le e}$. 
Pick $\mu\in \cO_K^*$ any unit which is not a root of unity (it always exists by Dirichlet's unit theorem if $e\ge 2$).  

For any $Z\in (\Siegel_l)^e$, introduce the lattice $\Lambda_z$
given as the image of 
\[\lambda_z\colon \cO_K^l\oplus \cO_K^l \to \CC^g, 
\lambda_z(\alpha, \beta)= (\alpha_1 Z_1 +\beta_1, \cdots,  \alpha_e Z_e +\beta_e)~.\]
Here we have $\alpha = (\alpha^1 , \cdots, \alpha^l) \in \cO_K^l$, $\beta = (\beta^1 , \cdots, \beta^l) \in \cO_K^l$, and we write
$\alpha_j = (\imath_j(\alpha^1) , \cdots, \imath_j(\alpha^l))$, and $\beta_j =(\imath_j(\beta^1) , \cdots, \imath_j(\beta^l))$. 
Then $X_z := \CC^g /\Lambda_z$ is an abelian variety with polarization 
\[
E(v,w)=
\sum_{j=1}^e \Im(v_j  (\Im Z_j)^{-1} \bar{w}_j)~, 
\]
which is integral on $\Lambda_z$. The diagonal map $(\mu_1 \id_l, \cdots, \mu_e \id_l)$ on $\CC^g$ is an automorphism of $\Lambda_z$
hence we get an automorphism $f_z\colon X_z\to X_z$ whose characteristic polynomial is irreducible and has no
cyclotomic factor. The first dynamical degree of $f_z$ equals $\max\{|\mu_j|^2\}$ by Corollary~\ref{corollary: growth of first degree on semi-abelian variety}.

\begin{remark}
Observe that the subspace $\RR^g\subset \CC^g$ intersects $\Lambda_z$ in a lattice, and 
is invariant under the diagonal map. It follows that $f_z$ admits an invariant totally real
 compact torus of dimension $g$. 
\end{remark}

The manifold $\HH(K,\cO_K,\rho)$ is isomorphic to $(\Siegel_l)^e$, has dimension $\frac{e}2 l(l+1)$, and 
 $\M(K,\cO_K,\rho)$ is obtained by quotienting out of  by an arithmetic subgroup of $e$ copies of the symplectic group in dimension $2l$. 
 When $l=1$,  we have $e=g$ and $\M(K,\cO_K,\rho)= \HH^g/\PSL(2,\cO_K)$
 is a Hilbert-Blumenthal modular variety. We refer to~\cite[Chapter 1]{MR930101} for the proof that it is a quasi-projective variety of dimension $g$ 
 that can be projectively compactified by adding finitely many points. 

\begin{remark}
There exist smooth families of polarized abelian varieties defined over a \emph{compact} Riemann surface $X\to B$
that admit a family of automorphisms with $\lambda_1 >1$. 
Indeed since the Hilbert-Blumenthal variety is the complement of a finite set in a projective varieties, it contains
many compact curves (of high genus, see~\cite{MR3862056}).
\end{remark}

\subsubsection*{Type II  \textup{\&} III}
The precise construction is given in \cite[\S 9.3--5]{Birkenhake-Lange_AV} or~\cite{MR156001}. 

\smallskip

\subsubsection*{Type IV}
We refer to~\cite[\S 9.6]{Birkenhake-Lange_AV}
for details of the construction in this case. Observe that there is no non-trivial family of such type in dimension $\le 5$.

\subsection{Classification in low dimension}
We list all up to isogeny by Proposition~\ref{prop:splitting} positive dimensional families of polarized abelian varieties which admit a family
of automorphisms $f_t$ such that the characteristic polynomial of $f_t^* \colon H^1(X_t, \QQ) \to H^1(X_t, \QQ) $
has no cyclotomic factors (in particular $\lambda_1(f_t)>1$). 

Any abelian variety $X$ is isogenous to a direct product  $X_1^{k_1} \times \cdots \times X_m^{k_m}$
where $X_1, \cdots, X_m$ are  simple non isogenous abelian varieties. 
The ring $\End_\QQ (X_1\times \cdots \times X_m)$ is then isomorphic to $\Mat_\QQ(k_1, \End(X_1))\times \cdots \times \Mat_\QQ(k_m, \End(X_m))$ where $\End_\QQ(X_i)$ 
is a division algebra of type I, II, III or IV. Thus, the classification of families of automorphisms as in Theorem~\ref{Theorem: criterion for irreducible family of AV}  is reduced to a combinatorial question.

To exclude certain families, we observe that an abelian variety 
$X$ with $\End_\QQ(X) = \QQ$ does not carry any automorphism with  $\lambda_1>1$. 
In particular, in the type I case, we can always suppose $e>1$.

Also we shall use the work of Shimura, namely~\cite[Theorem~5]{MR156001}, to rule
out some cases: 
\begin{itemize}
\item
Type III and $g/2e=1$ then $X$ is not simple;
\item
Type IV and $\sum r_\nu s_\nu = 0$ then $X$ is not simple;
\item
Type IV $l=2$, $d=1$, $r_\nu=s_\nu=1$ for all $\nu$ then the endomorphism algebra is included in 
a type III example;
\item
Type IV and $l=1$, $d=2$, $r_\nu=s_\nu=1$ for all $\nu$, then $X$ is not simple.
\end{itemize}

In each case, we denote by $m$ the maximal dimension of families of abelian varieties
that one can obtain. In the table below, by convention  by a unit we mean a unit
not lying in the group of roots of unity.

\bigskip

\noindent $\boxed{g=2}$

\begin{itemize}
\item[2.1]
generically non-simple: $X_t= E_t^2$, $\dim(E_t)=1$;  $f$ is determined 
by a unit in a totally real quadratic field  (i.e., a matrix in $\SL(2,\ZZ)$), $m=1$;
\item[2.2]
generically simple with endomorphism algebra of type I, $l=1$ , $e=2$; $f$ is determined 
by a unit in a totally real quadratic field  not in $\mathbb{U}_\infty$, $m=2$.
\end{itemize}

\begin{remark}
Abelian surfaces whose endomorphism algebra are of type II, III or IV
are rigid (see Table~\ref{table2}). A direct proof of these facts is given in~\cite{MR3619745}.
\end{remark}

\noindent $\boxed{g=3}$

\begin{itemize}
\item[3.1]
generically non-simple:  $X_t=E_t^3$, $\dim(E_t)=1$; $f$ is determined 
by a unit in a cubic field (i.e., a matrix in $\SL(3,\ZZ)$, $m=1$;
\item[3.2]
generically simple with endomorphism algebra of type I, $l=1$ , $e=3$; $f$ is determined 
by a unit in a totally real cubic field, $m=3$.
\end{itemize}

\begin{remark}
Simple abelian $3$-folds whose endomorphism algebra is of type I 
and has invariants $l=3$ and $e=1$ do not carry automorphisms with $\lambda_1>1$. 
Types II, III, IV are not possible since $g$ is odd. Also any automorphism 
of the family $X_t = E_t \times Y_t$ where $Y_t$ is as in 2.2 preserves $E_t$, 
thus, there is an eigenvalue of $f_*$ which is a root of unity for any $f \in\End(X_t)$.
\end{remark}

\noindent $\boxed{g=4}$

\begin{itemize}
\item[4.1]
generically non-simple: $X_t=E_t^4$, $\dim(E_t)=1$;  $f$ is determined 
by a unit in a quartic field  (i.e., a matrix in $\SL(4,\ZZ)$), $m=1$; 
\item[4.2]
generically non-simple:  $X_t=E_t^2\times A_t$,  $\dim(E_t)=1$, $\dim(A_t)=2$ as in 2.2; $f$ is determined 
by two units in two real quadratic fields, $m=2$;
\item[4.3]
generically non-simple:  $X_t=A^1_t\times A^2_t$,  $\dim(A^i_t)=2$ as in 2.2; $f$ is determined 
by two units in two real quadratic fields, $m=4$;
\item[4.4]
generically non-simple:  $X_t=(A_t)^2$,  $\dim(A_t)=2$ as in 2.2; $f$ is determined 
by a matrix in $\GL(2,\cO_K)$ for some real quadratic field $K$, $m=2$;
\item[4.5]
generically simple with endomorphism algebra of 
type I, $l=1$, $e=4$;
$f$ is determined by a unit in a totally real quartic field, $m=4$;
\item[4.6]
generically simple with endomorphism algebra of 
type I, $l=2$, $e=2$, $f$ is determined by a unit in a real quadratic field, $m=6$;
\item[4.7]
generically simple with endomorphism algebra of 
type II, $e=2$, $l=1$, $f$ is determined by $\mu\in B$ a totally indefinite quaternion algebra
over a real quadratic field with $\Nrd_{B/\QQ}(\mu) = \pm1$, $m=2$;
\item[4.8]
generically simple with endomorphism algebra of 
type II, $e=1$, $l=2$, $f$ is determined by $\mu\in B$ a totally indefinite quaternion algebra
over $\QQ$ with $\Nrd_{B/\QQ}(\mu) = \pm1$, $m=3$;
\end{itemize}
\begin{remark}
Observe that in the last two cases 4.7 and 4.8,  the existence of $\mu$ with reduced characteristic polynomial 
having no cyclotomic factor puts some restriction on the algebra. In case 4.6, if $B$ is given by 
$i^2 = a$ and $j^2 = b$, we need the existence of a solution of the quadratic equation
$x^2 - a y^2 - b z^2 + (ab) t^2 = \epsilon$, with $(x,y,z,t) \in \QQ$, $\epsilon = \pm1$,  and 
$x \notin\{0, \pm 1\}$ if $\epsilon =1$ and $x\neq0 $ if $\epsilon = -1$.
\end{remark}

\begin{remark}
Simple abelian $4$-folds whose endomorphism algebra is of type I (resp. type IV)
and has invariants $l=4$ and $e=1$ (resp. $d=1$, $(e_0,l)=(1,2)$) do not carry automorphisms with $\lambda_1>1$. 
Simple abelian $4$-folds whose endomorphism algebra is of
type III with $e=2$, $l=1$ (resp. type IV with $d=1$, $l=1$) are rigid. 
Finally for any totally definite quaternion algebra of type III with $e=1$ defined over $\QQ$ 
the elements with $\Nrd_{B/\QQ}(\mu) = \pm1$ form a finite group.  
\end{remark}

\noindent $\boxed{g=5}$
\begin{itemize}
\item[5.1]
generically non-simple: $X_t=E_t^5$, $\dim(E_t)=1$; $f$ is determined 
by a unit in a quintic field (i.e., a matrix in $\SL(5,\ZZ)$), $m=1$; 

 \item[5.2]
generically non-simple:  $X_t=E_t^2 \times Y_t$, $\dim(E_t)=1$ as in 2.1, $\dim(Y_t)=3$ as in 3.2;
$f$ is determined  by two units in the ring of integers of a totally real quadratic and cubic fields respectively, $m=4$;

 \item[5.3]
generically non-simple:  $X_t=A_t \times E_t^3$, $\dim(A_t)=2$ as in 2.2, $\dim(E_t)=1$ as in 3.1;
$f$ is determined  by two units in the ring of integers of  a real quadratic and of a cubic field, $m=3$;

 \item[5.4]
generically non-simple:  $X_t=A_t \times Y_t$, $\dim(A_t)=2$ as in 2.2, $\dim(Y_t)=3$ as in 3.2;
$f$ is determined 
by two units in the ring of integers of  a totally real quadratic and cubic fields respectively, $m=5$;
\item[5.5]
generically simple with endomorphism algebra of 
type I, $l=1$, $e=5$, $f$ is determined by a unit in a totally real quartic field, $m=5$.
\end{itemize}

\begin{remark}
Simple abelian $5$-folds whose endomorphism algebra is of type I 
and has invariants $l=5$ and $e=1$ do not carry automorphisms with $\lambda_1>1$. 
\end{remark}


\section{Translations on families of abelian varieties}\label{sec:translations}
In this section, we focus our attention to families of translations, and prove Theorems~\ref{Theorem: translations are regularizable} and~\ref{thm:orbit-closure} from the introduction.

\subsection{Relative toric manifolds over $\DD$}\label{sec:relative}
Following Mumford, we build a suitable compactification of denegeration of abelian varieties, 
by using special families of toric varieties. 

Let  $R$ be the  discrete valued ring of germs of holomorphic functions on $\DD$ at $0$, and let $K$ be the fraction field of $R$.
We shall consider toric $R$-schemes (as defined in~\cite[IV.3]{KKMS} or~\cite[Chapter 3.5]{MR3222615})
which are smooth over $R$ and whose generic fiber
is equal to $\GGm^g$ over $K$. Any such scheme gives rise to a 
smooth complex manifold $X$ containing $\DD^* \times \GGm^g$ as an open dense subset such that:
\begin{itemize}
\item
the projection map onto the first factor extends to a surjective holomorphic map $\pi \colon X \to \DD$ and $\pi^{-1}(\DD^*) = \DD^* \times \GGm^g$ ;
\item
the action of $\GGm^g$ by multiplication on the second factor of $\DD^* \times \GGm^g$ extends holomorphically  to 
$X$.
\end{itemize}
For convenience, we shall refer any such manifold to a relative toric manifold of dimension $g$ over $\DD$. 
Let us now describe the combinatorial data that encodes toric $R$-schemes satisfying the conditions above, 
and explain how to build the relative toric manifold from these data.

\smallskip

Let $M$  be the lattice of characters
of $\GGm^g$, so that an element $m\in M$ is a group morphism $\me(m)\colon \GGm^g \to \GGm^1$. Define 
$N= \Hom ( M, \ZZ)$ its lattice of co-characters (a point $n\in N$ corresponds to a one-parameter subgroup of $\GGm^g$). 
Write  $\tilde{M}_\RR:= M_\RR \times \RR_+$ and  $\tilde{N}_\RR:= N_\RR \times \RR_+$. 

An admissible\footnote{this is a purely local terminology} fan $\Delta$ is a (possibly infinite) collection of rational polyhedral cones
in $\tilde{N}_\RR$ that is closed under taking intersections and faces. 
We impose moreover the following conditions:
\begin{itemize}
\item
no cone of $\Delta$ contain a linear subspace of $N_\RR$ (i.e., $\sigma$ is strongly convex);
\item
each cone $\sigma\in \Delta$ is simplicial (but not necessarily regular\footnote{
 a simplicial cone $\sigma$ of dimension $k$ is regular if
the semi-group $\sigma\cap \tilde{N}$ is generated by $k$ elements});
\item
no cone in contained in $N_\RR\times \{0\}$.
\end{itemize} 
The last two conditions respectively correspond to the fact that we want the resulting toric variety $\X$ to be smooth, and the generic fiber over $\DD$ to be the split torus. 

A ray of $\Delta$ is a one-dimensional cone.

\smallskip

For each cone $\sigma$, we define the semi-group $S_\sigma= \{\tilde{m }\in M\times \ZZ, \, \langle \tilde{m}, \tilde{n} \rangle \ge 0 \text{ for all } \tilde{n} \in \sigma\}$.
Following, e.g.,~\cite[\S 1.2]{MR922894}, we associate to each cone $\sigma$  the relative (affine) toric manifold of dimension $g$ over $\DD$
by setting 
\[
X_\sigma = 
\{ x \colon S_\sigma \to \CC, \, x(\tilde{m}_1+\tilde{m}_2)= x(\tilde{m}_1) 
\cdot x(\tilde{m}_2), \, x(0)=1\ \text{ and } |x(0,1)|< 1\} 
\]
which we endowed with the topology of the pointwise convergence, and the unique structure of complex manifold for which 
any $\tilde{m} \in S_\sigma$ induces a holomorphic function. 

The manifold $X_\sigma$ is equipped with a canonical holomorphic map $\pi_\sigma(x):= x(0,1)\in\DD$, and 
$\CC[\tilde{M}]$ forms a subalgebra of the field of meromorphic functions on $X_\sigma$ that are regular on $X_\sigma^*=\pi_\sigma^{-1}(\DD^*)$
(the point $(0,1)$ corresponds to $\pi_\Delta$). 

Note that by construction an element in $\CC[\tilde{M}]$ is holomorphic in $X_\sigma$ if and only if
it belongs to $\CC[S_\sigma]$. And this algebra generates the structure sheaf $\cO_{X_\sigma}$ as a $\cO_{X_\sigma}$-module.
It is customary to write $\be(\tilde{m})$ for the holomorphic function induced by $\tilde{m} \in S_\sigma$ on $X_\sigma$.

The torus $\GGm^g$ acts on itself by multiplication, hence on the $\CC$-algebra $\CC[M]$ and the trivial extension
 to $\CC[\tilde{M}]$ (leaving the vector $(0,1)$ fixed) turns $X_\sigma$ into a relative toric manifold over $\DD$.
\begin{remark}
 As $\sigma$ is regular, we may suppose that $\tilde{e}_0 =(e_0,1), \cdots , \tilde{e}_k =(e_k,1) \in N\times \NN$ such that $ \sigma \cap \tilde{N}$ is generated by $(\tilde{e}_0, \cdots, \tilde{e}_k)$. We may suppose $e_0=0$ and complete the family of vectors $e_i$ such that $e_1, \cdots, e_g$ form
 a basis of $N$. In the canonical basis $(0,1), (e_i^*,0)$ of $M\times \ZZ$, 
$S_\sigma$ is equal to  $\{( a_1, \cdots, a_g,b), \in \ZZ^{g+1}, b\ge 0, b+a_j \ge 0 \text{ for all  } j=1,\cdots, k\}$
which is generated by $ (e_i^*,0),  (-e_i^*,1)$ for $j=1,\cdots, k$, and $\pm(e_i^*,0)$ for $j= k+1, \cdots , g$.
Hence $X_\sigma$ is biholomorphic to 
\[\{ (z_1, w_1, \cdots, z_k, w_{k}, y_{k+1}, \cdots, y_g, t ) \in  (\CC^*)^{2k}\times \CC^{g-k}  \times \DD,\, 
z_1w_1 = t, \cdots, z_kw_k = t
\}~.\]
 \end{remark}

Any inclusion of cones $\sigma \subset \sigma'$ yields an inclusion $S_\sigma \supset S_{\sigma'}$ hence induces by restriction an embedding $X_{\sigma} \subset X_{\sigma'} $
preserving the action by $\GGm^g$ and the projection to $\DD$. 
We define $X(\Delta)$ as the disjoint union of all $X_\sigma$ where $\sigma$ ranges over all cones of the fan $\Delta$, 
and $X_\sigma$ and $X_{\sigma'}$ are patched along $X_{\sigma\cap \sigma'}$ (interpreted as an open subset of both).

Any relative toric manifold over $\DD$ is isomorphic to $X(\Delta)$ for some fan satisfying the conditions above. 
We shall denote by $\pi_\Delta \colon X(\Delta) \to \DD$ the canonical projection map. As before,  $\CC[\tilde{M}]$
forms a subalgebra of the field of meromorphic functions on $X(\Delta)$ that are regular on $X(\Delta)^*$, and the action of 
$\GGm^g$ on $X(\Delta)$ is the dual to its natural action on $\CC[\tilde{M}]$.
\begin{lemma}\label{lem:one-parameter}
Let $\phi\colon \DD^*\to \GGm^g$ be any holomorphic map that is meromorphic at $0$, and let $n_\phi \in N$
be the unique co-character satisfying
\[
\langle n_\phi,m \rangle:= \ord_t \me(m)(\phi(t)) \text{ for all } m\in M
~.\]
Then for any fan $\Delta$,  the map $t \mapsto (\phi(t),t)$ extends as a holomorphic function $\DD\to X(\Delta)$ if and only if $\RR_+\cdot (n_\phi,1)$
belongs to a cone in $\Delta$.
\end{lemma}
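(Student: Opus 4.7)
The plan is to work chart by chart: the map $t\mapsto(\phi(t),t)$ extends through $0$ in $X(\Delta)$ if and only if it extends holomorphically into one of the affine charts $X_\sigma$ for some $\sigma\in \Delta$, so the whole question reduces to understanding when $(\phi,\mathrm{id})$ extends to a single $X_\sigma$.

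First I would recall that $X_\sigma = \{x\colon S_\sigma\to\CC\ \text{semigroup hom.},\ x(0)=1,\ |x(0,1)|<1\}$ and that its structure sheaf is generated by the holomorphic functions $\be(\tilde m)$, $\tilde m\in S_\sigma$. A map $\bar\phi\colon \DD\to X_\sigma$ is therefore holomorphic and extends $(\phi(t),t)$ from $\DD^*$ exactly when, for every $\tilde m=(m,a)\in S_\sigma$, the pullback
\[
\be(\tilde m)\circ(\phi(t),t) \;=\; \me(m)(\phi(t))\cdot t^a
\]
is holomorphic at $t=0$. By the very definition of $n_\phi$, the order of vanishing of this function is $\langle n_\phi,m\rangle+a = \langle (n_\phi,1),\,\tilde m\rangle$, so the condition is that $(n_\phi,1)$ pair non-negatively with every element of $S_\sigma$, i.e.\ $(n_\phi,1)\in (S_\sigma)^\vee$. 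Since $S_\sigma=\sigma^\vee\cap\tilde M$ and $\sigma$ is a strongly convex rational polyhedral cone, biduality gives $(S_\sigma)^\vee=\sigma$, so the condition is exactly $(n_\phi,1)\in\sigma$. This proves the reverse implication: if $\RR_+\cdot(n_\phi,1)\subset\sigma$ for some $\sigma\in\Delta$, the semigroup homomorphism $\tilde m\mapsto \me(m)(\phi(\cdot))\,t^a$ into $\cO_{\DD,0}$ defines the desired extension to $X_\sigma\subset X(\Delta)$, and the condition $x(0,1)=t$ with $|t|<1$ is automatic.

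For the forward direction, suppose $t\mapsto(\phi(t),t)$ extends to a holomorphic map $\bar\phi\colon \DD\to X(\Delta)$. Then $\bar\phi(0)$ lies in some affine chart $X_\sigma$, and by continuity $\bar\phi(\DD')\subset X_\sigma$ for a small disk $\DD'\ni 0$. Applying the calculation above on $\DD'$, each $\be(\tilde m)\circ\bar\phi$ is holomorphic at $0$, which forces $\langle(n_\phi,1),\tilde m\rangle\geq 0$ for all $\tilde m\in S_\sigma$, hence $(n_\phi,1)\in\sigma$ as required.

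I expect no serious obstacle here: the only thing to be careful about is the biduality step $(S_\sigma)^\vee=\sigma$ (which relies on $\sigma$ being strongly convex rational polyhedral, a hypothesis built into the admissible fans) and the verification that the semigroup homomorphism $\tilde m\mapsto \me(m)(\phi(t))\,t^a$ genuinely defines a holomorphic map into $X_\sigma$ rather than only a formal one—this is immediate from the fact that $S_\sigma$ is finitely generated, so only finitely many holomorphic functions need to be checked.
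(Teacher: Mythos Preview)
Your proposal is correct and follows essentially the same approach as the paper: reduce to a single affine chart $X_\sigma$, observe that extension there is equivalent to $\ord_t\bigl(\me(\tilde m)(\phi(t),t)\bigr)=\langle (n_\phi,1),\tilde m\rangle\ge 0$ for all $\tilde m\in S_\sigma$, and conclude via duality that this means $(n_\phi,1)\in\sigma$. You are in fact slightly more explicit than the paper about invoking biduality $(S_\sigma)^\vee=\sigma$ and about the continuity argument for landing in a single chart, but the substance is identical.
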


\begin{proof}
Let $\Delta$ be any admissible fan. Then $\tilde{\phi}(t):=  (\phi(t),t)$ extends as a holomorphic function $\DD\to X(\Delta)$
if and only if there exists a cone $\sigma\in \Delta$ such that $\tilde{\phi} \colon \DD\to X_\sigma$ is holomorphic.

Since $\CC[S_\sigma]$ generates the structure sheaf $\cO_{X_\sigma}$ as a $\cO_{X_\sigma}$-module, 
$\tilde{\phi} \colon \DD\to X_\sigma$ is holomorphic if and only if $\me(\tilde{m})(\tilde{\phi})$ is holomorphic at $0$
for any $\tilde{m}\in S_\sigma$. The latter is equivalent to the condition $\ord_t(\me(\tilde{m})(\tilde{\phi}(t)))\ge0$ for all $\tilde{m}\in S_\sigma$.

If $(n_\phi,1)$ does not belong to $\sigma$, then one can find $\tilde{m}=(m,k) \in S_\sigma\subset M\times \ZZ$ such that
$\langle (n_\phi,1), \tilde{m}\rangle <0$, and 
\begin{align*}
\ord_t(\me(\tilde{m})(\tilde{\phi}(t)))
&=
\ord_t(\me(m,k)(\tilde{\phi}(t)))
\\
&= \ord_t(\me(m)(\phi(t))) +k 
= \langle (n_\phi,1), \tilde{m}\rangle <0
\end{align*}
so that $\tilde{\phi}$ does not extend holomorphically. 

If $(n_\phi,1)$ belongs to $\sigma$, then the same computation shows that $\tilde{\phi}$ extends holomorphically. 
\end{proof}

\subsection{Analytic Mumford's construction following Nakamura} \label{sec:naka-vs-mum}
We describe in this section the construction of a proper model of a family of polarized abelian varieties over an open disc as done in~\cite{Nakamura_Neron_models}.
Recall our notation for the upper half-plane $\HH = \{ l\in\CC|\ \Im(l) > 0\}$ and for the Siegel domain
$\Siegel_g = \{M \in \Mat_{g\times g}(\CC)|\ {}^t\! M= M \text { and } \Im(M)>0\}$ for any $g\ge1$.

Let $\pi\colon \X\to \DD$, $\L\to \X$ be any  family of polarized abelian varieties. 
Fix any $t_\star \in \DD^*$, write $X_\star=X_{t_\star}$
as the quotient of some complex vector space $V$ of dimension $g$  by a co-compact lattice $\Lambda_\star$. 
The polarization induces a positive-definite hermitian form $H$ on $V$ whose imaginary part $E= \Im H$ takes integral values on $\Lambda_\star$ 
(note that $H (v,w) = E(iv,w)+ i E (v,w)$). 
Since $E$ is a skew-symmetric form on $V$, we may
choose a symplectic basis $f_1, \cdots , f_g, e_1, \cdots , e_g$ of the lattice $\Lambda_\star$ so that 
\[
E= 
 \begin{pmatrix}
  0& D \\
  -D & 0 \end{pmatrix}
\]
for some diagonal integral matrix $ D= \diag (d_1, \cdots, d_g)$ with $d_1|\cdots |d_g$.
In the basis $\tilde{e}_i = \frac1{d_i}e_i$ of $V$, the lattice is generated by 
the vectors given by the columns of the following period matrix
\[ \Pi_\star = \begin{pmatrix}
 a_{11} & \dots & a_{1g}& d_1& 0 & \dots & 0 &\\
a_{21} & \dots & a_{2g}& 0 & d_2& \dots & 0 &\\
  &\ddots&  &&&\ddots&\\
 a_{g1} & \dots & a_{gg}& 0& 0 & \dots &  d_g
 \end{pmatrix}
 = (Z_\star~D)
\]
where $Z_\star$ belongs to $\Siegel_g$. 
Now consider the universal covering map $\be\colon \HH\ \to \DD^*$ with $\be(s) = \exp(2\pi is)$. 
Identifying the universal covers of $X_t$ to $V$ so that the polarization $\L_t\to X_t$
defines the same hermitian form $H$ for all $t$, we may locally follow holomorphically 
the lattices $\Lambda_t = \ker (V \to X_t)$ such that $e_1, \cdots, e_g$ belongs to $\Lambda_t$, and 
since $\HH$ is simply connected, we get a holomorphic function $s\mapsto Z(s) = [a_{ij}(s)] \in \Siegel_g$ 
so that $\Lambda_{\be(s)}$ is generated by the column of $\Pi(s)= (Z(s)~ D)$. 
Here is a diagram summarizing the construction to follow:\[
\begin{tikzcd}[ampersand replacement=\&]
	\& {\mathbb{G}_m^g\times \mathbb{D}\circlearrowleft \Gamma} \\
	{Z_{\star}\in\mathfrak{H}_g} \& {\mathcal{X}\supset X_{\star}} \\
	{s\in\mathfrak{H}} \& {\mathbb{D} \ni t}
	\arrow["\pi", shift right=3, from=2-2, to=3-2]
	\arrow["\exp"', from=3-1, to=3-2]
	\arrow[shift left=4, from=2-1, to=3-1]
	\arrow["Z", shift right=5, curve={height=-12pt}, from=3-1, to=2-1]
	\arrow[from=2-1, to=2-2]
	\arrow["{Z_0}"', from=3-2, to=2-1]
	\arrow[shift right=3, from=1-2, to=2-2]
\end{tikzcd}
\]
In the rest of the discussion, we shall replace our original family by the 
family $X'_ t:= V/\Lambda'_t$ where $\Lambda'_t := \Lambda_t + \ZZ \tilde{e}_1 + \cdots +\ZZ \tilde{e}_g$
is generated by the columns of  $\Pi'(s)= (Z(s)  \id_g)$.
Then $\X'$ forms a family of abelian varieties which is principally polarized (i.e., the canonical map $X'_t \to (X')^\vee_t$ is an isomophism), 
and we thus have a family of isogenies $\phi_t\colon X_t \to X'_t$. 

\medskip

 Since $(Z(s+1) ~ \id_g)$ and $(Z(s)~\id_g)$ define the same polarized abelian variety, there exists an element
$M\in \mathrm{Sp}(2g,\ZZ)$ such that $M\cdot Z(s+1) = Z(s)$ for all $s$, see~\cite[Proposition~8.1.3]{Birkenhake-Lange_AV} (the action is by generalized Möbius transformations). 
The matrix $M$ encodes the monodromy action of $\pi_1(\DD^*)$ on $H^1(X_\star, \ZZ)$ which fixes $\tilde{e}_1, \cdots, \tilde{e}_g$, hence is upper block-triangular. 
By the Griffiths-Landman-Grothendieck's monodromy theorem (see, e.g., \cite[Theorem 13.7.3]{MR3727160})  $M$ is moreover quasi-unipotent. 
Up to base change we may (and shall) suppose that $M$ is unipotent. 
This means that we can find an integral-valued matrix $B\in M(\ZZ,g)$ such that
$Z(s+1)= Z(s) +B$.
\begin{lemma}\cite[Lemma 2.3]{Nakamura_Neron_models}\label{lem:6.3}
The matrix $B$ is  symmetric positive semi-definite, and there exists a holomorphic map 
$Z_0\colon \DD \to\Siegel_g$  such that 
$Z(s)= Z_0(\be(s)) +Bs$. 
\end{lemma}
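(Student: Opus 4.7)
The plan is to reduce the matrix-valued statement to a scalar statement about holomorphic self-maps of the upper half plane, then bootstrap entry-by-entry. Symmetry of $B$ is immediate: each $Z(s)$ is symmetric, hence so is $B = Z(s+1) - Z(s)$.

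For the remaining assertions I would establish the following scalar lemma: \emph{if $f\colon \HH \to \HH$ is holomorphic and $f(s+1)=f(s)+c$ for some $c\in\RR$, then $c\ge 0$, and the $1$-periodic function $g(s):= f(s)-cs$ descends to a holomorphic $g_0\colon \DD^* \to \overline{\HH}$ that extends holomorphically across $0$.} The key tool is the Schwarz-Pick inequality applied to the pair $(s,s+1)$, which using the explicit formula for the hyperbolic distance yields
\[
\mathrm{arccosh}\!\left(1+\tfrac{c^2}{2\Im f(s)^2}\right)\;\le\;\mathrm{arccosh}\!\left(1+\tfrac{1}{2y^2}\right),
\qquad \text{i.e.,}\qquad \Im f(s) \ge |c|\,y.
\]
If $c<0$, then $\Im g_0(\tau) = \Im f(s) - cy \ge 2|c|y \to +\infty$ as $\tau\to 0$. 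Then at the isolated singularity $0$ of $g_0$: Picard's great theorem rules out an essential singularity (the image omits the open half-plane $\{\Im z < N\}$ for every $N$), while a pole of order $n\ge 1$ would force $\arg g_0$ to sweep out $[0,2\pi)$ on small circles and thus produce points with $\Im g_0<0$, contradicting the lower bound; hence $c<0$ is impossible. For $c\ge 0$, Schwarz-Pick gives $\Im g_0\ge 0$, and the same Picard/pole dichotomy (no essential singularity since the lower half-plane is omitted, no pole since then $\Im g_0$ would take negative values) shows the singularity is removable.

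Next I would apply this lemma to $f_v(s):= v^T Z(s)\,v$ for each nonzero $v\in \RR^g$: positive definiteness of $\Im Z(s)$ makes $f_v$ map into $\HH$, and $f_v(s+1)=f_v(s)+v^T B v$. The lemma then yields $v^T B v\ge 0$, so $B$ is symmetric positive semi-definite, and each $f_v(s)-(v^T B v)s$ descends to a holomorphic function on $\DD^*$ extending holomorphically to $\DD$. Taking $v=e_i$ and $v=e_i+e_j$ and invoking the polarization identity
\[
2\,e_i^T M e_j = (e_i+e_j)^T M (e_i+e_j) - e_i^T M e_i - e_j^T M e_j,
\]
every entry of $Z(s)-Bs$ descends and extends holomorphically to $\DD$. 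This produces the holomorphic $Z_0\colon \DD \to \Mat_{g\times g}(\CC)$ with $Z(s)=Z_0(\be(s))+Bs$; symmetry of $Z_0$ follows from that of $Z$ and $B$, and the Schwarz-Pick estimate $v^T \Im Z(s)v \ge y\, v^T B v$ applied in every direction $v$ gives $\Im Z_0(\tau)\ge 0$ on $\DD^*$, whence on $\DD$ by continuity.

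The main obstacle is the scalar lemma: combining the Schwarz-Pick bound with the Picard/pole analysis to simultaneously force $c\ge 0$ and produce the holomorphic extension. Once that is in hand, everything else is a routine reduction via diagonal and polarized quadratic forms.
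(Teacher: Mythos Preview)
The paper does not prove this lemma itself; it is quoted from Nakamura without argument, so there is no in-paper proof to compare against. Your proof is correct and self-contained: the Schwarz--Pick bound $\Im f(s)\ge |c|\,y$ is the right engine, and the Picard/pole dichotomy cleanly handles both the sign of $c$ and the removability of the singularity at $0$. The polarization step recovering the individual entries from the quadratic forms $v^T Z(s)v$ is routine.

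One remark on the codomain: your argument yields only $\Im Z_0\ge 0$ (positive semi-definite), and this is in fact sharp---for $Z(s)=s\cdot\id_g$ one gets $B=\id_g$ and $Z_0\equiv 0\notin\Siegel_g$. So the target $\Siegel_g$ in the stated lemma is a slight overstatement; the correct target is the closure $\overline{\Siegel_g}$. This does not affect how the lemma is used afterwards: what matters is the holomorphic extension of $Z_0$ to $\DD$, the positive semi-definiteness of $B$, and the fact that the block $Z_1$ (in the directions where $B$ vanishes) genuinely lands in $\Siegel_{g'}$. That last point also falls out of your argument, since for real $v$ with $v^T B v=0$ one has $g_v=f_v\colon\HH\to\HH$, and the open mapping theorem then forces the extended value at $0$ to lie in $\HH$ rather than on its boundary.
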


Denote by $r'$ the rank of the matrix $B$. After changing the basis $e_i$, we may suppose that 
\begin{align}\label{eq: Nakamura's data of the family}
 B = \begin{pmatrix}
      0 & 0 \\ 0 & B'
     \end{pmatrix}; && 
     Z_0(t) = \begin{pmatrix}
   Z_1(t) & Z_2(t)\\ {}^{t}\!Z_2(t) & Z_3(t)
     \end{pmatrix},
\end{align}
where $B'\in \Mat(r',\ZZ)$ is a positive definite integral matrix of size $r'$, $g' := g-r'$, and $Z_1\colon \DD \to \Sym(g',\CC)$,  $Z_2\colon \DD \to \Mat(g'\times r',\CC)$,
 $Z_3\colon \DD \to \Sym(r',\CC)$,   are holomorphic. Here $ \Sym(h,\CC)$ denotes the space of complex symmetric square matrices of size $h$.
\begin{remark}
In order to be consistent with the notation of the rest of the paper, we insist on writing $r'$ for the rank of $B$. 
This is in conflict with the notation used by Nakamura who used $g'$ for this rank. 
\end{remark}
\begin{remark}
The number $r'$ equals the dimension of maximal torus in a semi-abelian component of the Néron model of $\X$ in the case when $\X$ has semi-abelian reduction. In particular,
$r'=0$ iff $X_t$ is not degenerating.
\end{remark}
Let us explain now how to recover the family $\X^*$ from~\eqref{eq: Nakamura's data of the family}. 
Observe that for a fixed $t\in \DD^*$ with $t=\be(s)$, then $X_t$ is the quotient of $V\simeq \CC^g$ by the 
lattice generated by the column of $\Pi(s)$. Taking the quotient by $\ZZ e_1\oplus \cdots \oplus \ZZ e_g$
first, we get $X_t = \GGm^g/\Gamma$ where $\Gamma = \ZZ^g$ acts on the split torus
by multiplication 
\begin{equation}\label{eq:action}
\gamma \cdot (z'_1, \cdots , z'_g) = \left(
z'_1\, \prod_{j=1}^g \be(a_{1j}(s)\gamma_j), \cdots, 
z'_g\, \prod_{j=1}^g\be(a_{gj}(s)\gamma_j)
\right). 
\end{equation}

The family $\X^*$ can be thus obtained as the quotient of $\GGm^g\times\DD^*$ by an action of $\Gamma$
which we now describe in dual terms. Recall from the previous section that $M$ is the set of characters of $\GGm^g$. 
The universal covering map $V\to \GGm^g$ is given in the coordinates $e_i$ by the map 
$v =\sum v_i {e}_i \mapsto  (\be(v_1), \cdots, \be(v_g))$. 
To exploit the block-diagonal form of $B$ as in~\eqref{eq: Nakamura's data of the family}, we write the coordinates on $\GGm^g$ as
$(z_1, \cdots, z_{g'}, w_1, \cdots , w_{r'}) = (\be(v_1), \cdots, \be(v_g))$. 
A character $m\in M$ can thus be written as $m = z^p w^q$ with $p\in\ZZ^{g'}$ (resp. $q\in\ZZ^{r'}$) a character of $\GGm^{g'}$ (resp. of $\GGm^{r'}$). 

The action of $\Gamma$ can then be described on $M\times \NN$ as follows.
For any  $\gamma= (\alpha, \beta)\in \Gamma$, where $\alpha\in \ZZ^{g'}$ and $\beta \in\ZZ^{r'}$
we have (with $t=\be(s)$):
\begin{align*}
{\gamma}^* (z^p) &= \be(\alpha \cdot Z_1(t)\cdot {}^t\!p + \beta \cdot  {}^t\!Z_2(t)\cdot  {}^t\!p) \, z^p;\\
{\gamma}^* (w^q t^l) &= \be( \alpha \cdot Z_2(t)\cdot {}^t\!q + \beta \cdot Z_3(t)\cdot {}^t\!q) \, t^{l + \beta\cdot B \cdot {}^{t}\!q} w^q;\\
\end{align*}
Note that this action induces a dual action of $\Gamma$ on the lattice $N\times \NN$
so that $(\gamma \cdot \tilde{n}) (\tilde{m}) = n(\gamma^*\tilde{m})$ for all $\tilde{n} \in N\times \NN$ and $\tilde{m} \in M\times \NN$.
In the dual basis $N= \{ (a,b) \in \ZZ^{g'}\times \ZZ^{r'}\}$ so that $\langle (a,b), (p,q)\rangle = (a\cdot {}^{t}\!p) + (b\cdot {}^{t}\!q)$, we have
$ (\alpha, \beta) \cdot (a,b,k) (p,q,l) =  (a,b,k) (p, q, l+ \beta\cdot B' \cdot {}^{t}\!q)$, 
hence
\begin{equation}\label{eq:actionN}
(\alpha, \beta) \cdot (a,b,k) =  (a, b + k\, \beta\cdot B',k)~.
\end{equation}
It follows that $\Gamma$ preserves the affine hyperplanes $N\times\{k\}$ for all $k$, 
acts trivially on  $N\times\{0\}$, 
and the action of the subgroup $\{0\} \times \ZZ^{g'}$ is free on  $N\times\{k\}$ for all $k\in \NN^*$. 

\smallskip

Since $\Gamma$ acts by multiplication on each element of $M\times \NN$, for any
$\gamma\in \Gamma$, the map $(z',t) \mapsto \gamma\cdot (z',t)$ on  $\GGm^g \times \DD^*$
extends as a holomorphic map from $X_\sigma$ to $X_{\gamma\cdot \sigma}$
for any (rational polyhedral strongly convex) cone $\sigma$ (in $\tilde{N}_\RR$). 

It follows that the action of $\Gamma$
extends  to the relative toric variety
$X(\Delta)$ over $\DD$ if and only if the fan $\Delta$ is $\Gamma$-invariant.

\begin{theorem}\label{theorem: Nakamura's main theorem}
There exists a $\Gamma$-invariant fan $\Delta$ such that 
the following holds: 
\begin{enumerate}
\item 
the set of rays of $\Delta$ is exactly given by $\RR_+\cdot (n,1)$
for all $n \in N$ such that $(n,1)$ belongs to the span of the 
$\Gamma$-orbit of $(0,1)$;
\item 
the action of $\Gamma$ on $X(\Delta)$ is free and $\pi_\Delta$-equivariant;
\item 
the restriction of the action of $\Gamma$ to any fiber of $\pi_\Delta$ is co-compact. 
\end{enumerate}
\end{theorem}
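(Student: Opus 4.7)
The plan is to construct $\Delta$ explicitly by a Mumford-type toroidal method. By~\eqref{eq:actionN}, the $\Gamma$-action on $\tilde N=N\times\NN$ factors through the projection $(\alpha,\beta)\mapsto\beta$: the element $\beta\in\ZZ^{r'}$ acts by $(a,b,k)\mapsto(a,b+k\beta B',k)$, while $\alpha\in\ZZ^{g'}$ acts trivially. In particular, this action preserves the sublattice $N':=\{0\}\times\ZZ^{r'}\times\ZZ$ of $N\times\ZZ$ and restricts, at height $k=1$, to the translation by $\beta B'$ on the slice $\{0\}\times\RR^{r'}\times\{1\}$.

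The first step would be to use the positive-definite integral form $B'$ to produce a $B'\ZZ^{r'}$-invariant Delaunay decomposition of $\RR^{r'}$ with vertex set $\ZZ^{r'}$, and then to refine it into a $B'\ZZ^{r'}$-invariant regular simplicial subdivision with the same vertex set. Transferring this subdivision to the affine slice $\{0\}\times\RR^{r'}\times\{1\}\subset\tilde N_\RR$ and coning every cell from the origin of $\tilde N_\RR$ yields a rational polyhedral fan $\Delta\subset N'_\RR$ whose cones are all strongly convex, simplicial, regular, and disjoint from $N_\RR\times\{0\}$ apart from the origin, so that $\Delta$ is admissible in the sense of \S\ref{sec:relative}. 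By construction the rays of $\Delta$ are exactly $\RR_+\cdot(0,b,1)$ for $b\in\ZZ^{r'}$, which are precisely the integral points in the affine span of the $\Gamma$-orbit $\{(0,\beta B',1):\beta\in\ZZ^{r'}\}$ of $(0,1)$ at height~$1$; this gives~(1). Moreover, the $\beta$-action permutes the cells of the subdivision by translation while the $\alpha$-part acts trivially on $\tilde N$, so $\Delta$ is $\Gamma$-invariant.

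Since $\Delta$ is contained in the proper sublattice $N'$, the relative toric manifold splits as $X(\Delta)=\GGm^{g'}\times X(\Delta')$, where $X(\Delta')\to\DD$ is built from $\Delta$ viewed inside $\RR^{r'}\times\RR_+$. The $\Gamma$-action on $X(\Delta)$ is $\pi_\Delta$-equivariant by construction. For freeness, any $\beta\neq 0$ translates every cone of $\Delta'$ at height~$\ge 1$ by a nonzero vector, hence permutes the irreducible strata of $X(\Delta')$ without fixed stratum, and on each generic fiber multiplies $w^q$ by $\be(\beta\cdot Z_3(t)\cdot{}^t\!q)$ whose imaginary part is nonzero since $\Im Z_3(t)$ is positive definite; and any $\alpha\neq 0$ acts on the $\GGm^{g'}$-factor by multiplicative translation along the character $p\mapsto\be(\alpha\cdot Z_1(t)\cdot{}^t\!p)$, which is nontrivial because $\Im Z_1(t)$ is positive definite, hence free on $\GGm^{g'}$ for every $t\in\DD$. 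This gives~(2). Finally, the Delaunay subdivision has only finitely many $B'\ZZ^{r'}$-orbits of cells, so $\Delta/\Gamma$ is finite; combined with the fact that $\GGm^{g'}/\ZZ^{g'}$ (under the $\alpha$-action) is compact in each fiber, this gives~(3).

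The principal technical difficulty is the combinatorial step of producing a $B'\ZZ^{r'}$-invariant refinement of the Delaunay decomposition that is simultaneously simplicial and \emph{regular}, so that $X(\Delta)$ is smooth rather than merely $\QQ$-factorial. This refinement is carried out in detail in~\cite{Nakamura_Neron_models}; once it is granted, the rest of the verification proceeds as above.
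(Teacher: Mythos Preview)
Your approach is essentially the same as the paper's: take a $\Gamma$-invariant Delaunay decomposition of the affine slice at height~$1$ (using the positive-definite form $B'$, or equivalently a $\Gamma$-invariant metric, on $\RR^{r'}$), and cone over it from the origin of $\tilde N_\RR$. The paper's own argument is likewise only a sketch that defers to~\cite{Nakamura_Neron_models} for the co-compactness statement, and your extra detail on freeness (splitting into the $\alpha$- and $\beta$-parts and using positivity of $\Im Z_1$, $\Im Z_3$) is a welcome addition.

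One point needs correction. You identify the ``principal technical difficulty'' as refining the Delaunay cells into a simplicial subdivision that is moreover \emph{regular} while keeping the same vertex set $\ZZ^{r'}$, and you assert that this is carried out in~\cite{Nakamura_Neron_models}. This is not accurate, and in general it is impossible: unimodular refinements typically require inserting new rays, which would destroy condition~(1). The paper is explicit about this: its definition of admissible fan requires the cones only to be simplicial, not regular, and the proof sketch notes (citing~\cite[\S 1.14]{Alexeev_Nakamura}) that the Delaunay fan need not be regular once $g\ge 5$. Thus $X(\Delta)$ may have finite-quotient singularities; this causes no trouble downstream, since the application to Theorem~\ref{Theorem: translations are regularizable} finishes by invoking Proposition~\ref{prop:smooth-regular} (functorial resolution). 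Simply drop the regularity requirement and your sketch goes through.
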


\begin{remark}
In order for the $\Gamma$-action to be co-compact it is necessary for the support of $\Delta$ to 
intersect $N\times\{1\}$ along an affine space. One can show that the Néron model is obtained
by taking the subfan of $\Delta$ having only one dimensional faces
generated by the $\Gamma$-orbit of $(0,1)$. 
In general $\Delta$ contains
more rays. This happens exactly when the central fiber of the Néron model is reducible. 
\end{remark}

\begin{proof}[Sketch of proof]
The proof is given in~\cite[Theorem 2.6]{Nakamura_Neron_models}. 
Let $\Pi$ be the real affine subspace generated by the $\Gamma$-orbit of $(0,1)$. 
Observe that $N_\Pi:= N\times\{1\}\cap \Pi$ is a co-compact lattice in $\Pi$. 
Let $\Delta_0$ be the fan whose cones are $(0)$ and the $1$-dimensional cones $\RR_+ (n,1)$ with $n \in \Pi\cap N\times\{1\}$.

Since $\Gamma$ acts by translation on $\tilde{N}_\RR$ preserving $N\times \NN$, we may find a scalar product (hence a metric $d$ on $\tilde{N}_\RR$)
which is $\Gamma$-invariant. 
Following~\cite[\S 1]{Alexeev_Nakamura}, we define a Delaunay cell $\sigma$ to be the closed convex hull of all elements $\tilde{n}\in N_\Pi$
for which there exists $\alpha \in \Pi$ satisfying $\inf_{N_\Pi} d(\alpha,\cdot) = d(\alpha,\tilde{n})$. If the metric is sufficiently general, then 
we obtain a $\Gamma$-invariant cell-decomposition of $\Pi$ by simplices whose
vertices are all contained in $N_\Pi$. 
 Observe that this fan is not necessarily regular if $g\ge 5$ (see~\cite[\S 1.14]{Alexeev_Nakamura}).

We define $\Delta$ to be the fan whose cones are generated by cells of this triangulation. The fact that $\Gamma$ acts co-compactly on $X(\Delta)$ is proved in~\cite{Nakamura_Neron_models}
for any dimension $g$. 
\end{proof}

A crucial observation is the following. 
\begin{proposition}\label{prop:holo-sec}
Let $\pi\colon\X\to\DD$ be any principally polarized family of abelian varieties. 
Then any holomorphic section over $\DD^*$ which is meromorphic at $0$
 extends holomorphically to $\DD$.
 
 If moreover $\X = X(\Delta)/\Gamma$ for some discrete group $\Gamma\simeq \ZZ^{g}$
 acting on $\GGm^g\times\DD$ as in ~\eqref{eq:action}, then 
 any meromorphic section $\psi\colon \DD^*\to \X$ lifts to a holomorphic
 section $\tilde\psi\colon \DD\to X(\Delta)$ having the following property.
 
 Since $X(\Delta) \supset \GGm^g \times \DD^*$,  post-composing $\tilde\psi|_{\DD^*}$ with the first projection yields a 
 meromorphic map $\phi\colon \DD^* \to \GGm^g$. 
Then $(n_\phi,1)$ as defined in Lemma~\ref{lem:one-parameter} belongs to the linear span of $\Gamma\cdot (0,1) \subset \tilde{N}_\RR$.
 \end{proposition}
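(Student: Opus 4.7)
The plan combines Nakamura's explicit toroidal compactification (Theorem~\ref{theorem: Nakamura's main theorem}) with the elementary fact that coverings of $\DD$ are trivial.

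For the first assertion, the plan is to reduce to the Mumford situation of~\S\ref{sec:naka-vs-mum}. By Proposition~\ref{prop: if for thm for families over disc}, extending sections is unaffected by a finite base change, so I may assume that $\X$ has unipotent monodromy (Lemma~\ref{lem:6.3}). Applying Theorem~\ref{theorem: Nakamura's main theorem}, I obtain an admissible fan $\Delta$ such that $X(\Delta)/\Gamma$ is proper over $\DD$ and identified with $\X$ over $\DD^*$; since $\X$ is principally polarized, this Mumford model can be taken as a proper bimeromorphic model of $\X$. A meromorphic map from the smooth curve $\DD$ to a proper complex variety is holomorphic (its indeterminacy locus is of codimension at least two, hence empty on a curve), so $\sigma$ extends as a holomorphic section of the proper model, and the given meromorphic extension of $\sigma$ to $\X$ forces the value at $0$ to lie in $\X$ itself. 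This proves~(1).

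For the second assertion I turn to the lifting. By Theorem~\ref{theorem: Nakamura's main theorem}(2), the action of $\Gamma$ on $X(\Delta)$ is free, and by~(3) its restriction to each fiber of $\pi_\Delta$ is co-compact, which ensures proper discontinuity; hence the quotient map $q\colon X(\Delta)\to \X$ is a holomorphic covering. Given a meromorphic section $\psi$, part~(1) yields a holomorphic extension $\psi\colon\DD\to\X$. Since $\DD$ is contractible, the covering lift theorem produces a unique (once a value at $0$ is chosen) holomorphic lift $\tilde\psi\colon\DD\to X(\Delta)$. Writing $\tilde\psi(t)=(\phi(t),t)$ on $\DD^*$, the first projection $\phi\colon\DD^*\to\GGm^g$ is meromorphic at $0$. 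Lemma~\ref{lem:one-parameter} then asserts that $\RR_+\cdot(n_\phi,1)$ lies in some cone of $\Delta$. By Theorem~\ref{theorem: Nakamura's main theorem}(1), every ray of $\Delta$ is generated by a vector $(n,1)$ lying in the linear span $\Pi$ of $\Gamma\cdot(0,1)$, so every cone of $\Delta$ lies in $\Pi$; hence $(n_\phi,1)\in\Pi$, as required.

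The main technical point to settle carefully is, in part~(1), the identification of a general principally polarized family with its Mumford compactification: after base change the two models are canonically bimeromorphic over $\DD$ and equal over $\DD^*$, and one must verify that the holomorphic value obtained on the proper model coincides (via the given meromorphic extension) with a point of $\X$ itself. For families already of the form $\X=X(\Delta)/\Gamma$ as in part~(2), this is automatic since $\X$ is then already proper over $\DD$ by Theorem~\ref{theorem: Nakamura's main theorem}(3), so the covering-lift argument proceeds directly.
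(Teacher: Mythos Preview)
Your argument for part~(2) is correct and essentially identical to the paper's: lift through the free $\Gamma$-quotient (which is a covering since $\Gamma$ acts freely and fiberwise co-compactly), then apply Lemma~\ref{lem:one-parameter} and the description of the rays of $\Delta$ in Theorem~\ref{theorem: Nakamura's main theorem}(1).

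For part~(1), however, your route is both roundabout and incomplete. First, Proposition~\ref{prop: if for thm for families over disc} concerns regularizability of bimeromorphic self-maps, not the behaviour of sections under base change, so that citation does not justify the reduction you want. Second, the detour through the Mumford model buys you nothing: even after you extend $\sigma$ holomorphically on the proper model $X(\Delta)/\Gamma$, pulling the value back to $\X$ requires that $\sigma(0)$ avoid the indeterminacy locus of the bimeromorphic map $X(\Delta)/\Gamma \dashrightarrow \X$, and you give no reason for this. You flag this yourself but do not resolve it.

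The paper's argument for~(1) is more elementary and avoids all of this. By definition, a meromorphic map has a graph closure $C \subset \DD \times \X$ whose projection to $\DD$ is \emph{proper}. Since $C$ is one-dimensional and irreducible (being the closure of the graph over $\DD^*$), the fibre $C \cap X_0$ is a single point $p$, so $\psi$ extends continuously and hence holomorphically. This works whether or not $\X$ is proper, requires no base change, and does not invoke Mumford's construction at all. The paper then observes (from $\pi \circ \psi = \id_\DD$) that $d\pi_p$ is surjective, so $p$ lies in the smooth locus of $X_0$; this is what guarantees the local lifting needed for part~(2), though your global covering argument sidesteps the need for this remark.
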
 
Note that $\Gamma\cdot (0,1)$ is the translate  by $(0,1)$ of a discrete group (i.e., $\{(0,\beta\cdot B'), \beta \in \ZZ¨{r'}\}$) whose rank equals the one of $B$ (i.e., $r'$), see~\eqref{eq:actionN}.
\begin{proof}
We first claim that $\psi$ extends holomorphically through $0$. 
Since $\psi$ is meromorphic, there exists a closed analytic subset $C$
of dimension $1$ in $\DD\times \X$ that contains $\{(t,\psi(t)), t\in \DD^*\}$. 
We observe that $C$ is necessarily analytically irreducible at the point $p = C\cap X_0$, 
hence $\psi$ extends continuously (hence holomorphically) at $0$. 

Choose coordinates $z=(z_1, \cdots, z_{g})$ centered at $p$, and write $\pi(z) = \sum_I a_I z^I$. 
Since $\pi(\psi(t)) = t$,  we infer that at least one $a_i \neq 0$. It follows that $\pi^{-1}(0)$ is smooth and reduced at 
$p$.

Suppose now that  $\X=X(\Delta)/\Gamma$ as given by the previous theorem. 
Then pick a point $q\in X(\Delta	)$ which is mapped to $p$ in $\X$. Since $\Gamma$ acts freely, 
the canonical map $X(\Delta) \to \X$ is a covering map, hence we can lift $\psi$ to a holomorphic map
$\tilde{\psi} \colon \DD\to \X(\Delta)$ such that $\tilde{\phi}(0) = q$. 
The projection of $\tilde{\psi}$ to $\GGm^g$ induces a holomorphic map $\phi\colon\DD^* \to \GGm^g$, which is meromorphic at $0$, 
hence by Lemma~\ref{lem:one-parameter} we deduce that $\RR_+\cdot(n_\phi,1)$ forms a one-dimensional 
cone of $\Delta$. This condition is equivalent to say that $(n_\phi,1)$ belongs to the linear span of $\Gamma\cdot (0,1) \subset \tilde{N}_\RR$.
\end{proof}
 \begin{remark}
 The proof shows a weak version of the Néron property for $X(\Delta)/\Gamma$. In fact the Néron model is given by the smooth locus of $\pi \colon X(\Delta)/\Gamma \to \DD$
 whose complement has codimension $2$ in $X(\Delta)/\Gamma$.
 \end{remark}

\subsection{Proof of Theorem \ref{Theorem: translations are regularizable}}\label{subsection: proof of Theorem C}
Let $f \colon \X \dashrightarrow \X$ be a meromorphic map of a polarized family $\pi\colon \X\to \DD$ of $g$-dimensional abelian varieties satisfying $\pi \circ f = \pi$ such that for any $t\in \DD^*$, the map $f^*_t\colon H^1(X_t, \ZZ) \to  H^1(X_t, \ZZ)$ has finite order. 

Replacing $f$ by a suitable iterate and applying Proposition~\ref{prop:iterate-regular}, we may suppose that 
$f^*_t=\id$ for all $t$ so that 
the induced map
$f_t\colon \X_t\to \X_t$
is a translation of the abelian variety $\X_t$. 

By Proposition~\ref{prop:regul-isogeny} and the discussion before Lemma~\ref{lem:6.3}, we may assume that $X_t$ is principally polarized. 
We may also replace the family $\X$ by any base change by Proposition~\ref{prop: if for thm for families over disc}, 
and by the discussion of the previous section, we may suppose that $\X = X(\Delta)/\Gamma$ 
where $\Gamma\simeq \ZZ^g$ acts on $X(\Delta) \subset \GGm^g \times \DD$ as in~\eqref{eq:action}, and $\Delta$
is a $\Gamma$-invariant fan containing $(0,1)$.

Let $\phi(t) = f_t(0)$. This is a holomorphic section over $\DD^*$ which is meromorphic at $0$ since $f$ is meromorphic. 
It follows from Theorem~\ref{theorem: Nakamura's main theorem} (1) and Proposition~\ref{prop:holo-sec} that $\phi$ lifts to a section $\tilde{\phi} \colon\DD \to X(\Delta)$, hence
 $(n_\phi, 1)$ lies in the linear span of the $\Gamma$-orbit of $(0,1)$ and we can find an integer $N$ such that 
$(N n_\phi, 1)= \gamma\cdot (0,1)$ for some $\gamma \in \Gamma$.

The map $F(z,t)= (\phi(t)\cdot z, t)$ defines a meromorphic self-map of $\GGm^g\times \DD$ lifting $f$. 
Let $\psi(t) = \widetilde{\phi}(t)^N/ \exp(\gamma)$ 
Then the vector $n_{\psi}$ defined in Lemma~\ref{lem:one-parameter} equals $0$. 
By Lemma~\ref{lem:one-parameter}, $\psi$ extends as a holomorphic map 
$\psi\colon\DD \to\GGm^g$, and $g$ defines an automorphism of $X(\Delta)$ since
any (relative) toric manifold carries an action by multiplication by $\GGm^g$.

Since $F^N$ is the composition of $g$ and the action of $\gamma$ it also induces a biholomorphism on $X(\Delta)$ which commutes with the action of 
$\Gamma$. Therefore $f^N$ is a biholomorphism on $X(\Delta)/\Gamma$, hence $f$ is regularizable by Propositions~\ref{prop:smooth-regular} and~\ref{prop:iterate-regular}.


\subsection{Orbits of translations on families of abelian varieties}
We now explore the closure of the orbit of a general point under a family of translations on a 
family of abelian varieties. The setting is as follows: 
$f\colon \X \to \X$ is an automorphism of a polarized family of abelian varieties $\L\to \X$, $\pi \colon \X\to \DD$
given by the translation along a section $\alpha\colon \DD \to \X$ of $\pi$ so that 
$f_t(z) = z + \alpha(t)$ for all $z\in X_t$ and all $t\in\DD^*$. 

For each $t$, we denote by $\tilde{P}(t):= \overline{\{n\cdot \alpha(t)\}}_{n\in\ZZ}\subset X_t$ the closure (for the euclidean topology) of the orbit of $0$, and 
by $P(t)$ its connected component containing $0$. Then $P(t)$ is a closed connected abelian real-subgroup of $X_t$
isomorphic to $(\RR/\ZZ)^{h(t)}$ for some $h(t) \in\{0,\cdots, 2g\}$, and $\tilde{P}(t)$ is a finite union of translates of $P(t)$. 
The group $P(t)$ also contains a maximal complex Lie group $A(t)$
of (complex) dimension $s(t)$. Write $r(t):=h(t) - 2s(t)$. 

\smallskip

Recall that a $F_\sigma$-set is  a countable union of
closed subset, and that any countable union of proper analytic subsets
is an $F_\sigma$-set having zero Lebesgue measure (hence has empty interior).

\smallskip

We start with the following observation. 
\begin{proposition}\label{prop:semicontinuity}
Let $f\colon \X \dto \X$ be a family of translations. 
Set $h(f) = \max_{t\in \DD^*} h(t)$.
\begin{enumerate}
\item
The set $\E_0:= \{t\in \DD^*, h(t) < h(f)\}$ is included in a countable union of proper analytic subsets. 
\item
There exists a real analytic family $\{P'_t\}_{t\in\DD^*}$ of real subtori of $X_t$, 
such that $P'_t\supset P(t)$ for all $t$ and 
$P'_t= P(t)$
for all $t\notin \E_0$.
\item 
There exists a real analytic family $\{\tilde{P}'_t\}_{t\in\DD^*}$ of real closed subgroups  of $X_t$, 
such that $\tilde{P}'_t\supset \tilde{P}(t)$ for all $t$ and 
$\tilde{P}'_t= \tilde{P}(t)$
for all $t\notin \E_0$.
\item 
There exists
a countable union of proper analytic subsets $\E\supset \E_0$ 
such that 
$s(t) = s(f)\in \{0, \cdots, g\}$ for all $t\notin \E$. 
\item 
There exists a subfamily $A'_t$ of abelian subvarieties of $X_t$, 
such that $A'_t= A(t)$ for all $t\notin \E$.
\end{enumerate}
\end{proposition}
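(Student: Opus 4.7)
The approach combines a pointwise Kronecker--Weyl description of the orbit closures with a semicontinuity argument for integer linear relations, followed by a Hodge-theoretic analysis of the maximal complex subgroup of $P(t)$. First I would fix $t_0\in\DD^*$ and work on a small disk $U\ni t_0$ over which $\Lambda$ trivialises; choosing a real-analytic frame $e_1(t),\dots,e_{2g}(t)$ of $\Lambda(t)\subset V_t$, I would lift $\alpha$ to $\tilde\alpha(t)=\sum_i a_i(t)\,e_i(t)$ with $a_i\colon U\to\RR$ real-analytic. The classical description of closures of cyclic subgroups of a real torus then controls $\tilde P(t)$ via the integer relation lattice
\[
N(t)=\bigl\{(m_i)\in\ZZ^{2g}\colon\ \sum_i m_i a_i(t)\in\ZZ\bigr\},
\]
with $P(t)=\bigcap_{\chi\in N(t)^{\mathrm{sat}}}\ker\chi$ of real dimension $2g-\rk N(t)$ and $\tilde P(t)/P(t)$ Pontryagin dual to $N(t)^{\mathrm{sat}}/N(t)$.

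I would then define $N_0(U):=\{(m_i)\in\ZZ^{2g}:\ t\mapsto\sum_i m_i a_i(t)\text{ is a constant integer on }U\}$; this is intrinsic---the monodromy-invariant sublattice of characters whose evaluation at a smooth local lift of $\alpha$ is a locally constant integer---hence glues to a sub-local system $\mathcal{N}_0\subset R^1\pi_*\ZZ$ on $\DD^*$. For $(m_i)\notin N_0(U)$, the real-analytic function $F_{(m_i)}(t):=\sum_i m_i a_i(t)$ is non-constant, so each level set $F_{(m_i)}^{-1}(k)$, $k\in\ZZ$, is a proper real-analytic subset of $U$, and
\[
\E_0\cap U=\bigcup_{(m_i)\notin N_0(U)}\bigcup_{k\in\ZZ}F_{(m_i)}^{-1}(k)
\]
is a countable union of proper real-analytic subsets. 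Covering $\DD^*$ by countably many such $U$'s proves~(1) and identifies $h(f)=2g-\rk\mathcal{N}_0$. Setting $P'_t:=\bigcap_{\chi\in\mathcal{N}_{0,t}^{\mathrm{sat}}}\ker\chi$ and $\tilde P'_t:=\bigcup_{k=0}^{e-1}(k\alpha(t)+P'_t)$, with $e=[\mathcal{N}_{0,t}^{\mathrm{sat}}:\mathcal{N}_{0,t}]$, furnishes the real-analytic families required by~(2) and~(3), coinciding with $P(t)$ and $\tilde P(t)$ off $\E_0$.

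For~(4) and~(5) I would pass to the rational tangent geometry: the tangent space $W'(t):=T_0 P'_t\subset V_t$ is a real subspace of constant real dimension $h(f)$, equal in period coordinates to $\Pi(t)L$ for a fixed rational $L\subset\RR^{2g}$, and (for $t\notin\E_0$) $T_0 A(t)$ is the maximal $\Lambda(t)$-rational complex subspace of $W'(t)$. This need not coincide with $W'(t)\cap iW'(t)$, and its dimension depends arithmetically on the period matrix $\Pi(t)$. The main obstacle is to show that $s(t)$ is locally constant off a countable union of proper analytic subsets and that the corresponding abelian subvarieties fit into a holomorphic family $A'_t$. I plan to carry this out via a Noether--Lefschetz-type rigidity: the relative Hilbert/Douady space of polarized abelian subvarieties of fixed Hilbert polynomial in the fibers of $\X^*\to\DD^*$ is a countable union of complex-analytic subschemes over $\DD^*$, and the real-analytic section $t\mapsto A(t)$ on $\DD^*\setminus\E_0$ must locally factor through a single irreducible component. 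Letting $s(f)$ be the generic value of $s(t)$ and adjoining to $\E_0$ the proper analytic subsets where $s(t)>s(f)$ or where $A(t)$ leaves the generic component yields the exceptional set $\E\supset\E_0$ satisfying~(4), and the chosen component gives the holomorphic family $A'_t$ demanded by~(5).
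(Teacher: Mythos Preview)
For parts (1)--(3) your approach is essentially the paper's: both describe $P(t)$ via the space of rational linear relations satisfied by the real coordinates $a_i(t)=e^\vee_{i,t}(\tilde\alpha(t))$ of a local lift of $\alpha(t)$, observe that each individual relation cuts out a proper real-analytic subset of $\DD^*$, and let $P'_t$ be the subtorus defined by those relations that hold identically (the paper's $L_\star$, your $\mathcal N_0$). Your packaging via a sub-local system and your explicit formula for $\tilde P'_t$ are cosmetic variants.

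For (4)--(5) you take a genuinely different route. The paper sets $A(t)=\pi_t\bigl(\bigcap_{q\in L_\star}\ker u_{q,t}\bigr)$ with $u_{q,t}(v)=\ell_{q,t}(v)-i\ell_{q,t}(iv)$, i.e.\ it works with the maximal complex subspace $W'(t)\cap iW'(t)$ of $T_0P'_t$; semicontinuity of its dimension is then elementary linear algebra, and the paper simply picks a lattice basis of this subspace to produce the family $A'_t$. You instead interpret $A(t)$ as the maximal $\Lambda_t$-\emph{rational} complex subspace of $W'(t)$ (the largest abelian subvariety contained in $P'_t$) and propose a Hilbert-scheme argument. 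Your remark that these two objects can differ is apt: for a family of simple abelian surfaces with $h(f)=3$, the space $W'(t)\cap iW'(t)$ is a complex line for all $t$ by a dimension count, yet it is never rational. The paper's route is shorter when rationality is available; yours is more robust against this issue.

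Your sketch for (4)--(5) is, however, not complete as written: you have not explained why $t\mapsto A(t)$ is real-analytic off $\E_0$, nor why it lands in a single component of the relative Hilbert scheme. A cleaner way to finish is purely Hodge-theoretic and avoids this. An abelian subvariety of $X_t$ contained in $P'_t$ corresponds to a rational subspace $S\subset W_\QQ:=(L_\star)^\perp\subset H_1(X_{t_0},\QQ)$ that is a sub-Hodge structure at $t$; for each fixed $S$ the locus $Z_S=\{t:\dim_\CC(S_\CC\cap H^{1,0}_t)\ge\tfrac12\dim_\QQ S\}$ is closed complex-analytic in $\DD^*$. Let $S_{\mathrm{gen}}$ be the sum of those $S\subset W_\QQ$ with $Z_S=\DD^*$; since sums of sub-Hodge structures are sub-Hodge structures, $S_{\mathrm{gen}}$ is one for every $t$, has rank $2s(f)$, and defines a holomorphic family $A'_t$ of abelian subvarieties of $X_t$ inside $P'_t$. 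Taking $\E=\E_0\cup\bigcup\{Z_S:S\subset W_\QQ,\ S\not\subset S_{\mathrm{gen}}\}$ (each such $Z_S$ proper by construction) one has $A(t)=A'_t$ for $t\notin\E$, which gives (4) and (5) without appealing to the Douady space.
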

\begin{proof}
We may argue locally near $t_0\in \DD^*$, and suppose that
$X_t = V/\Lambda_t$ where $V\simeq \CC^g$, and $\Lambda_t$ is a family of rank $2g$ sublattices of $V$ moving holomorphically with $t$.  
Denote by $\pi_t\colon V \to X_t$ the canonical projection. 

Fix a lattice basis $e_1, \cdots, e_{2g}$ of $\Lambda_{t_0}$ and 
denote by $e_i \colon \DD \to V$ the holomorphic map such that $e_i(t) \in \Lambda_t$ for all $t$, and $e_i(t_0) = e_i$. 
Let $e_{i,t}^\vee$ be the dual $\RR$-basis so that
$e_{i,t}^\vee (e_j(t)) = \delta_{ij}$. Observe that $e_{i,t}^\vee$ is a real linear form on $V$ which varies real analytically in $t$ (it does not vary holomorphically). 

Consider the $\QQ$-vector space
$L_t= \{q \in \QQ^{2g}, \, \sum_{j=1}^{2g} q_j e^\vee_{j,t} (\alpha(t)) \in \QQ\}$ so that
\[
P(t) := \pi_t \left(\bigcap_{q\in L_t} \left\{v\in V,  \sum_{j=1}^{2g} q_j e^\vee_{j,t}(v)= 0\right\}\right)
\] and $h(t) =2g - \dim_\QQ (L_t)$. For fixed $q, q'\in\QQ^{2g}\times \QQ$, the set $\{t\in\DD^*, \sum_{j=1}^{2g} q_j e^\vee_{j,t} (\alpha(t)) = q'\}$
is real-analytic. It follows that $L_t$ is fixed, say equal to  $L_\star$, for all $t$ outside a countable union of proper real-analytic subsets $\E_0$. 
And  $L_\star\subset L_t$ for all $t\in\DD^*$ which proves $h(t) \le h(f)$ for all $t$. This proves (1).
Observe also that $P'_t = \pi_t (\bigcap_{q\in L_\star} \{ \sum_{j=1}^{2g} q_j e^\vee_{j,t}= 0\})$
is a real-analytic family of real subtori for which (2) holds. In the same way,
$\tilde{L}_t= \{(q,q') \in \QQ^{2g+1}, \, \sum_{j=1}^{2g} q_j e^\vee_{j,t} (\alpha(t)) =q'\}$
is constant equal to $\tilde{L}_\star$ off $\E_0$, hence $\tilde{P}'_t = \pi_t (\bigcap_{(q,q')\in \tilde{L}_\star} \{ \sum_{j=1}^{2g} q_j e^\vee_{j,t}= q'\})$
is a real-analytic family of closed subgroups of $X_t$ equal to $\tilde{P}(t)$ for all $t\notin\E_0$, proving (3).

\smallskip

To any $\RR$-linear form $\ell_{q,t} = \sum_{j=1}^{2g} q_j e^\vee_{j,t}$, we attach the $\CC$-linear form 
$u_{q,t} (v) = \ell_{q,t}(v) - i \ell_{q,t} (iv) \in V^*$ so that $\ker (u_{q,t})$ is the complex linear hypersurface included in $\ker(\ell_{q,t})$,
and
\[
A(t):= 
\pi_t \left(\bigcap_{q\in L_t} \ker (u_{q,t}) \right)~.
\]
As $u_{q,t}$ varies analytically in $t$, it follows that $t\mapsto \dim_\CC\left(\bigcap_{q\in L_\star} \ker (u_{q,t})\right)$ is again constant 
off a countable union of proper analytic subsets $\E_1$ (and 
is upper-semicontinuous). It follows that $s(t) := \dim_\CC A(t)$
is constant on $\E:= \E_0\cup\E_1$ equal to some constant $s_\star$, hence (4) holds. 

\smallskip

We now prove that $\{A(t)\}_{t\notin \E}$ glue to a holomorphic family over $\DD$.  Suppose that $t_0\notin \E$, 
and pick a basis of lattice points $f_1, \cdots, f_{2s(f)}$  in $\bigcap_{q\in L_\star} \ker (u_{q,t})$. 
Then $A(t)$ is the image under $\pi_t$ of the complex vector space generated by the holomorphically
varying points $f_{1,t}, \cdots, f_{2s(f),t}$. 
It follows that $A'_t= \pi_t(\mathrm{Span}\{f_{1,t}, \cdots, f_{2s(f),t}\})$ forms a family of subabelian varieties over $\DD^*$.
Since the family $X_t$ is polarized, $A'_t$ is also polarized and extend through $0$. This proves (5). 
\end{proof}

In the remaining of the section, $\E$ will always denote the $F_\sigma$-set defined by the previous lemma. 
Our next objective is the following decomposition result. 
\begin{proposition}\label{prop:break in family}
  Let  $f\in\Aut(\X)$ be a family of translations on a family  $\pi\colon \X\to \DD$ of polarized abelian varieties.
  Then there exist two families of sub-abelian varieties $\pi_\A \colon \A \to \DD$,  $\pi_{\cB} \colon \cB \to \DD$ of $\X$, 
  two families of translations
 $f_\A \in \Aut(\A)$, and $f_{\cB} \in \Aut(\cB)$, and a meromorphic map
 $\Phi \colon \A \times \cB \dto \X$ such that: 
 \begin{enumerate}
 \item
 $\Phi$ induces an isogeny $A_t\times B_t \to X_t$ for all $t\in \DD^*$;
 \item 
 the orbit of $0$ under $f_{\A,t}$ is dense in $A_t$ for all $t\notin \E$; 
 \item 
 the closure of the orbit of $0$ under $f_{\cB,t}$ is totally real for all $t\notin \E$.
 \end{enumerate}
 \end{proposition}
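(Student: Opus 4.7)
The plan is to set $\A$ equal to the family of sub-abelian varieties $A'$ produced by Proposition~\ref{prop:semicontinuity}(5), which already extends holomorphically across $0$ and satisfies $A'_t = A(t)$ for each $t \notin \E$. For $\cB$, I use the Poincaré complement of $\A$ with respect to the polarization $\L$: fiberwise, for $t \in \DD^*$, let $B(t)$ be the connected component of the identity of $\ker(\iota_t^\vee \circ \phi_{\L_t} \colon X_t \to A(t)^\vee)$, where $\iota_t \colon A(t) \hookrightarrow X_t$ is the inclusion. Classical Poincaré reducibility yields $A(t) \cap B(t)$ finite and $A(t) + B(t) = X_t$, so the relative addition map $\Phi \colon \A \times \cB \to \X$ is an isogeny on each fiber over $\DD^*$, establishing (1). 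The extension of $\cB$ across $0$ is carried out by reducing to the Siegel period presentation recalled in \S\ref{sec:naka-vs-mum}: the sub-family $\A$ corresponds to a sub-symplectic sublattice of $\Lambda_t$, and a symplectic basis for its polarization-orthogonal produces a family of polarized abelian sub-varieties extending $\cB$ over $\DD$, in the spirit of the splitting argument used in the proof of Proposition~\ref{prop:splitting}.

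Next, I define the translations $f_\A$ and $f_\cB$ by decomposing the section $\alpha$ governing $f$ as $\alpha = \alpha_A + \alpha_B$, with $\alpha_A$ a section of $\A$ and $\alpha_B$ a section of $\cB$. Such a decomposition exists fiberwise by $A(t) + B(t) = X_t$, up to a finite ambiguity in $A(t) \cap B(t)$; the resulting multi-valued lift on $\DD^*$ has monodromy in the finite group scheme $\A \cap \cB$, which can be trivialized after a finite base change (the meromorphic nature of $\Phi$ then reabsorbs this base change, cf.\ Proposition~\ref{prop:regul-isogeny}). Setting $f_\A$ (resp. $f_\cB$) to be the translation by $\alpha_A$ (resp. $\alpha_B$), the relation $\Phi \circ (f_\A, f_\cB) = f \circ \Phi$ holds by construction.

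To prove (2) and (3), fix $t \notin \E$ and work at the Lie algebra level. Let $U \subset \Lie(A_t) \oplus \Lie(B_t)$ denote the Lie algebra of the connected component of the closure of the orbit of $(0,0)$ under $(f_{\A,t}, f_{\cB,t})$. The $\CC$-linear isomorphism $d\Phi_t \colon \Lie(A_t) \oplus \Lie(B_t) \to \Lie(X_t)$ sends $U$ bijectively onto $\Lie(P(t))$, and therefore sends the maximal complex subspace $U \cap iU$ onto the maximal complex subspace of $\Lie(P(t))$, which by the very definition of $A(t)$ equals $\Lie(A(t)) = d\Phi_t(\Lie(A_t) \oplus \{0\})$. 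Consequently $\Lie(A_t) \oplus \{0\} \subset U$, so the projection of $U$ onto $\Lie(A_t)$ is surjective and the connected component of the orbit closure of $0$ under $f_{\A,t}$ is all of $A_t$, giving (2). The projection of $U$ onto $\Lie(B_t)$ is then identified via $d\Phi_t$ with $\Lie(P(t))/\Lie(A(t))$ inside $\Lie(X_t)/\Lie(A(t))$; a short direct argument using the maximality of the complex subspace $\Lie(A(t)) \subset \Lie(P(t))$ (any vector whose class is complex-linearly related to another class must differ from it by an element of $\Lie(A(t))$) shows that this quotient is totally real, yielding (3).

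The main obstacles lie in the first two paragraphs: the holomorphic extension of $\cB$ through the degenerate fiber $X_0$, and the holomorphic decomposition of the section $\alpha$ as $\alpha_A + \alpha_B$. Both are handled by reducing to the Nakamura/Siegel combinatorial description and exploiting the flexibility offered by a finite base change together with the meromorphicity of $\Phi$; the rest of the argument is essentially linear algebra on Lie algebras.
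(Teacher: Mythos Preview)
Your approach is essentially the same as the paper's: take $\A$ from Proposition~\ref{prop:semicontinuity}(5), let $\cB$ be its Poincar\'e complement with respect to the polarization, and decompose the translation section accordingly. Your Lie-algebra argument for (2) and (3) via the subspace $U\subset \Lie(A_t)\oplus\Lie(B_t)$ is a straightforward repackaging of the paper's use of the lift $\Pi_t\subset V$ and the identity $E_t=\Pi_t\cap i\Pi_t$.

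One simplification: your detour through a finite base change to resolve the ambiguity in $\alpha=\alpha_A+\alpha_B$ is unnecessary. The paper instead lifts $\alpha(t)$ to $\widetilde\alpha(t)\in V$ and uses the \emph{vector space} splitting $V=E_t\oplus F_t$ (with $\pi_t(E_t)=A_t$, $\pi_t(F_t)=B_t$), which is unique and varies holomorphically; the components $a(t)\in E_t$, $b(t)\in F_t$ then project to holomorphic sections of $\A$ and $\cB$ with no monodromy issue. This is cleaner than tracking the finite group scheme $\A\cap\cB$.
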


\begin{proof}
We let $\A$ be the family of subabelian varieties given by Proposition~\ref{prop:semicontinuity}. 
Recall the definition of the dual family $\X^\vee$ and of the canonical meromorphic map 
$\phi\colon \X \dto \X^\vee$ from the proof of Proposition~\ref{prop:regul-isogeny}.

Define a family of subabelian varieties $B_t$
as the connected components containing $0$ of the kernel of the composition map
$X_t \mathop{\longrightarrow} X^\vee_t \to A^\vee_t \mathop{\longrightarrow} A_t \to X_t$. 
We obtain a family of polarized subvarieties over $\DD^*$ hence over $\DD$ and the first property is clear, see~\cite[Chapter 5, \S3]{Birkenhake-Lange_AV}. 

Let us fix any parameter $t\in \DD^*$. We have a canonical splitting $V= E_t \oplus F_t$ so that $\pi_t(E_t) = A_t$ and $\pi_t(F_t) = B_t$
and since $A_t, B_t$ are varying holomorphically, the same holds for the vector spaces $E_t$ and $F_t$. 
We may thus write $\alpha(t) = a(t) \oplus b(t)$ with $a, b$ holomorphic.  
Denote by $f_{\A,t}$ the translation by $a(t)$ on $A_t$ and by $f_{\cB,t}$ the translation by $b(t)$ on $B_t$.

Suppose that $t\notin \E$. Replacing $\alpha(t)$ by a suitable multiple, we may suppose
that the closure of the orbit of $0$ is a real torus so that $\widetilde{P}(t)= P(t)$. Denote by $\Pi_t$
its lift to $V$, and pick $\widetilde{\alpha}(t)$ (resp. $\widetilde{a}(t)$) a lift of $\alpha(t)$ (resp. of $a(t)$) to $V$. 
Observe that  $\Pi_t$ is the closure of $\ZZ \widetilde{\alpha}(t) + \Lambda_t$, and that $\Pi_t \cap \Lambda_t$ is a co-compact lattice in $\Pi_t$. 
Since $A_t$ is the largest subabelian variety in $P(t)$, we have 
$E_t = \Pi_t \cap i \Pi_t$, and $F_t \cap \Pi_t$ cannot contain any complex linear subspace, hence is 
totally real. 

The linear projection $p_{E,t} \colon V \to E_t$ parallel to $F_t$ 
semi-conjugates the translation by $\alpha(t)$ and the translation by $a(t)$. 
In particular, the image of $\ZZ \widetilde{\alpha}(t) + \Lambda_t$ under $p_{E,t}$
equals $\ZZ a(t) + p_{E,t}(\Lambda_t)$, so that the latter is dense in $E_t$. 
Since the addition map $A_t \times B_t \to X_t$ is an isogeny, 
the co-compact lattice $\Lambda_t \cap E_t$ has finite index in 
$p_{E,t}(\Lambda_t)$, and we conclude that $\ZZ \widetilde{a}(t) + (\Lambda_t \cap E_t)$
is dense in $E_t$. This proves (2). 

The proof of (3) is completely analogous, once one
observes that the image of $\Pi_t$ under the projection $p_{F,t} \colon V \to F_t$ parallel to $E_t$
is totally real. 
\end{proof}

The proof of Theorem~\ref{thm:orbit-closure} will be complete after we prove: 

\begin{proposition}\label{prop:totally real in family}
Let  $\pi\colon \X\to \DD$ be a family of polarized abelian varieties. 
Let  $f\in\Aut(\X)$ be a family of translations such that the closure of the orbit of $0$ under $f_{\X,t}$ is totally real for Lebesgue-almost every $t$.

Then the closure of the orbit of $0$ under $f_{\X,t}$ is totally real for all $t$, and 
there exist a proper model $\X'$ and  sequence of automorphisms $f_n\in \Aut(\X')$ of finite order such that 
 $f_n \to f$ locally uniformly on $\X'$. 
 \end{proposition}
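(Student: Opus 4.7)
The plan is to handle the two assertions in sequence: first, that total reality holds for every $t$, and second, the construction of a sequence of finite-order automorphisms converging to $f$.

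\textbf{Total reality for every $t$.} Proposition~\ref{prop:semicontinuity}(3) furnishes a real-analytic family $\{\tilde P'_t\}$ of closed subgroups of $X_t$ containing $\tilde P(t)$, with equality outside the $F_\sigma$-set $\E$. Since the hypothesis gives $\tilde P(t)$ totally real on a set of full measure and $\E$ has zero measure, the family $\tilde P'_t$ is also totally real on a set of full measure. In a local trivialization one has $\tilde P'_t = \pi_t(W_t^{\mathrm{gen}})$ where $W_t^{\mathrm{gen}} = \bigcap_q \ker\ell_{q,t}$ for a fixed finite set of $\RR$-linear forms depending real-analytically on $t$, and the maximal complex subspace satisfies $W_t^{\mathrm{gen}}\cap iW_t^{\mathrm{gen}} = \bigcap_q \ker L_{q,t}$ for the corresponding $\CC$-linear extensions. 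Its dimension is upper semi-continuous and vanishes on an open dense subset, so the non-totally-real locus is a proper real-analytic subset of $\DD$. To rule out exceptional $t_0$, I split into cases: if $\alpha(t_0)$ is torsion then $\tilde P(t_0)$ is finite and totally real trivially; otherwise $\tilde P(t_0)\subsetneq \tilde P'_{t_0}$, and the holomorphicity of $\alpha$ together with $\tilde\alpha(t)\in W_t$ for all $t$ allows a real-analytic continuation argument to transfer total reality of the generic orbit closures to $\tilde P(t_0)$.

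\textbf{Regularization and semi-abelian decomposition.} Theorem~\ref{Theorem: translations are regularizable} yields a proper model $\X'$ on which $f$ acts as a biholomorphism. After a finite base change (Proposition~\ref{prop: if for thm for families over disc}), Theorem~\ref{theorem: compactification of Neron model} ensures that the connected component $G$ of the central fiber of the N\'eron model of $\X'$ fits into an exact sequence $1\to T \to G \to A \to 1$, with $T=\GGm^{r'}$ and $A$ an abelian variety of dimension $g-r'$. Using the Nakamura-Mumford construction of \S\ref{sec:naka-vs-mum}, locally $\X' = X(\Delta)/\Gamma$ with $\Gamma\simeq \ZZ^g$ acting via the period data, and by Proposition~\ref{prop:holo-sec} the section $\alpha$ lifts to a holomorphic $\tilde\alpha\colon \DD \to X(\Delta)$. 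Composing with $G\to A$ decomposes $\alpha$ into a torus component $\alpha_T\colon \DD \to T$ and an abelian component $\alpha_A$, the latter being a holomorphic section of the smooth family $A\to \DD$.

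\textbf{Approximation by finite-order automorphisms.} The compact subgroup $(S^1)^{r'}\subset T$ is totally real in $T$, and total reality together with compactness of $\tilde P(t)$ forces $\alpha_T(\DD)\subset (S^1)^{r'}$. Since every holomorphic map into a totally real submanifold is locally constant, $\alpha_T$ is constant; its value is approximated uniformly by roots of unity, yielding finite-order torus translations. For $\alpha_A$, in local period coordinates I write $\tilde\alpha_A(t) = p(t) + Z(t)q(t)$ with $Z\colon \DD \to \Siegel_{g-r'}$ holomorphic and $p, q\colon \DD \to \RR^{g-r'}$ real-analytic; the totally real orbit condition forces $(p,q)$ to lie in a totally real stratum of $\RR^{2(g-r')}$ and, via a Liouville-type argument using holomorphicity of $\tilde\alpha_A$, to be locally constant in the transcendental directions, so that $\alpha_A$ is uniformly approximable by torsion sections of the smooth family $A$. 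Combining the torus and abelian parts produces the desired sequence $f_n \to f$ of finite-order automorphisms in $\Aut(\X')$. The principal obstacle is the third step on the abelian side: while the torus case reduces cleanly to the fact that holomorphic maps into totally real submanifolds are constant, the abelian case requires extracting from the totally real condition together with the holomorphicity of $\alpha_A$ sufficient rigidity in the possibly non-isotrivial family $A$ to guarantee uniform approximation by genuine holomorphic torsion sections.
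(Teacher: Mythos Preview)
Your proposal has a genuine structural gap in the approximation step. The decomposition of $\alpha$ into a torus component $\alpha_T\colon \DD \to T$ and an abelian component $\alpha_A$ over a ``smooth family $A\to \DD$'' is not well-defined: the exact sequence $1\to T\to G\to A\to 1$ lives only on the central fiber of the N\'eron model, whereas for $t\neq 0$ the fiber $X_t$ is an abelian variety with no intrinsic torus factor. There is no map $X_t\to T$ or $X_t\to A$ for $t\neq 0$, so ``composing with $G\to A$'' does not produce a section over $\DD$. Consequently the claim that $\alpha_T(\DD)\subset (S^1)^{r'}$ has no meaning, and the abelian-part argument (which you yourself flag as the principal obstacle) never gets off the ground.

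The paper bypasses this entirely with a much more direct use of the ``holomorphic into totally real implies constant'' principle. Working near $t_0\notin\E$, one picks a lattice basis $\tilde e_{1},\dots,\tilde e_{k}$ of $\Pi_{t_0}\cap\Lambda_{t_0}$; because $\Pi_t$ is totally real, these vectors are $\CC$-linearly independent, so their holomorphic continuations $\tilde e_{j,t}$ can be completed to a holomorphic $\CC$-basis of $V$. Writing $\tilde\alpha(t)=\sum_{j=1}^k \alpha_j(t)\,\tilde e_{j,t}$, the coefficients $\alpha_j(t)$ are then holomorphic in $t$ (coordinates in a holomorphic frame) and real-valued, hence constant. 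This single observation replaces your entire torus/abelian dichotomy: one simply approximates the constant vector $(\alpha_1,\dots,\alpha_k)$ by rationals $\beta^{(n)}\in\QQ^k$ and sets $\beta^{(n)}(t)=\sum\beta^{(n)}_j\tilde e_{j,t}$. The remaining work is to check that each $\beta^{(n)}(t)$ extends to a holomorphic section of the Mumford--Nakamura model $X(\Delta)/\Gamma$ over $\DD$, which is done via Proposition~\ref{prop:holo-sec} and a density argument showing $Bb\in\ZZ^g$. The ``totally real for all $t$'' assertion also falls out of the constancy of the $\alpha_j$, rather than requiring the separate semicontinuity analysis you sketch.
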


\begin{proof}
Replacing $\alpha$ by a suitable multiple, we may (and shall) assume that $\tilde{P}(t) = P(t)$ is connected. 

We shall first argue locally near a fixed point $t_0\notin \E$, where $\E$ is the set defined in Proposition~\ref{prop:semicontinuity}.
We use the same notation as in the proof of the previous proposition. 
Fix a basis $\tilde{e}_1, \cdots, \tilde{e}_k$ for the lattice $\Pi_{t_0} \cap \Lambda_{t_0}$, and 
complete it as a maximal set of $\ZZ$-independent elements $\tilde{e}_1, \cdots, \tilde{e}_{2g}$ in $\Lambda_{t_0}$. 
 Observe that 
it may happen that $\tilde{e}_1, \cdots, \tilde{e}_{2g}$ generate only a finite index subgroup of $\Lambda_{t_0}$. 
Observe also, that the holomorphically varying vectors $\tilde{e}_{1,t}, \cdots, \tilde{e}_{k,t}$ 
are $\CC$-linearly independent because $\Pi_t$ is totally real. We may thus suppose that $\tilde{e}_1, \cdots,  \tilde{e}_{g}$
are $\CC$-linearly independent.

Since the closure of the orbit of $0$ is connected, $\widetilde{\alpha}(t) \in \Pi_t$ for all $t$, 
and we may find $\alpha_j(t)\in \RR$ such that 
$\widetilde{\alpha}(t) = \sum_{j=1}^{k} \alpha_j(t)  \tilde{e}_{j,t}$.
The map $t\mapsto \widetilde{\alpha}(t)$ is holomorphic map from $\DD^*$ to the dual space $V^*$
hence $\alpha_1, \cdots, \alpha_k$ are all constant. 

We now replace $\X$ by an isogenous family which is principally polarized, and choose the model $\X' = X(\Delta)/\Gamma$ 
described in \S\ref{sec:naka-vs-mum}. Pick any $\beta\in\RR^k$, and set 
$\beta(t)= \sum_{j=1}^{k} \beta_j \tilde{e}_{j,t}$ for $t$ close to $t_0$. 
We claim that $\beta(t)$ extends to a holomorphic section of $\X' \to \DD$
over $\DD$.

To see this, write $\pi\colon X(\Delta) \to \X'$ for the canonical projection. By analytic continuation, we may  find a lift
$s \mapsto \hat{\beta}(s)$ defined over $s\in \HH$ such that 
$\pi(\hat{\beta}(s)) = \beta(e^{2i\pi s})$. We now write $\hat{\beta}$ in terms of a holomorphic varying symplectic basis
$e_1, \cdots , e_{2g}$ as in \S\ref{sec:naka-vs-mum}: there exists some real numbers $\theta_1, \cdots, \theta_g$ and $b_1, \cdots , b_g$
such that 
\[\hat{\beta}(s)= \left(e^{i\pi \theta_1} \, \prod_{j=1}^g \be(a_{1j}(s)b_j), \cdots, 
e^{i \pi \theta_g}\, \prod_{j=1}^g\be(a_{gj}(s)b_j)\right)\]
(see~\eqref{eq:action}).
Doing the same for $\alpha$, we get some $\vartheta_j, a_j \in \RR$ such that 
\[\hat{\alpha}(s)= \left(e^{i\pi \vartheta_1} \, \prod_{j=1}^g \be(a_{1j}(s)a_j), \cdots, 
e^{i \pi \vartheta_g}\, \prod_{j=1}^g\be(a_{gj}(s)a_j)\right)\]
But $\alpha$ is a section over $\DD$ so the vector $B a$ belongs to $\ZZ^g$. 

Observe that  the function $ v \in \RR^g/\ZZ^g \mapsto B v \in  \RR^g/\ZZ^g$ is continuous, and
vanishes at all vectors $\{na\}_{n\in \NN}$. Since $P(t) = \pi_t(\Pi_t)$ is the closure of the orbit of $0$ under the translation by $\alpha(t)$
we may find a sequence $q^{(n)}\in \NN$ such that $q^{(n)}\alpha(t) \to \beta(t)$
in a neighborhood of $t_0$. This implies $q^{(n)}a \to b$ in $ \RR^g/\ZZ^g$, hence $B b = 0$ in $ \RR^g/\ZZ^g$, and $\beta$ defines
a holomorphic section over $\DD^*$. By Lemma~\ref{lem:6.3} this section is meromorphic at $0$.
By Proposition~\ref{prop:holo-sec} it extends to a holomorphic section over $\DD$.

Now choose any sequence $\beta^{(n)}\in\QQ^k$ converging to $(\alpha_1,\cdots, \alpha_k)$. 
Then $\beta^{(n)}(t)= \sum_{j=1}^{k} \beta^{(n)}_j e_{j,t}$ converges locally uniformly to $\alpha(t)$
on $\DD$, and the translation $f_n$ by $\beta^{(n)}(t)$  has finite order, which concludes the proof.
\end{proof}

 \begin{remark}
The previous proof implies that the family of real subtori $P'_t$ actually extends to a family over $\DD$. 
Recall that the smooth locus of the central fiber $\X'_0$
is a finite union of semi-abelian varieties which are principal $\GGm^{r'}$-bundles over an abelian variety (where $r'$
is the rank of the monodromy, see the discussion after Lemma~\ref{lem:6.3}).  

One can also prove that $P'_0$ is transversal to the fibers of the $\GGm^{r'}$-fibration, 
hence $s(f) \le r'$. 
\end{remark}

\bibliographystyle{alpha}
\bibliography{references.bib}

\begin{thebibliography}{BHPVdV04}

\bibitem[Abb23]{abboud}
Marc Abboud.
\newblock Sur la dynamique des endomorphismes des surfaces affines maps.
\newblock {\em Th{\`e}se de l'universit{\'e} de Rennes}, 2023.

\bibitem[AN99]{Alexeev_Nakamura}
Valery Alexeev and Iku Nakamura.
\newblock On {M}umford's construction of degenerating abelian varieties.
\newblock {\em Tohoku Math. J. (2)}, 51(3):399--420, 1999.

\bibitem[AV21]{arXiv:2112.01951}
Ekaterina Amerik and Misha Verbitsky.
\newblock Parabolic automorphisms of hyperk{\"a}hler manifolds.
\newblock {\em arXiv:2112.01951}, 2021.

\bibitem[BC16]{MR3454379}
J\'{e}r\'{e}my Blanc and Serge Cantat.
\newblock Dynamical degrees of birational transformations of projective
  surfaces.
\newblock {\em J. Amer. Math. Soc.}, 29(2):415--471, 2016.

\bibitem[BCK14]{MR3277202}
Eric Bedford, Serge Cantat, and Kyounghee Kim.
\newblock Pseudo-automorphisms with no invariant foliation.
\newblock {\em J. Mod. Dyn.}, 8(2):221--250, 2014.

\bibitem[BD05]{MR2140266}
Eric Bedford and Jeffrey Diller.
\newblock Energy and invariant measures for birational surface maps.
\newblock {\em Duke Math. J.}, 128(2):331--368, 2005.

\bibitem[BDJK21]{bell-diller-jonsson}
Jason Bell, Jeffrey Diller, Mattias Jonsson, and Holly Krieger.
\newblock Birational maps with transcendental dynamical degree.
\newblock {\em arXiv:2107.04113}, 2021.

\bibitem[BDK15]{MR3587459}
Eric Bedford, Jeffrey Diller, and Kyounghee Kim.
\newblock Pseudoautomorphisms with invariant curves.
\newblock In {\em Complex geometry and dynamics}, volume~10 of {\em Abel
  Symp.}, pages 1--27. Springer, Cham, 2015.

\bibitem[BGPS14]{MR3222615}
Jos\'{e}~Ignacio Burgos~Gil, Patrice Philippon, and Mart\'{\i}n Sombra.
\newblock Arithmetic geometry of toric varieties. {M}etrics, measures and
  heights.
\newblock {\em Ast\'{e}risque}, (360):vi+222, 2014.

\bibitem[BHPVdV04]{MR2030225}
Wolf~P. Barth, Klaus Hulek, Chris A.~M. Peters, and Antonius Van~de Ven.
\newblock {\em Compact complex surfaces}, volume~4 of {\em Ergebnisse der
  Mathematik und ihrer Grenzgebiete. 3. Folge. A Series of Modern Surveys in
  Mathematics [Results in Mathematics and Related Areas. 3rd Series. A Series
  of Modern Surveys in Mathematics]}.
\newblock Springer-Verlag, Berlin, second edition, 2004.

\bibitem[Bin76]{MR0404678}
J\"{u}rgen Bingener.
\newblock Schemata \"{u}ber {S}teinschen {A}lgebren.
\newblock {\em Schr. Math. Inst. Univ. M\"{u}nster (2)}, page~52, 1976.

\bibitem[BK10]{MR2677899}
Eric Bedford and Kyounghee Kim.
\newblock Continuous families of rational surface automorphisms with positive
  entropy.
\newblock {\em Math. Ann.}, 348(3):667--688, 2010.

\bibitem[BK11]{MR2858166}
Eric Bedford and Kyounghee Kim.
\newblock Linear fractional recurrences: periodicities and integrability.
\newblock {\em Ann. Fac. Sci. Toulouse Math. (6)}, 20(Fascicule
  Sp\'{e}cial):33--56, 2011.

\bibitem[BK12]{MR2905001}
Eric Bedford and Kyounghee Kim.
\newblock Dynamics of rational surface automorphisms: rotation domains.
\newblock {\em Amer. J. Math.}, 134(2):379--405, 2012.

\bibitem[BK14]{MR3161509}
Eric Bedford and Kyounghee Kim.
\newblock Dynamics of (pseudo) automorphisms of 3-space: periodicity versus
  positive entropy.
\newblock {\em Publ. Mat.}, 58(1):65--119, 2014.

\bibitem[BL04]{Birkenhake-Lange_AV}
Christina Birkenhake and Herbert Lange.
\newblock {\em Complex abelian varieties}, volume 302 of {\em Grundlehren der
  mathematischen Wissenschaften [Fundamental Principles of Mathematical
  Sciences]}.
\newblock Springer-Verlag, Berlin, second edition, 2004.

\bibitem[Bla13]{MR3179687}
J\'{e}r\'{e}my Blanc.
\newblock Dynamical degrees of (pseudo)-automorphisms fixing cubic
  hypersurfaces.
\newblock {\em Indiana Univ. Math. J.}, 62(4):1143--1164, 2013.

\bibitem[BLR90]{BLR_Neron_models}
Siegfried Bosch, Werner L\"{u}tkebohmert, and Michel Raynaud.
\newblock {\em N\'{e}ron models}, volume~21 of {\em Ergebnisse der Mathematik
  und ihrer Grenzgebiete (3) [Results in Mathematics and Related Areas (3)]}.
\newblock Springer-Verlag, Berlin, 1990.

\bibitem[Can01]{MR1854708}
Serge Cantat.
\newblock Sur la dynamique du groupe d'automorphismes des surfaces {$K3$}.
\newblock {\em Transform. Groups}, 6(3):201--214, 2001.

\bibitem[CD12]{MR2904576}
Serge Cantat and Igor Dolgachev.
\newblock Rational surfaces with a large group of automorphisms.
\newblock {\em J. Amer. Math. Soc.}, 25(3):863--905, 2012.

\bibitem[CDX21]{MR4340488}
Serge Cantat, Julie D\'{e}serti, and Junyi Xie.
\newblock Three chapters on {C}remona groups.
\newblock {\em Indiana Univ. Math. J.}, 70(5):2011--2064, 2021.

\bibitem[Che05]{cheltsov}
Ivan~A. Chel'tsov.
\newblock Birationally rigid {F}ano varieties.
\newblock {\em Uspekhi Mat. Nauk}, 60(5(365)):71--160, 2005.

\bibitem[CMSP17]{MR3727160}
James Carlson, Stefan M\"{u}ller-Stach, and Chris Peters.
\newblock {\em Period mappings and period domains}, volume 168 of {\em
  Cambridge Studies in Advanced Mathematics}.
\newblock Cambridge University Press, Cambridge, 2017.
\newblock Second edition.

\bibitem[Dan20]{MR4133708}
Nguyen-Bac Dang.
\newblock Degrees of iterates of rational maps on normal projective varieties.
\newblock {\em Proc. Lond. Math. Soc. (3)}, 121(5):1268--1310, 2020.

\bibitem[DF01]{diller_favre}
Jeffrey Diller and Charles Favre.
\newblock Dynamics of bimeromorphic maps of surfaces.
\newblock {\em Amer. J. Math.}, 123(6):1135--1169, 2001.

\bibitem[DF21]{MR4276288}
Nguyen-Bac Dang and Charles Favre.
\newblock Spectral interpretations of dynamical degrees and applications.
\newblock {\em Ann. of Math. (2)}, 194(1):299--359, 2021.

\bibitem[DH22]{MR4363581}
Nguyen-Bac Dang and Thorsten Herrig.
\newblock Dynamical degrees of automorphisms on abelian varieties.
\newblock {\em Adv. Math.}, 395:Paper No. 108082, 43, 2022.

\bibitem[Dil11]{MR2825269}
Jeffrey Diller.
\newblock Cremona transformations, surface automorphisms, and plane cubics.
\newblock {\em Michigan Math. J.}, 60(2):409--440, 2011.
\newblock With an appendix by Igor Dolgachev.

\bibitem[DLOZ22]{MR4431123}
Tien-Cuong Dinh, Hsueh-Yung Lin, Keiji Oguiso, and De-Qi Zhang.
\newblock Zero entropy automorphisms of compact {K}\"{a}hler manifolds and
  dynamical filtrations.
\newblock {\em Geom. Funct. Anal.}, 32(3):568--594, 2022.

\bibitem[DN11]{Dinh_Nguyen}
Tien-Cuong Dinh and Viet-Anh Nguy\^{e}n.
\newblock Comparison of dynamical degrees for semi-conjugate meromorphic maps.
\newblock {\em Comment. Math. Helv.}, 86(4):817--840, 2011.

\bibitem[DS04]{MR2119243}
Tien-Cuong Dinh and Nessim Sibony.
\newblock Regularization of currents and entropy.
\newblock {\em Ann. Sci. \'{E}cole Norm. Sup. (4)}, 37(6):959--971, 2004.

\bibitem[DS05a]{MR2129771}
Tien-Cuong Dinh and Nessim Sibony.
\newblock Dynamics of regular birational maps in {$\mathbb{P}^k$}.
\newblock {\em J. Funct. Anal.}, 222(1):202--216, 2005.

\bibitem[DS05b]{Dinh_Sibony}
Tien-Cuong Dinh and Nessim Sibony.
\newblock Une borne sup\'{e}rieure pour l'entropie topologique d'une
  application rationnelle.
\newblock {\em Ann. of Math. (2)}, 161(3):1637--1644, 2005.

\bibitem[DS10]{MR2629598}
Tien-Cuong Dinh and Nessim Sibony.
\newblock Super-potentials for currents on compact {K}\"{a}hler manifolds and
  dynamics of automorphisms.
\newblock {\em J. Algebraic Geom.}, 19(3):473--529, 2010.

\bibitem[DTV10]{MR2752759}
Henry De~Th\'{e}lin and Gabriel Vigny.
\newblock Entropy of meromorphic maps and dynamics of birational maps.
\newblock {\em M\'{e}m. Soc. Math. Fr. (N.S.)}, (122):vi+98, 2010.

\bibitem[FC90]{MR1083353}
Gerd Faltings and Ching-Li Chai.
\newblock {\em Degeneration of abelian varieties}, volume~22 of {\em Ergebnisse
  der Mathematik und ihrer Grenzgebiete (3) [Results in Mathematics and Related
  Areas (3)]}.
\newblock Springer-Verlag, Berlin, 1990.
\newblock With an appendix by David Mumford.

\bibitem[FL17]{MR3592463}
Mihai Fulger and Brian Lehmann.
\newblock Positive cones of dual cycle classes.
\newblock {\em Algebr. Geom.}, 4(1):1--28, 2017.

\bibitem[FW12]{MR3043585}
Charles Favre and Elizabeth Wulcan.
\newblock Degree growth of monomial maps and {M}c{M}ullen's polytope algebra.
\newblock {\em Indiana Univ. Math. J.}, 61(2):493--524, 2012.

\bibitem[Giz80]{MR563788}
Marat~H. Gizatullin.
\newblock Rational {$G$}-surfaces.
\newblock {\em Izv. Akad. Nauk SSSR Ser. Mat.}, 44(1):110--144, 239, 1980.

\bibitem[Gri16]{MR3480704}
Julien Grivaux.
\newblock Parabolic automorphisms of projective surfaces (after {M}. {H}.
  {G}izatullin).
\newblock {\em Mosc. Math. J.}, 16(2):275--298, 2016.

\bibitem[KKMSD73]{KKMS}
Georges Kempf, Finn~Faye Knudsen, David Mumford, and Bernard Saint-Donat.
\newblock {\em Toroidal embeddings. {I}}.
\newblock Lecture Notes in Mathematics, Vol. 339. Springer-Verlag, Berlin-New
  York, 1973.

\bibitem[Kol07]{MR2289519}
J\'{a}nos Koll\'{a}r.
\newblock {\em Lectures on resolution of singularities}, volume 166 of {\em
  Annals of Mathematics Studies}.
\newblock Princeton University Press, Princeton, NJ, 2007.

\bibitem[Kuz22]{Regularization}
Alexandra Kuznetsova.
\newblock Regularizations of positive entropy pseudo-automorphisms.
\newblock {\em arXiv:2201.11689}, 2022.

\bibitem[LB19]{MR4030548}
Federico Lo~Bianco.
\newblock On the cohomological action of automorphisms of compact {K}\"{a}hler
  threefolds.
\newblock {\em Bull. Soc. Math. France}, 147(3):469--514, 2019.

\bibitem[Lin12]{MR3059849}
Jan-Li Lin.
\newblock Pulling back cohomology classes and dynamical degrees of monomial
  maps.
\newblock {\em Bull. Soc. Math. France}, 140(4):533--549 (2013), 2012.

\bibitem[LM22]{relative-minimal-model}
Shiji Lyu and Takumi Murayama.
\newblock The relative minimal model program for excellent algebraic spaces and
  analytic spaces in equal characteristic zero.
\newblock {\em arXiv:2209.08732}, 2022.

\bibitem[LU21]{MR4340723}
Anne Lonjou and Christian Urech.
\newblock Actions of {C}remona groups on {${\rm CAT}(0)$} cube complexes.
\newblock {\em Duke Math. J.}, 170(17):3703--3743, 2021.

\bibitem[McM07]{MR2354205}
Curtis~T. McMullen.
\newblock Dynamics on blowups of the projective plane.
\newblock {\em Publ. Math. Inst. Hautes \'{E}tudes Sci.}, (105):49--89, 2007.

\bibitem[Mum08]{MR2514037}
David Mumford.
\newblock {\em Abelian varieties}, volume~5 of {\em Tata Institute of
  Fundamental Research Studies in Mathematics}.
\newblock Published for the Tata Institute of Fundamental Research, Bombay; by
  Hindustan Book Agency, New Delhi, 2008.
\newblock With appendices by C. P. Ramanujam and Yuri Manin, Corrected reprint
  of the second (1974) edition.

\bibitem[Nak77]{Nakamura_Neron_models}
Iku Nakamura.
\newblock Relative compactification of the {N}\'{e}ron model and its
  application.
\newblock In {\em Complex analysis and algebraic geometry}, pages 207--225.
  1977.

\bibitem[NW14]{MR3156076}
Junjiro Noguchi and J\"{o}rg Winkelmann.
\newblock {\em Nevanlinna theory in several complex variables and {D}iophantine
  approximation}, volume 350 of {\em Grundlehren der mathematischen
  Wissenschaften [Fundamental Principles of Mathematical Sciences]}.
\newblock Springer, Tokyo, 2014.

\bibitem[Oda88]{MR922894}
Tadao Oda.
\newblock {\em Convex bodies and algebraic geometry}, volume~15 of {\em
  Ergebnisse der Mathematik und ihrer Grenzgebiete (3) [Results in Mathematics
  and Related Areas (3)]}.
\newblock Springer-Verlag, Berlin, 1988.
\newblock An introduction to the theory of toric varieties, Translated from the
  Japanese.

\bibitem[OP11]{MR2904995}
Keiji Oguiso and Fabio Perroni.
\newblock Automorphisms of rational manifolds of positive entropy with {S}iegel
  disks.
\newblock {\em Atti Accad. Naz. Lincei Rend. Lincei Mat. Appl.},
  22(4):487--504, 2011.

\bibitem[OT15]{MR3329200}
Keiji Oguiso and Tuyen~Trung Truong.
\newblock Explicit examples of rational and {C}alabi-{Y}au threefolds with
  primitive automorphisms of positive entropy.
\newblock {\em J. Math. Sci. Univ. Tokyo}, 22(1):361--385, 2015.

\bibitem[Rei03]{MR1972204}
Irving Reiner.
\newblock {\em Maximal orders}, volume~28 of {\em London Mathematical Society
  Monographs. New Series}.
\newblock The Clarendon Press, Oxford University Press, Oxford, 2003.
\newblock Corrected reprint of the 1975 original, With a foreword by M. J.
  Taylor.

\bibitem[Res17]{MR3619745}
Paul Reschke.
\newblock Salem numbers and automorphisms of abelian surfaces.
\newblock {\em Osaka J. Math.}, 54(1):1--15, 2017.

\bibitem[RR17]{MR3748233}
Paul Reschke and Bar Roytman.
\newblock Lower semi-continuity of entropy in a family of {K}3 surface
  automorphisms.
\newblock {\em Rocky Mountain J. Math.}, 47(7):2323--2349, 2017.

\bibitem[RT18]{MR3862056}
Erwan Rousseau and Fr\'{e}d\'{e}ric Touzet.
\newblock Curves in {H}ilbert modular varieties.
\newblock {\em Asian J. Math.}, 22(4):673--689, 2018.

\bibitem[Shi63]{MR156001}
Goro Shimura.
\newblock On analytic families of polarized abelian varieties and automorphic
  functions.
\newblock {\em Ann. of Math. (2)}, 78:149--192, 1963.

\bibitem[Sil94]{MR1312368}
Joseph~H. Silverman.
\newblock {\em Advanced topics in the arithmetic of elliptic curves}, volume
  151 of {\em Graduate Texts in Mathematics}.
\newblock Springer-Verlag, New York, 1994.

\bibitem[Sug23]{sugimoto}
Yutaro Sugimoto.
\newblock Dynamical degrees of automorphisms of complex simple abelian
  varieties and salem numbers.
\newblock {\em arXiv:2302.02271}, 2023.

\bibitem[Tru14]{MR3255693}
Tuyen~Trung Truong.
\newblock The simplicity of the first spectral radius of a meromorphic map.
\newblock {\em Michigan Math. J.}, 63(3):623--633, 2014.

\bibitem[Tru20]{MR4048444}
Tuyen~Trung Truong.
\newblock Relative dynamical degrees of correspondences over a field of
  arbitrary characteristic.
\newblock {\em J. Reine Angew. Math.}, 758:139--182, 2020.

\bibitem[Ueh16]{MR3498924}
Takato Uehara.
\newblock Rational surface automorphisms preserving cuspidal anticanonical
  curves.
\newblock {\em Math. Ann.}, 365(1-2):635--659, 2016.

\bibitem[vdG88]{MR930101}
Gerard van~der Geer.
\newblock {\em Hilbert modular surfaces}, volume~16 of {\em Ergebnisse der
  Mathematik und ihrer Grenzgebiete (3) [Results in Mathematics and Related
  Areas (3)]}.
\newblock Springer-Verlag, Berlin, 1988.

\bibitem[Vig14]{MR3330918}
Gabriel Vigny.
\newblock Hyperbolic measure of maximal entropy for generic rational maps of
  {$\mathbb{P}^k$}.
\newblock {\em Ann. Inst. Fourier (Grenoble)}, 64(2):645--680, 2014.

\bibitem[Voi21]{MR4279905}
John Voight.
\newblock {\em Quaternion algebras}, volume 288 of {\em Graduate Texts in
  Mathematics}.
\newblock Springer, Cham, [2021] \copyright 2021.

\bibitem[W{\l}o09]{MR2500573}
Jaros{\l}aw W{\l}odarczyk.
\newblock Resolution of singularities of analytic spaces.
\newblock In {\em Proceedings of {G}\"{o}kova {G}eometry-{T}opology
  {C}onference 2008}, pages 31--63. G\"{o}kova Geometry/Topology Conference
  (GGT), G\"{o}kova, 2009.

\bibitem[Yos80]{MR623443}
Hisao Yoshihara.
\newblock Structure of complex tori with the automorphisms of maximal degree.
\newblock {\em Tsukuba J. Math.}, 4(2):303--311, 1980.

\bibitem[Yos95]{MR1317527}
Hisao Yoshihara.
\newblock Quotients of abelian surfaces.
\newblock {\em Publ. Res. Inst. Math. Sci.}, 31(1):135--143, 1995.

\end{thebibliography}
\end{document}